\newtheorem{theorem}{Theorem}[section]
\newtheorem{lemma}[theorem]{Lemma}
\newtheorem{proposition}[theorem]{Proposition}
\theoremstyle{definition}
\theoremstyle{remark}
\numberwithin{equation}{section}
\begin{document}

\title{\text\bf Spatiotemporal pattern formations in a two-layer coupled reaction-diffusion Lengyel-Epstein system}
\author{Qidong Wu, Fengqi Yi\footnote{Corresponding Author (F. Yi). Email: yi@dlut.edu.cn}\footnote{The authors were partially by National Key R\;$\&$\;D Program of China (2023YFA1009200), the National Natural Science Foundation of China (11971088, 12371160, 12271144) and the Fundamental Research Funds for the Central Universities (DUT18RC(3)070, DUT23LAB304). } \\
\\
{\small School of Mathematical Sciences, Dalian University of Technology, Dalian,\hfill{\ }}\\
{\small Liaoning Province, 116024, China.\hfill{\ }}
  \\
}

\date{}
\maketitle

\newcommand{\Om}{\Omega}
\newcommand{\cOm}{\overline{\Omega}}
\newcommand{\ep}{\epsilon}
\newcommand{\la}{\lambda}
\newcommand{\ola}{\overline{\la}}
\newcommand{\ula}{\underline{\la}}
\newcommand{\lam}{\lambda^M}
\newcommand{\omu}{\overline{\mu}}
\newcommand{\umu}{\underline{\mu}}
\newcommand{\mum}{\mu^M}
\newcommand{\R}{{\mathbf R}}
\newcommand{\C}{{\mathbf C}}
\newcommand{\Rp}{{\mathbf R}^+}
\newcommand{\rn}{{\mathbf R}^n}
\newcommand{\Po}{\partial \Omega}
\newcommand{\po}{\partial \Omega}
\newcommand{\pa}{\partial}
\newcommand{\al}{\alpha}
\newcommand{\io}{\int_{\Om}}
\newcommand{\rnp}{{\bf R}$^n_+$}
\newcommand{\ds}{\displaystyle}
\newcommand{\mb}{\mbox}
\newcommand{\lbl}{\label}
\newcommand{\ulb}{\overline{u}_{\la}}
\newcommand{\uld}{\overline{u}_{\la_d}}
\newcommand{\noi}{\noindent}
\newcommand\bu{{\mathbf u}}
\newcommand\bL{{\mathbf L}}
\newcommand\bB{{\mathbf B}}
\newcommand\bc{{\mathbf c}}
\newcommand\bA{{\mathbf A}}
\newcommand\bH{{\mathbf H}}
\newcommand\bI{{\mathbf I}}
\newcommand\bQ{{\mathbf Q}}
\newcommand\bG{{\mathbf G}}
\newcommand\bX{{\mathbf X}}
\newcommand\bR{{\mathbb{R}}}
\newcommand\bU{{\mathbf U}}
\newcommand\bF{{\mathbf F}}
\newcommand\bV{{\mathbf V}}
\newcommand\bv{{\mathbf v}}
\newcommand\bY{{\mathbf Y}}
\newcommand\bZ{{\mathbf Z}}
\newcommand\bC{{\mathbf C}}
\newcommand\bP{{\mathbf P}}

\begin{abstract}
Spatiotemporal pattern formations in two-layer coupled reaction-diffusion Lengyel-Epstein system with distributed delayed couplings are investigated. Firstly, for the original decoupled system, it is proved that when the intra-reactor diffusion rate $\ep$ of the inhibitor is sufficiently small and the intra-reactor diffusion rate $d$ of the inhibitor is large enough, then the subsystem can exhibit non-constant positive steady state $(\widetilde{u}(x;\ep),\widetilde{v}(x;\epsilon))$ with large amplitude, and that as the parameter $\tau$ varies, the stability of $(\widetilde{u}(x;\ep),\widetilde{v}(x;\epsilon))$ changes, leading to the emergence of periodic solutions via Hopf bifurcation. Secondly, for the two-layer coupled system, the stability of the symmetric steady state $(\widetilde{u}(x;\ep),\widetilde{v}(x;\epsilon),\widetilde{u}(x;\ep),\widetilde{v}(x;\epsilon))$ is studied by treating $k_1,k_2$ (the inter-reactor diffusion rates) and $\al$ (the delay parameter) as the main parameters. In case of non-delayed couplings, the first quadrant of the $(k_1, k_2)$ parameter space can be divided into two regions: one is stable region, the other one is unstable region, and the two regions have the common boundary, which is the primary Turing bifurcation curve. In case of delayed couplings, it is shown that the first quadrant of the $(k_1, k_2)$ parameter space can be re-divided into three regions: the first one is unstable region, the second one is stable region, while the third one is the potential \lq\lq bifurcation\rq\rq region, where Hopf bifurcation may occur for suitable $\al$. Our analysis is mainly based on the singular perturbation techniques and the implicit function theorem, and the results show some different phenomena from those of the original decoupled system in one reactor.\\
		
\noindent{\bf Keywords}:  Coupled Lengyel-Epstein system; Distributed-delay coupling; Non-constant layered solution; Turing bifurcation and Hopf bifurcation; Singular limit eigenvalue problem.

\noindent{\bf MSC}:  35B32, 35P20, 35Q92, 92E20.
\end{abstract}

\section{Introduction}
One of the crucial issues in developmental biology is to concern the spatial pattern formation in the early embryo. In a remarkable work \cite{Tu}, Turing predicted that a system of reacting and diffusing chemicals could generate spatial patterns in chemical concentrations from initial near homogeneity. He suggested that this process might lead to a variety of morphogenetic phenomena. The possibility of such diffusion-driven instability, now usually termed Turing instability, has been widely investigated in the chemical and biological literatures. 

In 1990, Kepper and the collaborators \cite{CDBD} discovered the formation of the stationary patterns in the chlorite–iodide–malonic acid-starch (CIMA) reaction in an open unstirred gel reactor, which is the first unambiguous experimental evidence of the existence of the symmetry breaking reaction-diffusion structures predicted by Turing in 1952. In this CIMA chemical reaction, there are five reactants, which makes the mathematical description very complicated. However, numerical calculations showed that during oscillations in the CIMA reaction, only the concentrations of $I^-$ (the activator iodide) and $\text{ClO}_2^-$ (the inhibitor chlorite) vary significantly, and they react to produce the immobile complexing reactant malonic acid (MA). 

The partial differential equations reflecting the evolutionary behaviors of the concentrations $u:=[I^-]$, $v:=[\text{ClO}_2^-]$ and $w:=[\text{MA}]$ are given by (see \cite{LE1, LE2, LE3})
\begin{equation}\label{LE-00}
	\begin{cases}
		\begin{split}
			\dfrac{\partial u}{\partial t}=&d_1\Delta u+a-u-\dfrac{4uv}{1+u^2}-k_+u+k_-w,\\
			\dfrac{\partial v}{\partial t}=&d_2\Delta v+b\big(u-\dfrac{uv}{1+u^2}\big),\\
			\dfrac{\partial w}{\partial t}=&k_+u-k_-w,
		\end{split}
	\end{cases}
\end{equation}
where $d_1>0$ and $d_2>0$ are the diffusion rates of the activator $I^-$ and the inhibitor $\text{ClO}_2^-$, respectively. Since malonic acid is assumed to be immobile, there is no diffusion in the third equation of \eqref{LE-00}. The constants $a, b, k_+$ and $ k_-$ are all positive. Moreover, the homogeneous Neumann boundary conditions are imposed on the boundary.

If the formation and the dissociation of the complex are rapid, then $w=ku$ for some $k>0$ (see \cite{LE2} for details). Thus, by setting $\sigma=1+k$, we can reduce system \eqref{LE-00} to the following so-called reaction-diffusion Lengyel-Epstein system (\cite{LE2})
\begin{equation}\label{LE-0}
	\begin{cases}
		\begin{split}
			\dfrac{\partial u}{\partial t}&=\dfrac{1}{\sigma}\big(d_1\Delta u+a-u-\dfrac{4uv}{1+u^2}\big),\;\\
			\dfrac{\partial v}{\partial t}&=d_2\Delta v+b\big(u-\dfrac{uv}{1+u^2}\big).
		\end{split}
	\end{cases}
\end{equation}

Setting $t'=\sigma t$ and $x'=x/\sqrt{d_1}$, one can reduce system \eqref{LE-0} to
\begin{equation}\label{LE-0-b}
	\begin{cases}
		\begin{split}
			\dfrac{\partial u}{\partial t}&=\Delta u+a-u-\dfrac{4uv}{1+u^2},\;\\
			\dfrac{\partial v}{\partial t}&=\sigma c\Delta v+\sigma b\big(u-\dfrac{uv}{1+u^2}\big),
		\end{split}
	\end{cases}
\end{equation}
where we drop the $'$ from $t'$ and $x'$ for convenience, and $c$ is the effective diffusion ratio $d_2/d_1$.

The dynamics of the reaction-diffusion Lengyel-Epstein system mainly concentrate on the dynamics of the rescaled system \eqref{LE-0-b}. For example, in \cite{NT}, Ni and Tang studied Turing instability of the unique constant steady state of system \eqref{LE-0-b} and further considered the  existence/nonexistence of the non-constant steady states by using topological degree theory. In \cite{JNT}, Jang, Ni and Tang studied the global bifurcation structure of the set of the non-constant steady states for system \eqref{LE-0-b}. In \cite{JSWY}, Jin, Shi, Wei and Yi studied the interactions between Hopf bifurcations and steady state bifurcations of system \eqref{LE-0-b}. In \cite{Yi}, Yi studied Turing instability of the spatially homogeneous periodic solutions of system \eqref{LE-0-b} by using the regular perturbation methods. In \cite{ZYW}, Zhao, Yu and Wang introduced discrete time delay into the system and studied Turing bifurcation, Hopf bifurcation, Turing-Turing bifurcation, Turing-Hopf bifurcation and Bogdanov-Takens bifurcation. These results have the common features: the existence of the non-constant steady states has been established, however, the stability of these solutions was unsettled. As one of the purposes of this paper, we shall give an affirmative answer for the stability issue. 

On the other hand, since bilayer membranes or multilayer tissues are often found in biological, physical and chemical systems, spatiotemporal pattern formation in coupled system that consists of two or more coupled chemical reacting subsystems is of great significance (\cite{KS, LE2, LWO, PPR}). Generally speaking, the width of the membrane cannot be negligible compared with the size of the system, it is then necessary to include the time delay in the coupling terms (see \cite{AEKV, Bar, JL, PL, TK, YDZE, YE}). 

Motivated by \cite{JL, LE2, LWO, TK, TMN}, we make the following assumptions:
\begin{enumerate}[leftmargin=4.5em]
	\item[$($H1$)$.] There are two identical well-stirred reactors (layers), where the chemical reactions and initial or feed concentrations of the reactants are exactly the same. They are connected by a semipermeable membrane through which the chemicals can diffuse (termed \lq\lq inter-reactor\rq\rq diffusion) by Fick's first law (see \cite{LE2, TK}): the mass flux $J_X$ of a species (denoted by $X$) from reactor 1 to reactor 2 is directly proportional to the concentration difference $\Delta X=X_2-X_1$, that is, $J_X=-\rho_X\Delta X$, where $\rho_X$ is the constant of proportionality. The amount of $X$ leaving reactor 1 through the membrane of area $A$ in time $\Delta t$ is $J_XA\Delta t$. Then, the rate of change of concentration $X$ in reactor 1 (resp., reactor 2) of volume $V$ is $k_X(X_2-X_1)$ (resp., $-k_X(X_2-X_1)$), where $k_X:=\rho_XA/V$ is the \lq\lq inter-reactor effective diffusion constant\rq\rq.
	\item[$($H2$)$.] In absence of coupling, in each of the reactors, there also have \lq\lq intra-reactor\rq\rq diffusions, which are the random diffusions in the spatial domain $\Omega$ of one reactor and are described by the Laplacian terms in each equation. The dynamics of each of the two reactors (designated 1 and 2) is described by the following reaction-diffusion equations in the absence of coupling:
	\begin{equation*}
		\text{Reactor}\;i\;(i=1,2):
		\begin{cases}
			\begin{split}
				\dfrac{\partial u_i}{\partial t}&=\dfrac{1}{\sigma}\big(d_1\Delta u_i+a-u_i-\dfrac{4u_iv_i}{1+u_i^2}\big),~&(x,t)\in\Omega\times(0,+\infty),\\
				\dfrac{\partial v_i}{\partial t}&=d_2\Delta v_i+b\big(u_i-\dfrac{u_iv_i}{1+u_i^2}\big), ~&(x,t)\in\Omega\times(0,+\infty),
			\end{split}
		\end{cases}
	\end{equation*}
	subject to homogeneous Neumann boundary conditions.
	\item[$($H3$)$.] There has the distributed time delay occurring when the species cross the membrane (see \cite{JL, LYGYW, RF, ZLLX} for some practical backgrounds). For our technical reasons, we assume that distributed time delay only occurs in the couplings of activators. Although it might be more interesting if both the couplings of the activators and the inhibitors involve distributed delays (see \cite{JL}), mathematical analysis tends to be much more difficult.
\end{enumerate}

Based on $($H1$)$--$($H3$)$, if we couple system \eqref{LE-0} with an identical system, then the resulting four-component coupled system can be described by
\begin{equation}\label{C-LE-00}
	\begin{cases}
		\begin{split}
			\footnotesize\dfrac{\partial u_i}{\partial t}&=\small\frac{1}{\sigma}\bigg(d_1\Delta u_i+a-u_i-\frac{4u_iv_i}{1+u_i^2}\bigg)+k_1\bigg({\small\int_{-\infty}^{t} }F(t-s)u_j(x,s)ds-u_i\bigg), ~&\text{in}\;\Omega_\infty,\\
			\dfrac{\partial v_i}{\partial t}&=d_2\Delta v_i+b\bigg(u_i-\dfrac{u_iv_i}{1+u_i^2}\bigg)+k_2(v_j-v_i),~&\text{in}\;\Omega_\infty,\\
			\dfrac{\partial u_i}{\partial n}&=\dfrac{\partial v_i}{\partial n}=0,~&\text{on}\;\Po_\infty,
		\end{split}
	\end{cases}
\end{equation}
where $i,j\in\{1,2\}$ and $i\neq j$; $\Omega_\infty:=\Omega\times(0,\infty)$ and $\Po_\infty:=\Po\times(0,\infty)$, in which $\Omega\subset\R^N$ $(N\geq1)$ is the open bounded domain with a sufficient smooth boundary; $n$ is the outer unit normal derivative across the boundary $\Po$; $u_i$ and $v_i$ ($i=1,2$) stand for the concentrations of the activators and the inhibitors in the reactor $i$, respectively; $k_1$ and $k_2$ are the inter-reactor diffusion rates of the activators and inhibitors, respectively; the kernel $F(s)$ describes the distribution of delay effect. For our interests, we specify $F(t)$ to be the weak kernel $\alpha e^{-\alpha t}$ with $\alpha>0$ and $t\geq0$. Clearly, $\int_{-\infty}^tF(t-s)ds=1$ (see \cite{RCF, SY3}). 

Motivated by \cite{MTH, TMN}, by making the following change of variables
\begin{equation*}
	\epsilon=\sqrt{d_1/\sigma},\;\tau=b\sqrt{\sigma/d_1},\;d=d_2/b, \;\widehat t=bt, \; \widehat{k}_1=k_1\sqrt{\sigma/d_1},
\end{equation*}
we can reduce system \eqref{C-LE-00} to the following system
\begin{equation}\label{C-LE-D}
	\begin{cases}
		\begin{split}
			\tau\dfrac{\partial u_i}{\partial t}&=\epsilon\Delta u_i+\dfrac{1}{\sigma\epsilon}\bigg(a-u_i-\dfrac{4u_iv_i}{1+u_i^2}\bigg)+k_1\bigg(\int_{-\infty}^{t} F(t-s)u_j(x,s)ds-u_i\bigg),&\text{in}\;\Omega_\infty,\\
			\dfrac{\partial v_i}{\partial t}&=d\Delta v_i+u_i-\dfrac{u_iv_i}{1+u_i^2}+k_2\big(v_j-v_i\big),&\text{in}\;\Omega_\infty,\\
			\dfrac{\partial u_i}{\partial n}&=\dfrac{\partial v_i}{\partial n}=0,&\text{on}\;\Po_\infty,
		\end{split}
	\end{cases}
\end{equation}
where $i,j\in\{1,2\}$ and $i\neq j$; we still use $t$ (resp., $k_1$) to stand for $\widehat t$ (resp., $k_1$) if there is no confusion. In system \eqref{C-LE-D}, the parameter $\tau$ has the significant chemical meanings. Indeed, in the absence of coupling, if $\tau=O(1)$, $u$ species reacts faster than $v$ species does, and $u$ species diffuses slower than $v$ species does; if $\tau=O(1/\epsilon)$, $u$ species and $v$ species react to the same degree, but $u$ species diffuses much slower than $v$ species does; if $\tau=O(\epsilon)$, $u$ species reacts faster than $v$ species does, $u$ species and $v$ species react to the same degree (see \cite{TMN} for more details).

When there is no couplings in system \eqref{C-LE-D} (say, $k_1=k_2=0$), we can obtain the following subsystem (indeed two subsystems, but they are exactly identical)
\begin{equation}\label{Decoupled}
	\begin{cases}
		\begin{split}
			\tau\dfrac{\partial u}{\partial t}&=\epsilon \Delta u+\dfrac{1}{\sigma\epsilon}\big(a-u-\dfrac{4uv}{1+u^2}\big),~&\text{in}\;\Omega_\infty,\\
			\dfrac{\partial v}{\partial t}&=d\Delta v+u-\dfrac{uv}{1+u^2},~&\text{in}\;\Omega_\infty,\\
			\dfrac{\partial u}{\partial n}&=\dfrac{\partial v}{\partial n}=0,~&\text{on}\;\Po_\infty.
		\end{split}
	\end{cases}
\end{equation}

Clearly, if $(U(x),V(x))$ is the steady state of subsystem \eqref{Decoupled}, then by $\int_{-\infty}^tF(t-s)ds=1$, $(U(x),V(x),U(x),V(x))$ is the steady state of system \eqref{C-LE-D}, which is called the symmetric steady state (or reactor-based \lq\lq spatially homogeneous\rq\rq steady state).

In the remaining part of the paper, we shall always concentrate on the dynamics of both the coupled system \eqref{C-LE-D} and its decoupled subsystem \eqref{Decoupled}. For the convenience of our discussions, throughout the paper, we always assume that $\Omega=(0,\ell)$, with $\ell>0$.

Firstly, for the two-component decoupled subsystem \eqref{Decoupled}, it is found that when $\ep$ is sufficiently small and $d$ is large enough, it can exhibit the non-constant positive steady state $(\widetilde{u}(x;\ep),\widetilde{v}(x;\epsilon))$ with large amplitudes (see Theorem \ref{th.1-0}). Moreover, it is proved that as the parameter $\tau$ varies, the stability of $(\widetilde{u}(x;\ep),\widetilde{v}(x;\epsilon))$ changes, leading to Hopf bifurcation. More precisely, there exists a $\tau_c^\ep\in(0,+\infty)$, such that $(\widetilde{u}(x;\ep),\widetilde{v}(x;\epsilon))$ is stable when $\tau\in(\tau_c^\ep, +\infty)$, unstable when $\tau\in(0,\tau_c^\ep)$, while Hopf bifurcation occurs at $\tau=\tau_c^\ep$ (see Theorem \ref{th.1-1}). 

Secondly, for the four-component coupled system \eqref{C-LE-D}, we are mainly interested in examining how $k_1$, $k_2$, and $\al$ can affect the stability of the symmetric steady state $(\widetilde{u}(x;\ep),\widetilde{v}(x;\epsilon), \widetilde{u}(x;\ep),\widetilde{v}(x;\epsilon))$. To that end, we first assume that $\tau\in(\tau_c^\ep, +\infty)$ so that $(\widetilde{u}(x;\ep),\widetilde{v}(x;\epsilon), \widetilde{u}(x;\ep),\widetilde{v}(x;\epsilon))$ is stable when there are no couplings between the two reactors. Need to note that $T=1/\alpha$ is the average delay. Thus, if $\al=+\infty$, then there is no time delay. If $0<\al<+\infty$, then there involves time delay. Moreover, the larger (resp., smaller) $\al$ is, the smaller (resp., larger) the average time delay $T$ is. We have the following results:
\begin{enumerate}
	\item The non-delayed coupling ($\al=+\infty$). It is proved that, roughly speaking, the first quadrant of the $(k_1,k_2)$ parameter space can be divided into two distinct regions: one is stable region $\Gamma_2^\ep\cup\Gamma_3^\ep$ and the other one is the \lq\lq Turing unstable\rq\rq region $\Gamma_1^\ep$, where $\Gamma_1^\ep$ and $\Gamma_2^\ep$ have the common boundary $\mathcal C^\ep$, which is indeed the primary steady state bifurcation curve (see Figure \ref{Fig.gamep}). More precisely, for sufficiently small $\ep>0$, there exists a constant $\rho_0^*>0$ such that if $k_1>\rho_0^*/2$, then $(\widetilde{u}(x;\ep),\widetilde{v}(x;\ep),\widetilde{u}(x;\ep),\widetilde{v}(x;\ep))$ is always stable regardless of $k_2>0$, and that if $k_1<\rho_0^*/2$, then there exists a $k_2^\ep>0$, such that $(\widetilde{u}(x;\ep),\widetilde{v}(x;\ep),\widetilde{u}(x;\ep),\widetilde{v}(x;\ep))$ is stable when $k_2<k_2^\ep$, while Turing unstable when $k_2>k_2^\ep$. At $k_2=k_2^\ep$, near the symmetric solution $(\widetilde{u}(x;\ep),\widetilde{v}(x;\ep),\widetilde{u}(x;\ep),\widetilde{v}(x;\ep))$, the Turing-type steady state bifurcation occurs (see Theorem \ref{th.2-1}). Meanwhile, Hopf bifurcations around $(\widetilde{u}(x;\ep),\widetilde{v}(x;\ep),\widetilde{u}(x;\ep),\widetilde{v}(x;\ep))$ can never be possible. Moreover, it seems that the smaller $\rho_0^*$ is, the larger the \lq\lq stable\rq\rq region is, where $\rho_0^*$ depends on the diffusion rate $d$ and the spatial interval $\Omega$ (see \eqref{App1} in Appendix). Need to note that the special case of $k_1=0$ and $k_2>0$ has been studied by Takaishi, Mimura and Nishiura \cite{TMN}, and our results indeed extend the results of \cite{TMN} to the general case of $k_1>0$ and $k_2>0$.
	\item The delayed coupling ($0<\al<+\infty$). We choose $\al$ to be the main bifurcation parameter, where $T=1/\alpha$ is the average delay. It is shown that the first quadrant of $(k_1,k_2)$ parameter space can be re-divided into four distinct regions (see Fig. \ref{Fig.s-dep}): $\Gamma_{1}$ is the unstable region (regardless of $\al$) and $\Gamma_{3-2}$ is the stable region (regardless of $\al$), indicating that in regions $\Gamma_{1}$ and $\Gamma_{3-2}$, the coupled system is robust to time delay (see Theorem \ref{cor.ue}); $\Gamma_{2}\cup\Gamma_{3-1}$ is the potential \lq\lq bifurcation\rq\rq region, where Hopf bifurcation may occur for some suitable $\alpha$. The regions $\Gamma_{1}$ and $\Gamma_{2}$ have the common boundary $\mathcal{C}$, where the bifurcation of higher co-dimension (with respect to $\alpha$) might occur. More precisely, in the potential \lq\lq bifurcation\rq\rq region $\Gamma_{2}\cup\Gamma_{3-1}$, under the conditions of Theorem \ref{th.3-1}, there exists a critical value $\alpha_H^\ep>0$ depending on $\ep$, such that the symmetric solution $(\widetilde{u}(x;\ep),\widetilde{v}(x;\epsilon), \widetilde{u}(x;\ep),\widetilde{v}(x;\epsilon))$ is stable when $\alpha\in(\alpha_H^\ep,\infty)$, while unstable when $\alpha$ is slightly less than $\alpha_H^\ep$; at $\alpha=\alpha_H^\ep$, system \eqref{C-LE-D} undergoes a Hopf bifurcation near the symmetric solution. 
\item Similarities and differences (between $\al=+\infty$ and $0<\al<+\infty$). As $\ep\to0^+$, $\Gamma_1^\ep\to\Gamma_1$, $\Gamma_2^\ep\to\Gamma_2$ and $\mathcal C^\ep\to\mathcal C$. For sufficiently small $\ep>0$, $\Gamma_3^\ep\equiv\Gamma_{3-1}\cup\Gamma_{3-2}$. When $\ep>0$ is sufficiently small, there exists at least a subregion $\Gamma_1^{\ep,u}$ of $\Gamma_1^\ep$ such that in this subregion $(\widetilde{u}(x;\ep),\widetilde{v}(x;\ep),\widetilde{u}(x;\ep),\widetilde{v}(x;\ep))$ is always unstable in both $\al=+\infty$ and $0<\al<+\infty$, showing that in such case time delay can never change the stability of the steady state; there also exists at least a subregion $\Gamma_2^{\ep,u}$ of $\Gamma_2^\ep\cup\Gamma_{3-2}$ such that in this subregion $(\widetilde{u}(x;\ep),\widetilde{v}(x;\ep),\widetilde{u}(x;\ep),\widetilde{v}(x;\ep))$ is stable when $\al=+\infty$, but unstable for some $\alpha\in(0,+\infty)$, implying that in such case, time delay does change the stability. In the subregion $\Gamma_{3-2}$ of $\Gamma_3^\ep$, $(\widetilde{u}(x;\ep),\widetilde{v}(x;\ep),\widetilde{u}(x;\ep),\widetilde{v}(x;\ep))$ is always stable in both $\al=+\infty$ and $0<\al<+\infty$, showing again that in such case time delay can never change the stability of the steady state.
\end{enumerate}

Finally, our main novelties are listed in the following way: for system \eqref{Decoupled}, it is firstly reported that it can exhibit Hopf bifurcation branching from non-constant steady state. For system \eqref{C-LE-D}, it is the first to report Turing instability of its non-constant steady state. A different scenery is that in our model there are two different kinds of diffusions: intra-reactor diffusions ($\ep$ and $d$) and inter-reactor diffusions ($k_1$ and $k_2$). Our Turing instability results are different from previous works (\cite{LE2, JSWY, Ma, Mo0, Mo, NT, De, RCF, SY0, SY1, SY2, SY3, WY}). Moreover, our analysis for system \eqref{C-LE-D} follows the lines of \cite{NM}, but we consider the problem in a quite different setting (4-component coupled system with distributed delays). We show that the basic framework of \cite{NM} works for the coupled system \eqref{C-LE-D}, but the generalization is non-trivial as analysis for the eigenvalue problems is difficult.

The present paper is organized as follows. In \S 2, for system \eqref{Decoupled}, we study the existence and stability of positive non-constant steady state $(\widetilde{u}(x;\ep),\widetilde{v}(x;\epsilon)$. In particular, we show the existence of Hopf bifurcation branching from the non-constant steady state solution. In \S 3 and \S 4, we study the stability and instability of $(\widetilde{u}(x;\ep),\widetilde{v}(x;\epsilon), \widetilde{u}(x;\ep),\widetilde{v}(x;\epsilon))$ with respect to system \eqref{C-LE-D} in the case of $\alpha=+\infty$ and $\alpha\in(0,+\infty)$, respectively. In \S 5, we end up our investigations by drawing some conclusions. In Appendix, we list some well-known results on the spectral properties of the elliptic operator. 

\section{Dynamics of the decoupled subsystem \eqref{Decoupled}: one reactor}
In this section, we consider the dynamics of subsystem \eqref{Decoupled}. For convenience, we copy \eqref{Decoupled} here:
\begin{equation}\label{2.1}
	\begin{cases}
		\begin{split}
			\tau\dfrac{\partial u}{\partial t}&=\epsilon u_{xx}+\dfrac{1}{\epsilon\sigma}\bigg(a-u-\dfrac{4uv}{1+u^2}\bigg),~&x\in(0,\ell),\;t>0,\\
			\dfrac{\partial v}{\partial t}&=dv_{xx}+u-\dfrac{uv}{1+u^2},~&x\in(0,\ell),\;t>0,\\
			u_x(0&,t)=u_x(\ell,t)=v_x(0,t)=v_x(\ell,t)=0,~&t>0.
		\end{split}
	\end{cases}
\end{equation}

Clearly, system \eqref{2.1} has a unique constant steady state solution $(u_*,v_*):=\big(\frac{a}{5}, 1+\frac{a^2}{25}\big)$.

The linearized operator of system \eqref{2.1} evaluated at $(u_*,v_*)$ is given by $L:=[L_{ij}]_{2\times2}$, where
\begin{equation*}
	L_{11}:=\dfrac{3a^2-125}{\sigma\epsilon\tau(a^2+25)},\;L_{12}:=-\dfrac{20a}{\sigma\epsilon\tau(a^2+25)},\;L_{21}:=\dfrac{2a^2}{a^2+25},\;L_{22}:=-\dfrac{5a}{a^2+25}.
\end{equation*}

Assume that $a>\dfrac{5}{3}\sqrt{15}$ so that system \eqref{2.1} is an activator-inhibitor system in the sense that $L_{11}>0$, $L_{12}<0$, $L_{21}>0$ and $L_{22}<0$.

The steady state problem of \eqref{2.1} is governed by the following elliptic equations
\begin{equation}\label{2.2}
	\begin{cases}
		\begin{split}
			&\epsilon^2u_{xx}+\dfrac{1}{\sigma}\big(a-u-\dfrac{4uv}{1+u^2}\big)=0,~&x\in(0,\ell),\\
			&dv_{xx}+u-\dfrac{uv}{1+u^2}=0,~&x\in(0,\ell),\\
			&u'(0)=u'(\ell)=v'(0)=v'(\ell)=0.
		\end{split}
	\end{cases}
\end{equation}

We denote $C_\epsilon ^2\left( [0,\ell] \right)$ by the subspace of $C^2\left( [0,\ell]\right)$ with norm
\begin{equation*}
	\Vert u  \Vert_{C_\epsilon ^2([0,\ell])}:= \sum\limits_{k = 0}^2 {\max\limits_{x\in[0,\ell]} \left| {{{\left( {\epsilon \frac{d}{{dx}}} \right)}^k}u\left( x \right)} \right|}.
\end{equation*}

Then, we have the following results on the existence of positive singularly perturbed steady state solutions of system \eqref{2.1} for small $\epsilon>0$.
\begin{theorem}\label{th.1-0}
	Suppose that $a>\dfrac{5}{3}\sqrt{15}$ holds. Then, the following conclusions hold true:
	\begin{enumerate}
		\item There exist positive constants $d_0$ and $\epsilon_0$, such that for any $0<\epsilon<\epsilon_0$ and $d>d_0$, system \eqref{2.1} has an $\epsilon$-family of positive non-constant steady state solutions $(\widetilde{u}(x;\ep),\widetilde{v}(x;\epsilon))\in C_{\ep}^{2}([0,\ell])\times C^{2}([0,\ell])$. 
		\item Moreover, for any $\kappa>0$ and $I_{\kappa}:=(x^*-\kappa,x^*+\kappa)$,
		\begin{equation}\label{tends}
			\begin{split}
				\lim\limits_{\ep\to 0^+} \widetilde{u}(x;\ep)=&U^*(x)\;\text{uniformly on $(0,\ell)\backslash I_{\kappa}$},\;\lim\limits_{\ep\to 0^+} \widetilde{v}(x;\ep)=V^*(x)\;\text{uniformly on $(0,\ell)$},
			\end{split}
		\end{equation}
		where $x^*\in(0,\ell)$ is the layer position, and $(U^*(x), V^*(x))$ is the discontinuous solution of the reduced system $($$\epsilon=0$$)$ of \eqref{2.1}, which is defined in \eqref{U*V*}.
	\end{enumerate}
\end{theorem}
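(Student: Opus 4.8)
The plan is to construct $(\widetilde u(x;\epsilon),\widetilde v(x;\epsilon))$ as an exact solution lying $O(\epsilon)$-close, in the norm of $C^2_\epsilon([0,\ell])\times C^2([0,\ell])$, to an explicit singular profile carrying a single interior transition layer at a point $x^*\in(0,\ell)$; the closeness comes from matched asymptotic expansions, which are then turned into a rigorous statement by a contraction–mapping (quantitative implicit function theorem) argument. Throughout set $f(u,v):=\frac1\sigma\big(a-u-\frac{4uv}{1+u^2}\big)$ and $g(u,v):=u-\frac{uv}{1+u^2}$, so that \eqref{2.2} reads $\epsilon^2u_{xx}+f(u,v)=0$, $dv_{xx}+g(u,v)=0$ with homogeneous Neumann data. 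First one analyses the slow manifold $f(u,v)=0$, that is $v=\varphi(u):=\frac{(a-u)(1+u^2)}{4u}$ on $(0,a)$: a short computation shows that $a>\frac53\sqrt{15}$ is equivalent to $f_u(u_*,v_*)>0$, and this forces $\varphi$ to be N-shaped with its two folds $0<u^-<u^+<a$ straddling $u_*=a/5$. Hence for $v$ in the overlap interval $I_v:=(\varphi(u^-),\varphi(u^+))$ the equation $\varphi(u)=v$ has three roots; keep the lowest branch $u=h_-(v)\in(0,u^-)$ and the highest $u=h_+(v)\in(u^+,a)$, both satisfying $f_u(h_\pm(v),v)<0$.

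Next, pin down the layer value. Stretching $x=x^*+\epsilon\xi$ and freezing $v$ at the (leading-order constant) layer value $v^*$, the inner equation is $\Phi_{\xi\xi}+f(\Phi,v^*)=0$ with $\Phi(-\infty)=h_+(v^*)$, $\Phi(+\infty)=h_-(v^*)$; such a strictly monotone, exponentially converging heteroclinic exists precisely when the equal-area (Maxwell) condition $J(v^*):=\int_{h_-(v^*)}^{h_+(v^*)}f(s,v^*)\,ds=0$ holds. Since $J$ is positive near the left end of $I_v$ and negative near the right end, the intermediate value theorem (plus a monotonicity check) yields a unique $v^*\in I_v$; fix the corresponding profile $\Phi$. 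Define the outer fields by $U^*(x)=h_+(V^*(x))$ on $(0,x^*)$ and $U^*(x)=h_-(V^*(x))$ on $(x^*,\ell)$, where $V^*$ solves the reduced system \eqref{U*V*}: $dV^*_{xx}+g(U^*(x),V^*)=0$ on $(0,\ell)\setminus\{x^*\}$, $V^*_x(0)=V^*_x(\ell)=0$, $V^*$ and $V^*_x$ continuous across $x^*$, together with the matching constraint $V^*(x^*)=v^*$. Using the identity $g=\frac{5u-a}{4}$ on the slow manifold, as $d\to\infty$ this collapses to $V^*\equiv v^*$ together with the Neumann solvability relation $x^*\big(5h_+(v^*)-a\big)+(\ell-x^*)\big(5h_-(v^*)-a\big)=0$, which—because $h_-(v^*)<u^-<u_*<u^+<h_+(v^*)$ by the previous step—has a unique root $x^*_0\in(0,\ell)$. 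For $d$ large the implicit function theorem, applied to the map $(V^*,x^*,d^{-1})\mapsto(\text{residual},\text{constraint})$ at $(v^*,x^*_0,0)$, produces $(V^*,x^*)$ with $V^*\to v^*$ uniformly on $(0,\ell)$ and $x^*\to x^*_0$ as $d\to\infty$; note $U^*\in(0,a)$ and $V^*>0$, so the profile is positive and non-constant.

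With the singular profile in hand, glue $\Phi\big((x-x^*)/\epsilon\big)$ to $(U^*,V^*)$ by a smooth cut-off supported near $x^*$ to obtain an approximate solution $(\bar u_\epsilon,\bar v_\epsilon)$. Exponential decay of $\Phi$ to $h_\mp(v^*)$ together with the standard matching estimates shows that the residual $\mathcal F_\epsilon(\bar u_\epsilon,\bar v_\epsilon)$ of \eqref{2.2} is exponentially small away from $x^*$ and $O(\epsilon)$ overall in the norm adapted to $C^2_\epsilon\times C^2$. Seeking an exact solution of the form $(\bar u_\epsilon,\bar v_\epsilon)+(\phi,\psi)$, the correction solves a fixed-point equation whose linear part is $L_\epsilon:=D\mathcal F_\epsilon(\bar u_\epsilon,\bar v_\epsilon)$; once $L_\epsilon$ is shown invertible with $\|L_\epsilon^{-1}\|$ bounded uniformly over the relevant range of $\epsilon$ (with $d$ large and $\tau$ fixed), the contraction-mapping principle furnishes a unique nearby exact solution $(\widetilde u(x;\epsilon),\widetilde v(x;\epsilon))\in C^2_\epsilon([0,\ell])\times C^2([0,\ell])$, which remains positive and non-constant. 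The limits \eqref{tends} are then read off directly from the construction, with $I_\kappa$ absorbing the shrinking layer around $x^*$.

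The main obstacle is the uniform invertibility of $L_\epsilon$. Because the inner problem is translation-invariant, its linearization has $\Phi_\xi$ in the kernel, and after the rescaling this produces an eigenvalue of $L_\epsilon$ of size $O(\epsilon^2)$; a naive bound on $L_\epsilon^{-1}$ therefore diverges, and this near-kernel must be removed. This is precisely the singular limit eigenvalue problem (SLEP) situation: one promotes the layer position $x^*$ to an additional modulation unknown, projects off the translational mode, and proves that the resulting reduced operator is boundedly invertible; the would-be critical eigenvalue is controlled through its coupling to the slow $v$-equation, and the hypothesis that $d$ be large is exactly what keeps this coupling nondegenerate (it is also what makes the reduced problem solvable above). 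Carrying out these resolvent bounds uniformly in $\epsilon$ inside the $\epsilon$-weighted space is the technical core of the proof; everything else is the by-now-standard bookkeeping of matched asymptotics.
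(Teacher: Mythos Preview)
Your proposal is correct in outline and would work, but it takes a different route from the paper. The paper does not carry out the matched-asymptotics-plus-contraction-mapping construction directly; instead it reduces the problem to verifying the five structural hypotheses (A0)--(A5) of Nishiura--Mimura \cite{NM} for the specific nonlinearities $f(u,v)=\frac{1}{\sigma}\big(a-u-\frac{4uv}{1+u^2}\big)$ and $g(u,v)=u-\frac{uv}{1+u^2}$: existence of a unique constant equilibrium on the middle branch, the sigmoidal shape of $\{f=0\}$ (citing \cite{JNT}), the Maxwell condition $M(\widehat v)=0$ with $M'(\widehat v)<0$, the sign of $f_u$ on the outer branches, and the sign/determinant conditions on $g$. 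Once these are checked, existence, regularity, and the asymptotics \eqref{tends} are quoted from \cite{NM,NF,MTH} as a black box. Your approach is more self-contained and makes the mechanism visible; the paper's is shorter and modular but outsources the analysis. One terminological point: what you call ``the SLEP situation'' in the existence argument is really a Lyapunov--Schmidt reduction (modulate $x^*$, project off the translational mode $\Phi_\xi$); SLEP proper is the eigenvalue technique used later for stability (Theorem~\ref{th.1-1} and \S3--4), not for constructing the steady state. Also, a minor orientation discrepancy: you place $h_+$ on $(0,x^*)$ and $h_-$ on $(x^*,\ell)$, whereas the paper's $(U^*,V^*)$ in \eqref{U*V*} uses the opposite convention; both yield valid layered solutions, but be consistent with the paper's choice if you use this in later sections.
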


\begin{proof} Our proof falls into the general framework of \cite{NM}. To prove the desired results, we need to verify (A0)-(A5) stated in \cite{NM} (pp. 484-485).
	
	Step 1. To show that there is an intersection of 
	\begin{equation*}
		f(u,v):=\dfrac{1}{\sigma}\big(a-u-\dfrac{4uv}{1+u^2}\big)=0,\;\text{and}\;g(u,v):=u-\dfrac{uv}{1+u^2}=0. 
	\end{equation*}
	This is obvious since the unique intersection point is $(u_*,v_*)$.
	
	Step 2. To check that: $f(u,v)=0$ is sigmoidal and it consists of three continuous curves $u=h_-(v)$, $u=h_0(v)$ and $u=h_+(v)$ which are defined in the intervals $I_-$, $I_0$, $I_+$, respectively, such that 
	$h_-(v)<h_0(v)<h_+(v)$ holds for $v\in (\underline{v},\overline{v})$ and $h_+(v)$ (resp., $h_-(v)$) coincides with $h_0(v)$ at only one point $v=\overline{v}$ (resp., $v=\underline{v}$), where 
	$\underline{v}:=\min I_-$, $\overline{v}:=\max I_+$. 
	
	Indeed, by Lemmas 4.1 and 4.2 of \cite{JNT}, if $a>\frac{5}{3}\sqrt{15}$, then $f(u,v)=0$ is sigmoidal and it consists of three continuous curves $u=h_-(v)$, $u=h_0(v)$, $u=h_+(v)$ (see Fig. \ref{Fig. f & g}). Moreover, by (4.9) and (4.10) of \cite{JNT}, we have
	\begin{equation*}
		\begin{split}
			\max\bigg\{1,&\frac{8}{a}\bigg\}<\underline{u}:=h_-(\underline v)<\frac{a}{5},\;\frac{2a}{5}<\overline{u}:=h_+(\overline v)<\frac{a}{2},\\
			\max&\left\{2,1+\frac{64}{a^2}\right\}<\underline{v}<1+\frac{a^2}{25}<\overline{v}<\frac{a^2}{12}-\frac{1}{4}.
		\end{split}
	\end{equation*}
	
	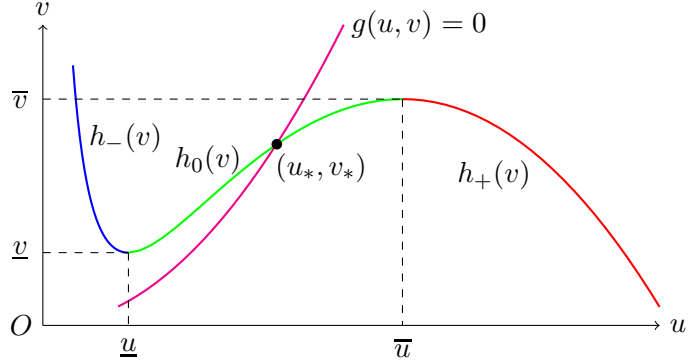
\begin{figure}[!t]
		\centering
		\begin{tikzpicture}
			\draw[->] (0,0) --(8.2,0) node[right] {$u$};
			\draw[->] (0,0) --(0,4) node[above] {$v$};
			\draw  (0,0) node[left] {$O$};
			\draw[blue,thick] plot[domain=0.4:1.1378,line width=12pt] (\x,{(\x^2 +1)*(10-\x)/(4*\x)-3.5});
			\draw[green,thick] plot[domain=1.1378:4.781] (\x,{(\x^2 +1)*(10-\x)/(4*\x)-3.5});
			\draw[red,thick] plot[domain=4.781:8.2,line width=12pt] (\x,{(\x^2 +1)*(10-\x)/(4*\x)-3.5});
			\draw[magenta,thick] plot[domain=1:4,line width=12pt] (\x,{\x^2/4});
			\draw node at (1.1,2.5){$h_-(v)$};
			\draw node at (2.2,2.2){$h_0(v)$};
			\draw node at (6,2){$h_+(v)$};
			\draw node at (5,4){$g(u,v)=0$};
			\fill (3.11,2.41) circle (2pt) node at (3.7,2.1) {$(u_*,v_*)$};
			\draw [dashed](1.1378,0.968076)--(1.1378,0) node at (1.1378,-0.3){$\underline{u}$};
			\draw [dashed](4.781,3.01091)--(4.781,0) node at (4.781,-0.3){$\overline{u}$};
			\draw [dashed](0,0.968076)--(1.1378,0.968076) node at (-0.3,1){$\underline{v}$};
			\draw [dashed](0,3.01091)--(4.781,3.01091) node at (-0.3,3.01091){$\overline{v}$};
		\end{tikzpicture}
		\caption{Nullclines of $u$ and $v$: $f(u,v)=0$ and $g(u,v)=0$.}
		\label{Fig. f & g}
	\end{figure}
	
	Step 3. To prove that $M(v)=0$ has a unique root $\widehat v$ in $(\underline v, \overline v)$ and that $M'(\widehat v)<0$, where 
	\begin{equation*}
		M(v):=\ds\int_{{h_ - }(v)}^{{h_ + }(v)}f(s,v)ds=\dfrac{1}{\sigma}\int_{{h_ - }(v)}^{{h_ + }(v)} \bigg(a-s-\dfrac{4sv}{1+s^2}\bigg)ds.
	\end{equation*}
	
	Indeed, for any $u,v>0$, $f_v(u,v)<0$. Thus,  in the first quadrant, $f(u,v)<0$ in the region above $f(u,v)=0$; while  $f(u,v)>0$ in the region below $f(u,v)=0$. Thereby, we have $M(\underline{v})>0$ and $M(\overline{v})<0$. Moreover, by $f(h_+(v),v)=f(h_-(v),v)=0$, we have
	\begin{equation*}
		M'(v)=-\dfrac{4}{\sigma}\int_{h_-(v)}^{h_+(v)}\dfrac{s}{1+s^2}ds<0,
	\end{equation*}
	implying that $M(v)=0$ has a unique root $\widehat v$ in $(\underline v, \overline v)$ and $M'(\widehat v)<0$.
	
	Step 4. To verify that for any $(u,v)\in \mathscr R_+\cup \mathscr R_-$, $f_u(u,v)<0$, where 
	\begin{equation*}
		\begin{split}
			\mathscr R_+:=&\big\{ (u,v)|u=h_+(v)\;\; \text{for} \;\;\widehat v\leq v<\overline{v} \big\},\; \mathscr R_-:=\big\{(u,v)|u=h_-(v)\;\; \text{for} \;\;\underline{v}\leq v<\widehat v\big\}.
		\end{split}
	\end{equation*}
	
	In fact, for any $u,v>0$, $f_v(u,v)<0$. Then, in the left (resp., right) side of $\mathscr R_\pm$, we have $f(u,v)>0$ (resp., $<0$). Thus, $f_u(u,v)\big|_{\mathscr R_{\pm}}<0$. 
	
	Step 5. To show that 
	\begin{equation}\label{det*}
		g(u,v)\big|_{(u,v)\in\mathscr R_-}<0<g(u,v)\big|_{(u,v)\in\mathscr R_+},\;\det\left[ \dfrac{\pa(f,g)}{\pa(u,v)}\right]\bigg|_{(u,v)\in\mathscr R_{\pm}} >0.
	\end{equation} 
	
	Indeed, for any $u,v>0$, $g_v(u,v)<0$. Thus, one can verify that $g(u,v)|_{\mathscr R_-}<0<g(u,v)|_{\mathscr R_+}$, since the intersection point $(u_*,v_*)$ lies on the middle branch of  $f(u,v)=0$. Moreover, 
	\begin{equation*}
		\dfrac{\pa(f,g)}{\pa(u,v)}\bigg|_{(u,v)\in\mathscr R_{\pm}}=\dfrac{5u}{\sigma(1+u^2)^3}\bigg|_{(u,v)\in\mathscr R_{\pm}}>0.
	\end{equation*}
	
	So far, we have verified $(A0)$-$(A5)$ in \cite{NM}. Then, by \cite{NM}, we can conclude the existence of the positive non-constant steady state solution $(\widetilde u(x;\epsilon), \widetilde v(x;\epsilon))$ for $0<\epsilon<\epsilon_0$ and $d>d_0$ with $\epsilon_0>0$ and $d_0>0$.
	
	In case of $\ep=0$, the elliptic system \eqref{2.2} is reduced to
	\begin{equation}\label{R2.2}
		\begin{cases}
			\begin{split}
				&a-u-\dfrac{4uv}{1+u^2}=0,&x\in(0,\ell),\\
				&dv_{xx}+u-\dfrac{uv}{1+u^2}=0,&x\in(0,\ell),\\
				&u'(0)=u'(\ell)=v'(0)=v'(\ell)=0.
			\end{split}
		\end{cases}
	\end{equation}
	
	Solving $u$ from the first equation of \eqref{R2.2}, we have $u=h_i(v)$ for $i=0,+,-$. Substituting $u=h_i(v)$ into the second equation of \eqref{R2.2}, we have (see \cite{MTH})
	\begin{equation*}\label{R2.3}
		\begin{cases}
			\begin{split}
				&dv_{xx}+G_i(v)=0,~&x\in(0,\ell),\\
				&v'(0)=v'(\ell)=0,
			\end{split}
		\end{cases}
	\end{equation*}
	where, for $i=0,+,-$,
	\begin{equation*}
		G_i(v):=h_i(v)-\dfrac{h_i(v)v}{1+h_i(v)^2}.
	\end{equation*}
	
	Choosing $\widehat{v}\in I_-\cup I_+$, given by Step 3, as the separating point, we can combine $G_\pm(v)$  by
	%\begin{equation*}
	%G(v;\widehat{v})=
	%\begin{cases}
	%h_-(v)-\dfrac{h_-(v)v}{1+h_-(v)^2},~&v\in I_-\cup\{v<\widehat{v}\},\\
	%h_+(v)-\dfrac{h_+(v)v}{1+h_+(v)^2},~&v\in I_+\cup\{v>\widehat{v}\},
	%\end{cases}
	%\end{equation*}
	\begin{equation*}
		G(v;\widehat{v})=
		\begin{cases}
			G_-(v),~&v\in I_-\cup\{v<\widehat{v}\},\\
			G_+(v),~&v\in I_+\cup\{v>\widehat{v}\},
		\end{cases}
	\end{equation*}
	which is discontinuous at $\widehat{v}$ of the first kind.
	
	As in \cite{MTH,NF,NM}, there exists a $x^*\in I$, such that \eqref{R2.2} can be decomposed by the following way:
	\begin{equation}\label{R2.3-1}
		\begin{cases}
			\begin{split}
				&dV_{xx}+h_-(V)-\dfrac{h_-(V)V}{1+h_-(V)^2}=0,\;x\in(0,x^*),\\
				&V'(0)=0,\;\; V(x^*)=\widehat v,
			\end{split}
		\end{cases}
	\end{equation}
	and
	\begin{equation}\label{R2.3-2}
		\begin{cases}
			\begin{split}
				&dV_{xx}+h_+(V)-\dfrac{h_+(V)V}{1+h_+(V)^2}=0,\;x\in(x^*,\ell),\\
				&V(x^*)=\widehat v,\;\; V'(\ell)=0,
			\end{split}
		\end{cases}
	\end{equation}
	with $C^1$ matching condition: $\frac{dV^-}{dx}(x^*)=\frac{dV^+}{dx}(x^*)$, where $V^-$ and $V^+$ are the solutions of \eqref{R2.3-1} and \eqref{R2.3-2}, respectively.
	
	Then, $(U^*(x),V^*(x))$ is the weak solution of system \eqref{R2.2}, where
	\begin{equation}\label{U*V*}
		V^*(x)=
		\begin{cases}
			V^-,\;x\in(0,x^*),\\
			V^+,\;x\in(x^*,\ell).
		\end{cases},\;
		U^*(x)=
		\begin{cases}
			h_-(V^-),\;x\in(0,x^*),\\
			h_+(V^+),\;x\in(x^*,\ell).
		\end{cases}
	\end{equation}
	
Finally, the asymptotic behaviors in terms of \eqref{tends} can be obtained by Appendix 1 in \cite{NF} and Theorem 1.1 in \cite{NM}. For more details, we refer readers to \cite{MTH,NF,NM}.
\end{proof}

Next, we concentrate on the stability/instability of $(\widetilde{u}(x;\epsilon),\widetilde{v}(x;\epsilon))$ for $\epsilon\in(0,\epsilon_0)$. We have the following results.
\begin{theorem}\label{th.1-1}
	Suppose that $a>\dfrac{5}{3}\sqrt{15}$ holds so that $(\widetilde{u}(x;\epsilon),\widetilde{v}(x;\epsilon))$ exists for any $\epsilon\in(0,\epsilon_0)$. Then, there exists a $\tau_c^{\ep}>0$, such that $(\widetilde{u}(x;\epsilon),\widetilde{v}(x;\epsilon))$ is locally asymptotically stable with respect to system \eqref{2.1} when $\tau>\tau_c^{\ep}$, while unstable when $\tau<\tau_c^{\ep}$. In particular, at $\tau=\tau_c^{\ep}$, Hopf bifurcation occurs around $(\widetilde{u}(x;\epsilon),\widetilde{v}(x;\epsilon))$. 
\end{theorem}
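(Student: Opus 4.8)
The plan is to analyze the linearized eigenvalue problem at $(\widetilde u(x;\ep),\widetilde v(x;\ep))$ by the singular limit eigenvalue problem (SLEP) method, in the spirit of \cite{NF,NM}, and to exhibit a simple complex-conjugate pair of eigenvalues crossing the imaginary axis transversally as $\tau$ decreases. Linearizing \eqref{2.1} at the layered solution and looking for perturbations $\propto e^{\la t}$ yields, with $\widetilde f_u:=f_u(\widetilde u,\widetilde v)$, etc.\ (where $f,g$ are the functions of Step 1 of the proof of Theorem \ref{th.1-0}, so that $f$ carries the factor $1/\sigma$),
\begin{equation*}
	\begin{cases}
		\ep^{2}\phi_{xx}+(\widetilde f_u-\ep\tau\la)\,\phi=-\widetilde f_v\,\psi, & x\in(0,\ell),\\
		d\psi_{xx}+\widetilde g_u\,\phi+(\widetilde g_v-\la)\,\psi=0, & x\in(0,\ell),\\
		\phi'(0)=\phi'(\ell)=\psi'(0)=\psi'(\ell)=0,
	\end{cases}
\end{equation*}
the first line being $\ep$ times the linearized $u$-equation. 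Local asymptotic stability of $(\widetilde u,\widetilde v)$ is equivalent to all eigenvalues $\la$ having $\mathrm{Re}\,\la<0$, so it suffices to control the ``critical'' eigenvalues, i.e.\ those that can approach $i\R$ as $\ep\to 0^{+}$.

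Next I would perform the SLEP reduction. Put $L^{\ep}(\mu):=\ep^{2}\pa_{xx}+\widetilde f_u(\cdot\,;\ep)-\mu$ with homogeneous Neumann conditions. Away from the layer position $x^{*}$ one has $\widetilde f_u\to f_u(U^{*}(x),V^{*}(x))<0$ (Step 4 of the proof of Theorem \ref{th.1-0}), so there $L^{\ep}(0)$ behaves like multiplication by a negative function; near $x^{*}$ it possesses exactly one small principal eigenvalue $\zeta^{\ep}\to0$ whose eigenfunction concentrates at $x^{*}$ and is, to leading order, a rescaling of $\widetilde u_{x}$. Hence, for $\la$ in a fixed bounded set and $\ep$ small, $L^{\ep}(\ep\tau\la)$ is boundedly invertible off an $o(1)$-neighborhood of $\la=\zeta^{\ep}/(\ep\tau)$; solving $\phi=-L^{\ep}(\ep\tau\la)^{-1}(\widetilde f_v\psi)$ and substituting into the $v$-equation gives
\begin{equation*}
	d\psi_{xx}+(\widetilde g_v-\la)\,\psi-\widetilde g_u\,L^{\ep}(\ep\tau\la)^{-1}(\widetilde f_v\psi)=0 .
\end{equation*}
Splitting $L^{\ep}(\ep\tau\la)^{-1}$ into a regular part (asymptotically multiplication by $(\widetilde f_u-\ep\tau\la)^{-1}$) and the rank-one part produced by $\zeta^{\ep}$, and letting $\ep\to0^{+}$, this collapses to a scalar problem for $\psi$ on $(0,\ell)$ with potential $\widetilde g_v-\widetilde g_u\widetilde f_v/\widetilde f_u=\det[\pa(f,g)/\pa(u,v)]/\widetilde f_u$ evaluated on $(U^{*},V^{*})$, coupled to a matching relation at $x^{*}$ that carries the factor $(\ep\tau\la-\zeta^{\ep})^{-1}$. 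That potential is negative, since $\det[\pa(f,g)/\pa(u,v)]>0$ and $\widetilde f_u<0$ on $\mathscr R_{\pm}$ (Steps 4--5 of the proof of Theorem \ref{th.1-0}), so the ``bulk'' Sturm--Liouville part is stable and only the interface mode can destabilize. Keeping track of the scalings, the existence of a critical eigenvalue reduces to the solvability of a single analytic characteristic equation $\mathcal E^{\ep}(\la,\tau)=0$, and the SLEP estimates provide a uniform-in-$\ep$ spectral gap: every eigenvalue not captured by $\mathcal E^{\ep}$ stays in $\{\mathrm{Re}\,\la\le-\delta_{0}\}$ for a fixed $\delta_{0}>0$.

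It then remains to study $\mathcal E^{\ep}(\cdot\,,\tau)=0$ as a function of $\tau$. The part of $\mathcal E^{\ep}=0$ that governs the critical pair has, to leading order in $\ep$, the form of a quadratic $\tau\la^{2}+p^{\ep}(\tau)\,\la+q^{\ep}+o(1)=0$ whose constant term $q^{\ep}$ is bounded below by a positive quantity associated with $-M'(\widehat v)>0$ (Step 3 of the proof of Theorem \ref{th.1-0}) and the stable reduced $v$-operator; since then the product of the two relevant roots stays positive, no real root can cross $0$, and the only possible mechanism for a loss of stability is a Hopf bifurcation. For $\tau$ large one checks all critical roots have $\mathrm{Re}\,\la<0$, so $(\widetilde u,\widetilde v)$ is locally asymptotically stable; for $\tau$ small the fast $u$-equation forces $u$ onto the manifold of stationary $u$-layers, and the residual scalar dynamics $v_{t}=dv_{xx}+g(\widetilde u_{\ep}[v],v)$ possesses an unstable interior layer, so some root has $\mathrm{Re}\,\la>0$. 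Thus $p^{\ep}$ changes sign; one shows it does so exactly once, at a value $\tau_{c}^{\ep}>0$, and the leading-order quadratic structure forces the crossing to occur at a nonzero frequency $\omega_{c}$ of order $\sqrt{q^{\ep}/\tau_{c}^{\ep}}$ (nonzero because $q^{\ep}$ is bounded away from $0$). Transversality $\frac{d}{d\tau}\mathrm{Re}\,\la\big|_{\tau=\tau_{c}^{\ep}}\neq 0$, together with simplicity of $\pm i\omega_{c}$ and the usual nonresonance conditions, follows by implicit differentiation of $\mathcal E^{\ep}$ and the spectral gap. A standard Hopf bifurcation theorem for abstract parabolic equations then produces the periodic orbits branching off at $\tau=\tau_{c}^{\ep}$, while the sign of the crossing and the gap give stability of $(\widetilde u,\widetilde v)$ for $\tau\in(\tau_{c}^{\ep},+\infty)$ and instability for $\tau\in(0,\tau_{c}^{\ep})$.

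The hard part will be the rigorous SLEP reduction: one must control $L^{\ep}(\ep\tau\la)^{-1}$ uniformly in $\ep$ as $\la$ ranges over a fixed neighborhood of $i\R$, split off the single small eigenvalue $\zeta^{\ep}$ (and pin down its scaling in $\ep$) cleanly from the rest of the spectrum, and pass to the limit so as to obtain both $\mathcal E^{\ep}\to\mathcal E^{0}$ and the uniform gap; this is where the machinery of \cite{NF,NM} must be reworked for the Lengyel--Epstein kinetics, and the generalization is non-trivial because the associated scalar eigenvalue problems are not transparent. A second delicate point is confirming that the critical crossing is genuinely of Hopf type, i.e.\ $\omega_{c}\neq0$ together with transversality — abstractly this is not automatic — and it is precisely here that the concrete nullcline geometry recorded in Steps 2--5 of the proof of Theorem \ref{th.1-0} enters.
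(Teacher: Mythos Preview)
Your proposal follows essentially the same route as the paper, namely the SLEP reduction of \cite{NF,NM}, but there is a crucial difference in scope. The paper's own proof is a one-line citation: since conditions (A0)--(A5) of \cite{NM} were already verified in the proof of Theorem \ref{th.1-0}, Theorem 3.1 of \cite{NM} applies \emph{directly} and yields the unique $\tau_c^{\ep}$, the stability/instability dichotomy, the simplicity of the imaginary pair, and transversality, all at once. What you are sketching is essentially a reproof of \cite[Theorem 3.1]{NM} specialized to the Lengyel--Epstein nonlinearities, which is unnecessary here.

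In particular, your closing worry that ``the machinery of \cite{NF,NM} must be reworked for the Lengyel--Epstein kinetics, and the generalization is non-trivial'' is misplaced for this theorem: \cite{NM} is formulated abstractly under the hypotheses (A0)--(A5), and the whole point of the proof of Theorem \ref{th.1-0} was to verify these for $f(u,v)=\sigma^{-1}(a-u-4uv/(1+u^{2}))$ and $g(u,v)=u-uv/(1+u^{2})$. Once that is done, no further adaptation is required. Your heuristic quadratic $\tau\la^{2}+p^{\ep}(\tau)\la+q^{\ep}=0$ and the small-$\tau$ argument via the reduced $v$-dynamics are plausible caricatures of what the SLEP equation in \cite{NM} actually gives, but they are neither needed nor, as stated, rigorous; the actual SLEP characteristic equation (cf.\ \eqref{comFP-3} later in the paper for the coupled problem) is not a polynomial in $\la$.
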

\begin{proof}
	Our results are really the direct consequences of Theorem 3.1 in \cite{NM} since all the conditions $(A0)$-$(A5)$ in \cite{NM} are satisfied. For the convenience of our later discussions in next sections, we list the sketch of the proof below.
	The linearized system of \eqref{2.1} evaluated at $(\widetilde{u}(x;\epsilon),\widetilde{v}(x;\epsilon))$ is given by
	\begin{equation}\label{0L1}
		\begin{cases}
			\begin{split}
				\epsilon\tau\dfrac{\partial u}{\partial t}&=\epsilon^2u_{xx}+f_u^\ep u+f_v^\ep v,~&x\in(0,\ell),\;t>0,\\
				\dfrac{\partial v}{\partial t}&=dv_{xx}+g_u^\ep u+g_v^\ep v,~&x\in(0,\ell),\;t>0,\\
				u_x(x&,t)=v_x(x,t)=0,~&x=0,\ell,\;t>0,
			\end{split}
		\end{cases}
	\end{equation}
	where  
	\begin{equation}\label{fuvep}
		%\begin{cases}
		\begin{split}
			f_u^\ep:=&-\dfrac{1}{\sigma}\bigg(1+\dfrac{4(1-\widetilde u^2(x;\ep))\widetilde v(x;\ep)}{(1+\widetilde u^2(x;\ep))^2}\bigg),\\
			f_v^\ep:=&-\dfrac{4\widetilde u(x;\ep)}{\sigma(1+\widetilde u^2(x;\ep))},\;
			g_u^\ep:=1-\dfrac{(1-\widetilde u^2(x;\ep))\widetilde v(x;\ep)}{(1+\widetilde u^2(x;\ep))^2},\;g_v^\ep:=-\dfrac{\widetilde u(x;\ep)}{1+\widetilde u^2(x;\ep)}.
		\end{split}
		%\end{cases}
	\end{equation}
	
	The eigenvalue problem associated with \eqref{0L1} is given by
	\begin{equation}\label{2.12}
		\begin{cases}
			\begin{split}
				\begin{pmatrix}
					\ep^2\frac{d^2}{dx^2}+f_u^\ep&f_v^\ep\\
					g_u^\ep&d\frac{d^2}{dx^2}+g_v^\ep\\
				\end{pmatrix}
				\begin{pmatrix}
					\phi\\
					\psi
				\end{pmatrix}&=\la
				\begin{pmatrix}
					\ep\tau&0\\
					0&1
				\end{pmatrix}
				\begin{pmatrix}
					\phi\\
					\psi
				\end{pmatrix},\;x\in(0,\ell),\\
				\phi'(0)=\phi'(\ell)=\psi'(0)=\psi'(\ell)&=0,
			\end{split}
		\end{cases}
	\end{equation}
	where $(\phi,\psi)^T\in\big(H^2(0,\ell)\cap H^1_N(0,\ell)\big)\times\big(H^2(0,\ell)\cap H^1_N(0,\ell)\big)$, and
	\begin{equation*}
		H^1_N(0,\ell)=\left\lbrace \phi\in H^1(0,\ell)| \phi'(0)=\phi'(\ell)=0\right\rbrace.
	\end{equation*}
	
	By Theorem 3.1 in \cite{NM}, it follows that there exists a unique $\tau_c^\epsilon>0$ such that for any $\tau\in(\tau_c^\epsilon, \infty)$, all the eigenvalues of the eigenvalue problem \eqref{2.12} have strictly negative real parts, while for any $\tau\in(0,\tau_c^\epsilon)$,  the eigenvalue problem \eqref{2.12} has at least one eigenvalue with positive real part. In particular, at $\tau=\tau_c^\epsilon$, the eigenvalue problem \eqref{2.12} has a pair of simple purely imaginary eigenvalues and the transversality condition holds. So far, based on these observations, we can complete the proof.
\end{proof}

\section{Dynamics of the coupled system \eqref{C-LE-D}: the case of $\al\rightarrow+\infty$}
In this section, we shall consider the following non-delayed system
\begin{equation}\label{P-1}
	\begin{cases}
		\begin{aligned}
			\tau\dfrac{\partial u_i}{\partial t}&=\epsilon\dfrac{\partial^2 u_i}{\partial x^2}+\dfrac{1}{\sigma\epsilon}\bigg(a-u_i-\dfrac{4u_iv_i}{1+u_i^2}\bigg)+k_1(u_j-u_i),~&x\in(0,\ell),\;t>0,\\
			\dfrac{\partial v_i}{\partial t}&=d\dfrac{\partial^2 v_i}{\partial x^2}+u_i-\dfrac{u_iv_i}{1+u_i^2}+k_2(v_j-v_i),~&x\in(0,\ell),\;t>0,\\
			\dfrac{\partial u_i}{\partial x}&=\dfrac{\partial v_i}{\partial x}=0,~&x=0,\ell,\;\;\;t>0,\\
		\end{aligned}
	\end{cases}
\end{equation}
where $i,j\in\{1,2\}$ and $i\neq j$; $\epsilon\in(0,\epsilon_0)$, in which $\epsilon_0>0$ is stated in Theorems \ref{th.1-0} and \ref{th.1-1}. Clearly, $(u_1,v_1,u_2,v_2)=(\widetilde{u}(x;\epsilon),\widetilde{v}(x;\epsilon),\widetilde{u}(x;\epsilon),\widetilde{v}(x;\epsilon))$ is the symmetric steady state solution of system \eqref{P-1}, where $\widetilde{u}(x;\epsilon)$ and $\widetilde{v}(x;\epsilon)$ are stated in Theorem \ref{th.1-0}.

We would like to mention that the non-delayed system \eqref{P-1} is indeed the limiting system of system \eqref{C-LE-D} when $\al\rightarrow+\infty$. In fact, we can extend the domain of the weak kernel $F(t)$ to $(-\infty, +\infty)$ by defining 
\begin{equation*}
	\widetilde{F}(t):=
	\begin{cases}
		\begin{split}
			&F(t)=\al e^{-\al t},~&t\geq0,\\
			&0,~&t<0.
		\end{split}
	\end{cases}
\end{equation*}

Then, we have 
\begin{equation*}
	\int_{-\infty}^{+\infty} \widetilde{F}(s)ds=\int^{+\infty}_{0} F(s)ds=1,\;\lim\limits_{\alpha\to+\infty} \widetilde F(t)=\delta(t)=
	\begin{cases}
		\begin{split}
			&+\infty,\;&\text{if $t=0$},\\
			&0,\;&\text{if $t\neq 0$}.
		\end{split}
	\end{cases}	
\end{equation*}

Therefore, as $\alpha\to+\infty$, for $i=1,2$, we have
\begin{equation*}
	\int_{-\infty}^{t} F(t-s)u_i(x,s)ds=\int_{-\infty}^{\infty} \widetilde F(t-s)u_i(x,s)ds\to\int_{-\infty}^{\infty} \delta(s-t)u_i(x,s)ds=u_i(x,t),
\end{equation*}
where we use the fact that $\delta(t-s)=\delta(s-t)$. Thus, as $\al\rightarrow+\infty$, the limiting system of system \eqref{C-LE-D} is indeed the non-delayed system \eqref{P-1}.

The linearized system of \eqref{P-1} evaluated at $(\widetilde{u}(x;\epsilon),\widetilde{v}(x;\epsilon),\widetilde{u}(x;\epsilon),\widetilde{v}(x;\epsilon))$ is given by
\begin{equation}\label{LP-pl-1}
	\begin{cases}
		\begin{aligned}
			\tau\dfrac{\partial \phi_i}{\partial t}&=\epsilon\dfrac{\partial^2\phi_i}{\partial x^2}+\dfrac{1}{\epsilon}\big(f_u^\ep\phi_i+f_v^\ep\psi_i\big)+k_1(\phi_j-\phi_i),~&x\in(0,\ell),\;t>0,\\
			\dfrac{\partial \psi_i}{\partial t}&=d\dfrac{\partial^2\psi_i}{\partial x^2}+g_u^\ep\phi_i+g_v^\ep\psi_i+k_2(\psi_j-\psi_i),~&x\in(0,\ell),\;t>0,\\
			\dfrac{\partial \phi_i}{\partial x}&=\dfrac{\partial \psi_i}{\partial x}=0,~&x=0,\ell,\;\;\;t>0,\\
		\end{aligned}
	\end{cases}
\end{equation}
where $i,j\in\{1,2\}$ and $i\neq j$, and $f_u^\ep, f_v^\ep, g_u^\ep, g_v^\ep$ are precisely defined in \eqref{fuvep}.

For $i=1,2$, rewrite $\phi_i(x,t)$ and $\psi_i(x,t)$ in the following form (separation of variables)
\begin{equation*}
	\begin{split}
		\phi_1(x,t):=&w_1(x)e^{\la t},\; \phi_2(x,t):=w_2(x)e^{\la t},\;
		\psi_1(x,t):=z_1(x)e^{\la t},\;\;\psi_2(x,t):=z_2(x)e^{\la t},
	\end{split}
\end{equation*}
where $\la$ is to be determined later.

Following \cite{LE2, TMN}, by introducing
\begin{equation}\label{nform}
	\begin{split}
		w_s(x):=&\frac{1}{2}(w_1(x)+w_2(x)),\; w_a(x):=\frac{1}{2}(w_1(x)-w_2(x)),\\
		z_s(x):=&\frac{1}{2}(z_1(x)+z_2(x)),\;\;\;\;z_a(x):=\frac{1}{2}(z_1(x)-z_2(x)),
	\end{split}
\end{equation}
we can reduce the linearized system \eqref{LP-pl-1} to the following two eigenvalue problems
\begin{equation}\label{P-L-2a}
	\begin{cases}
		\begin{split}
			&\ep^2w_s''(x)+f_u^\ep w_s(x)+f_v^\ep z_s(x)=\ep\tau\la w_s(x),~&x\in(0,\ell),\\
			&dz_s''(x)+g_u^\ep w_s(x)+g_v^\ep z_s(x)=\la z_s(x),~&x\in(0,\ell),\\
			&w_s'(0)=w_s'(\ell)=z_s'(0)=z_s'(\ell)=0,
		\end{split}
	\end{cases}
\end{equation}
and
\begin{equation}\label{P-L-2b}
	\begin{cases}
		\begin{split}
			&\ep^2w_a''(x)+(f_u^\ep-2\ep k_1) w_a(x)+f_v^\ep z_a(x)=\ep\tau\la w_a(x),~&x\in(0,\ell),\\
			&dz_a''(x)+g_u^\ep w_a(x)+(g_v^\ep-2k_2) z_a(x)=\la z_a(x),~&x\in(0,\ell),\\
			&w_a'(0)=w_a'(\ell)=z_a'(0)=z_a'(\ell)=0.
		\end{split}
	\end{cases}
\end{equation}

Clearly, the eigenvalue problem \eqref{P-L-2a} is exactly \eqref{2.12} in Section 2, and the distributions of its eigenvalues are clear (see the proof of Theorem \ref{th.1-1} in this paper, or Theorem 3.1 in \cite{NM}). It remains to consider the eigenvalue problem \eqref{P-L-2b}.

Following \cite{NF, NM}, by rewriting
\begin{equation*}
	\la=:\la_R+i\la_I,\;w_a=:w_R+iw_I,\;z_a=:z_R+iz_I,
\end{equation*}
we can reduce the eigenvalue problem \eqref{P-L-2b} to the following equations
\begin{equation}\label{P-L-3}
	\begin{aligned}
		\begin{cases}
			(\ep^2\dfrac{d^2}{dx^2}+f_u^\ep-2\ep k_1) w_R+f_v^{\ep}z_R=\epsilon\tau(\la_R w_R-\la_I w_I),\\
			(\ep^2\dfrac{d^2}{dx^2}+f_u^\ep-2\ep k_1)w_I+f_v^{\ep}z_I=\epsilon\tau(\la_R w_I+\la_I w_R),\\
			(d\dfrac{d^2}{dx^2}+g_v^\ep-2k_2)z_R+g_u^{\ep}w_R=\la_R z_R-\la_I z_I,\\
			(d\dfrac{d^2}{dx^2}+g_v^\ep-2k_2)z_I+g_u^{\ep}w_I=\la_I z_R+\la_R z_I,
		\end{cases}
	\end{aligned}
\end{equation}
subject to homogeneous Neumann boundary conditions.

Define
\begin{equation*}
	\mathcal{L}^{\ep}:=\ep^2\dfrac{d^2}{dx^2}+f_u^\ep-2\ep k_1.
\end{equation*}

Choose $\mu^*$ satisfying
\begin{equation}\label{mu^*}
	0<\mu^*<\inf\limits_{x\in(0,\ell)}\frac{f_u^*g_v^*-f_v^*g_u^*}{-f_u^*},
\end{equation} 
where 
\begin{equation*}
	\begin{split}
		f_u^*:=&f_u(U^*, V^*),~f_v^*:=f_v(U^*, V^*),\;
		g_u^*:=g_u(U^*, V^*),~g_v^*:=g_v(U^*, V^*),
	\end{split}
\end{equation*}
in which $(U^*,V^*):=(U^*(x), V^*(x))$ is stated in Theorem \ref{th.1-0}, and $(f_u^*g_v^*-f_v^*g_u^*)/(-f_u^*)>0$ (see the proof of Theorem \ref{th.1-0}). 

For sufficiently small $\ep>0$, by the similar argument of Lemma 2.1 in \cite{NF} and Remark 2.2 in \cite{NM},  $(\mathcal{L}^{\ep}-\ep\tau\la)^{-1}$ is well-defined for $\text{Re}\;\la>-\mu^*$. In the following, we always assume that $\text{Re}\;\la>-\mu^*$. In case of $\text{Re}\;\la\leq-\mu^*$, all the eigenvalues of \eqref{LP-pl-1} have negative real parts, which do not affect the stability of $(\widetilde{u}(x;\epsilon),\widetilde{v}(x;\epsilon),\widetilde{u}(x;\epsilon),\widetilde{v}(x;\epsilon))$.	

Then, by solving the first two equations of \eqref{P-L-3} with respect to $w_R$ and $w_I$, we have
\begin{equation*}
	\begin{split}
		w_R&=\big[I+(\ep\tau\la_I)^2(\mathcal{L}^{\ep}-\ep\tau\la_R)^{-2}\big]^{-1}\big[\ep\tau\la_I(\mathcal{L}^{\ep}-\ep\tau\la_R)^{-2}(f_v^{\ep}z_I)-(\mathcal{L}^{\ep}-\ep\tau\la_R)^{-1}(f_v^{\ep}z_R)\big],\\
		w_I&=-\big[I+(\ep\tau\la_I)^2(\mathcal{L}^{\ep}-\ep\tau\la_R)^{-2}\big]^{-1}\big[(\mathcal{L}^{\ep}-\ep\tau\la_R)^{-1}(f_v^{\ep}z_I)+\ep\tau\la_I(\mathcal{L}^{\ep}-\ep\tau\la_R)^{-2}(f_v^{\ep}z_R)\big],
	\end{split}
\end{equation*}
where $I$ is the identity operator.

By Lemma \ref{lemNF1} (Appendix), the eigenvalue problem
\begin{equation*}
	\begin{cases}
		\begin{split}
			\ep^2\dfrac{d^2\phi}{dx^2}+f_u^\ep\phi&=\mu\phi,\;x\in(0,\ell),\\
			\phi'(0)=\phi'(\ell)&=0,
		\end{split}
	\end{cases}
\end{equation*}
has complete orthonormal pairs $($in $L^2(0,\ell)$ sense$)$ of eigenvalues and eigenfunctions $\{(\mu_n^{\ep}, \phi_n^{\ep}(x))\}_{n=0}^{\infty}$. 

By eigenfunction expansions, we have
\begin{equation}\label{WRIex-1}
	w_R=\sum\limits_{n \ge 0}w_R^n\phi_n^{\ep}=w_R^0\phi_0^{\ep}+w_R^\dag,\;
	w_I=\sum\limits_{n \ge 0}w_I^n\phi_n^{\ep}=w_I^0\phi_0^{\ep}+w_I^\dag,
\end{equation}
where $w_R^\dag:=\sum\limits_{n \ge 1}w_R^n\phi_n^{\ep}$, $w_I^\dag:=\sum\limits_{n \ge 1}w_I^n\phi_n^{\ep}$, in which
\begin{equation*}
	\begin{split}
		w_R^n=\ds\frac{-\mu_n^{\ep}+2\ep k_1+\ep\tau\la_R}{(\mu_n^{\ep}-2\ep k_1-\ep\tau\la_R)^2+(\ep\tau\la_I)^2}\big\langle {f_v^\varepsilon {z_R},\phi _n^\varepsilon }\big\rangle+\ds\frac{\ep\tau\la_I}{(\mu_n^{\ep}-2\ep k_1-\ep\tau\la_R)^2+(\ep\tau\la_I)^2}\big\langle f_v^\varepsilon {z_I},\phi _n^\varepsilon\big\rangle,\\
		w_I^n=\ds\frac{-\mu_n^{\ep}+2\ep k_1+\ep\tau\la_R}{(\mu_n^{\ep}-2\ep k_1-\ep\tau\la_R)^2+(\ep\tau\la_I)^2}\big\langle {f_v^\varepsilon {z_I},\phi _n^\varepsilon }\big\rangle-\ds\frac{\ep\tau\la_I}{(\mu_n^{\ep}-2\ep k_1-\ep\tau\la_R)^2+(\ep\tau\la_I)^2}\big\langle f_v^\varepsilon {z_R},\phi _n^\varepsilon\big\rangle.
	\end{split}
\end{equation*}
Hereafter, $\left\langle\cdot,\cdot \right\rangle $ denotes the inner product in $L^2(0,\ell)$ sense.

By \eqref{WRIex-1}, we have
\begin{equation*}
	\begin{split}
		w_R^\dag&=\big[I+(\ep\tau\la_I)^2(\mathcal{L}_1^{\ep}-\ep\tau\la_R)^{-2}\big]^{\dag}\big[\ep\tau\la_I(\mathcal{L}_1^{\ep}-\ep\tau\la_R)^{2\dag}(f_v^{\ep}z_I)-(\mathcal{L}_1^{\ep}-\ep\tau\la_R)^{\dag}(f_v^{\ep}z_R)\big],\\
		w_I^\dag&=-\big[I+(\ep\tau\la_I)^2(\mathcal{L}_1^{\ep}-\ep\tau\la_R)^{-2}\big]^{\dag}\big[(\mathcal{L}_1^{\ep}-\ep\tau\la_R)^{\dag}(f_v^{\ep}z_I)+\ep\tau\la_I(\mathcal{L}_1^{\ep}-\ep\tau\la_R)^{2\dag}(f_v^{\ep}z_R)\big],
	\end{split}
\end{equation*}
where
\begin{equation*}
	(\mathcal{L}_1^{\ep}-\ep\tau\la_R)^{\dag}=\sum\limits_{n \ge 1}\dfrac{\left\langle \cdot,\phi_n^{\ep}\right\rangle }{\mu_n^{\ep}-2\ep k_1-\ep\tau\la_R}\phi_n^{\ep},\;\;(\mathcal{L}_1^{\ep}-\ep\tau\la_R)^{2\dag}=\sum\limits_{n \ge 1}\dfrac{\left\langle \cdot,\phi_n^{\ep}\right\rangle }{(\mu_n^{\ep}-2\ep k_1-\ep\tau\la_R) ^2}\phi_n^{\ep},
\end{equation*}
and
\begin{equation*}
	\big[I+(\ep\tau\la_I)^2(\mathcal{L}_1^{\ep}-\ep\tau\la_R)^{-2}\big]^{\dag}=\sum\limits_{n \ge 1}\dfrac{(\mu_n^{\ep}-2\ep k_1-\ep\tau\la_R)^2 }{(\mu_n^{\ep}-2\ep k_1-\ep\tau\la_R)^2+(\ep\tau\la_I)^2}\left\langle \cdot,\phi_n^{\ep}\right\rangle\phi_n^{\ep}.
\end{equation*}

Substituting \eqref{WRIex-1} into the last two equations of \eqref{P-L-3},  we have
\begin{equation}\label{mz-1}
		\begin{pmatrix}
			T_{\la_R}^\ep&-\lambda_I + \ep\tau\lambda _IS^\ep\\
			\lambda _I + \ep\tau\lambda _IS^\ep&T_{\la_R}^\ep
		\end{pmatrix}
		\begin{pmatrix}
			z_R\\
			z_I
		\end{pmatrix}=-\dfrac{g_u^\ep\phi _0^\ep}{\sqrt{\varepsilon}} {M^\varepsilon }\begin{pmatrix}
			\big\langle z_R,  \frac{f_v^\ep\phi _0^\varepsilon}{\sqrt{\varepsilon}} \big\rangle\\
			\big\langle z_I,  \frac{f_v^\ep\phi _0^\varepsilon}{\sqrt{\varepsilon}} \big\rangle\\
		\end{pmatrix},
\end{equation}
where
\begin{equation}\label{TSM}
	\begin{cases}
		\begin{aligned}
			T_{\la_R}^\ep:=&-d\frac{d^2}{dx^2}-g_v^{\ep}+2k_2+\la_R+g_u^{\ep}\big[I+(\ep\tau\la_I)^2(\mathcal{L}_1^{\ep}-\ep\tau\la_R)^{-2}\big]^{\dag}(\mathcal{L}_1^{\ep}-\ep\tau\la_R)^{\dag}f_v^{\ep},\\
			S^\ep:=&-g_u^{\ep}\big[I+(\ep\tau\la_I)^2(\mathcal{L}_1^{\ep}-\ep\tau\la_R)^{-2}\big]^{\dag}(\mathcal{L}_1^{\ep}-\ep\tau\la_R)^{2\dag}f_v^{\ep},\\
			M^\ep:=&\begin{pmatrix}
				\frac{\rho(\ep)-2 k_1-\tau\la_R}{(\rho(\ep)-2 k_1-\tau\la_R)^2+(\tau\la_I)^2}&-\frac{\tau\lambda _I}{(\rho(\ep)-2 k_1-\tau\la_R)^2+(\tau\la_I)^2}\\
				\frac{\tau\lambda _I}{(\rho(\ep)-2 k_1-\tau\la_R)^2+(\tau\la_I)^2}&\frac{\rho(\ep)-2 k_1-\tau\la_R}{(\rho(\ep)-2 k_1-\tau\la_R)^2+(\tau\la_I)^2}
			\end{pmatrix}.
		\end{aligned}
	\end{cases}
\end{equation}

By Lemma \ref{lemNF1} (Appendix), we have 
\begin{equation*}
	\begin{split}
		\lim\limits_{\ep\to 0^+}\ds\frac{f_v^{\ep}\phi_0^{\ep}}{\sqrt{\ep}}&=-c_1^*\delta(x - x^*),\;
		\lim\limits_{\ep\to 0^+}\ds\frac{g_u^{\ep}\phi_0^{\ep}}{\sqrt{\ep}}=c_2^*\delta(x - x^*),\;\text{in $H^{-1}$ sense,}
	\end{split}
\end{equation*}
where
\begin{equation}\label{c1c2}
		c_1^*:=-\kappa^*M'(\widehat{v})>0,\;c_2^*:=\kappa^*\left[h_+(\widehat{v})-h_-(\widehat{v})+\left(\frac{h_-(\widehat{v})}{1+(h_-(\widehat{v}))^2}-\frac{h_+(\widehat{v})}{1+(h_+(\widehat{v}))^2}\right)\widehat{v}\right]>0.
\end{equation}

Set $\lim\limits_{\ep\to 0^+}\la=:\la^*=:\la_R^*+i\la_I^*$ and $\lim\limits_{\ep\to 0^+}z=:z^*=:z_R^*+iz_I^*$. Then, as $\epsilon\rightarrow0^+$, \eqref{mz-1} can be reduced to 
\begin{equation}\label{mz^*-1}
	\begin{pmatrix}
		T_{\la_R^*}&-\lambda _I^*\\
		\lambda _I^*&T_{\la_R^*}
	\end{pmatrix}\begin{pmatrix}
		{{z_R^*}}\\
		{{z_I^*}}
	\end{pmatrix}=c_1^*c_2^*\delta(x-x^*)M^*
	\begin{pmatrix}
		\left\langle z_R^*, \delta(x - x^*)\right\rangle\\
		\left\langle z_I^*, \delta(x - x^*)\right\rangle
	\end{pmatrix},\;\text{in $H^{-1}$ sense,}
\end{equation}
where
\begin{equation*}
	\begin{cases}
		\begin{split}
			T_{\la_R^*}:=&-d\frac{d^2}{dx^2}-\frac{f_u^*g_v^*-f_v^*g_u^*}{f_u^*}+\la_R^*+2 k_2,\\
			M^*:=&\begin{pmatrix}
				\frac{\rho_0^{*}-2 k_1-\tau\la_R^*}{(\rho_0^{*}-2 k_1-\tau\la_R^*)^2+(\tau\la_I^*)^2}&-\frac{\tau\lambda _I^*}{(\rho_0^{*}-2 k_1-\tau\la_R^*)^2+(\tau\la_I^*)^2}\\
				\frac{\tau\lambda _I^*}{(\rho_0^{*}-2 k_1-\tau\la_R^*)^2+(\tau\la_I^*)^2}&\frac{\rho_0^{*}-2 k_1-\tau\la_R^*}{(\rho_0^{*}-2 k_1-\tau\la_R^*)^2+(\tau\la_I^*)^2}
			\end{pmatrix}.
		\end{split}
	\end{cases}
\end{equation*}

If for any given $h\in H^{-1}(I)$, $z$ is uniquely determined by
\begin{equation*}
	\left\langle T_{\la_R}^\ep z,\psi\right\rangle =\left\langle h,\psi\right\rangle \;\;\;\;( \forall \psi\in H_N^1(I)),
\end{equation*}
then we define the generalized inverse operator of $T_{\la_R}^{\ep}$ as mapping $(T_{\la_R}^{\ep})^{-1}: H^{-1}(I)\to H_N^1(I)$ such that $(T_{\la_R}^{\ep})^{-1}h=z$. The other generalized operators in this paper are defined similarly.

By \cite{NF,NM,TMN}, $T_{\la_R^*}$ has a generalized inverse operator $T_{\la_R^*}^{-1}$ when $\la_R^* + 2 k_2 > -\mu^*$. In the following, we always treat the case when $\la_R^* + 2 k_2 > -\mu^*$. In case of $\la_R^* + 2 k_2 \leq -\mu^*$, all the eigenvalues of \eqref{mz^*-1} have negative real parts, which do not affect the stability of the solution in the sense of $\ep\to0^+$.

Applying $T_{\la_R^*}^{-1}$ to system \eqref{mz^*-1} yields
\begin{equation*}
	\begin{pmatrix}
		I&-\lambda _I^*T_{\la_R^*}^{-1}\\
		\lambda _I^*T_{\la_R^*}^{-1}&I
	\end{pmatrix}
	\begin{pmatrix}
		z_R^*\\
		z_I^*
	\end{pmatrix}
	=c_1^*c_2^*T_{\la_R^*}^{-1}\delta(x-x^*)M^*
	\begin{pmatrix}
		{\left\langle {{z_R^*}, \delta(x - x^*)} \right\rangle }\\
		{\left\langle {{z_I^*}, \delta(x - x^*)} \right\rangle }
	\end{pmatrix},\;\text{in $H^{-1}$ sense},
\end{equation*}
or equivalently
\begin{equation}\label{mz^*-1-b}
	\begin{pmatrix}
		z_R^*\\
		z_I^*
	\end{pmatrix}
	= c_1^*c_2^*G
	\begin{pmatrix}
		I&  \lambda _I^*T_{\la_R^*}^{-1}\\
		-\lambda _I^*T_{\la_R^*}^{-1}&I
	\end{pmatrix}
	T_{\la_R^*}^{-1}\delta(x - x^*){M^* }
	\begin{pmatrix}
		{\left\langle {{z_R^*}, \delta(x - x^*)} \right\rangle }\\
		{\left\langle {{z_I^*}, \delta(x - x^*)} \right\rangle }
	\end{pmatrix},\;\text{in $H^{-1}$ sense},
\end{equation}
where $G:=\big[I+(\la_I^*)^2(T_{\la_R^*}^{-1})^2\big]^{-1}$.

Taking the inner product with $\delta(x - x^*)$ in \eqref{mz^*-1-b}, we have
\begin{equation}\label{mz^*-1-d}
	\mathcal N\begin{pmatrix}
		{\left\langle {{z_R^*}, \delta(x - x^*)} \right\rangle }\\
		{\left\langle {{z_I^*}, \delta(x - x^*)} \right\rangle }
	\end{pmatrix}=0,
\end{equation}
where
\begin{equation*}
	\mathcal N:=\left\langle c_1^*c_2^*G\begin{pmatrix}
		I&  \lambda _I^*T_{\la_R^*}^{-1}\\
		-\lambda _I^*T_{\la_R^*}^{-1}&I
	\end{pmatrix}T_{\la_R^*}^{-1} \delta(x - x^*) M^*,\;\delta(x - x^*) \right\rangle-I.
\end{equation*}

Let $\left\{(\gamma_n, \psi_n)\right\}_{n=0}^\infty$ be the complete orthonormal pairs of eigenvalues and eigenfunctions of the operator $-d\frac{d^2}{dx^2}-(f_u^*g_v^*-f_v^*g_u^*)/f_u^*$ subject to the homogeneous Neumann boundary condition in $L^2(0,\ell)$ sense. Moreover, $\gamma_n>0$ holds for any integer $n\ge0$. 

Then, by using lengthy but straightforward eigenfunction expansions, we can reduce $\mathcal N$ to
\begin{equation}\label{mat-N-P}
	\mathcal N=\begin{pmatrix}
		\frac{(\rho_0^{*}-2 k_1-\tau\la_R^*)X+\tau(\la_I^*)^2 Y}{(\rho_0^{*}-2 k_1-\tau\la_R^*)^2+(\tau\la_I^*)^2}-1  & -\lambda _I^*\frac{\tau X-(\rho_0^{*}-2 k_1-\tau\la_R^*)Y}{(\rho_0^{*}-2 k_1-\tau\la_R^*)^2+(\tau\la_I^*)^2}\\
		\lambda _I^*\frac{\tau X-(\rho_0^{*}-2 k_1-\tau\la_R^*)Y}{(\rho_0^{*}-2 k_1-\tau\la_R^*)^2+(\tau\la_I^*)^2} &  \frac{(\rho_0^{*}-2 k_1-\tau\la_R^*)X+\tau(\la_I^*)^2 Y}{(\rho_0^{*}-2 k_1-\tau\la_R^*)^2+(\tau\la_I^*)^2}-1
	\end{pmatrix},
\end{equation}
where
\begin{equation}\label{AB}
	\begin{split}
		X&=X(\la_R^*,(\la_I^*)^2, k_2):=\left\langle {c_1^*c_2^*G{(T_{\la_R^*})^{-1}}\delta(x - x^*),{\delta(x - x^*)}} \right\rangle=\sum\limits_{n \ge 0} \dfrac{(\gamma _n + \lambda _R^* + 2 k_2)c_1^*c_2^*\psi_n ^2(x^*)}{(\gamma _n + \lambda _R^* + 2 k_2)^2 + (\lambda _I^*)^2},\\
		Y&=Y(\la_R^*,(\la_I^*)^2, k_2):=\left\langle {c_1^*c_2^*G{(T_{\la_R^*})^{-2}}\delta(x - x^*),{\delta(x - x^*)}} \right\rangle=\sum\limits_{n \ge 0} \dfrac{c_1^*c_2^*\psi_n ^2(x^*)}{(\gamma _n + \lambda _R^* +2 k_2)^2 + (\lambda _I^*)^2}.
	\end{split}    
\end{equation}

To expect the existence of nonzero solution of \eqref{mz^*-1-d}, one requires $\det (\mathcal N)=0$, or equivalently
\begin{equation}\label{slep-p}
	\begin{cases}
		\begin{split}
			&\dfrac{\rho_0^{*}-2 k_1-\tau\la_R^*}{(\rho_0^{*}-2 k_1-\tau\la_R^*)^2+(\tau\la_I^*)^2}X + \dfrac{\tau {{(\lambda _I^*)}^2}}{(\rho_0^{*}-2 k_1-\tau\la_R^*)^2+(\tau\la_I^*)^2}Y = 1,\\
			&\dfrac{{\tau \lambda _I^*}}{(\rho_0^{*}-2 k_1-\tau\la_R^*)^2+(\tau\la_I^*)^2}X - \dfrac{\lambda _I^*(\rho_0^{*}-2 k_1-\tau\la_R^*)}{(\rho_0^{*}-2 k_1-\tau\la_R^*)^2+(\tau\la_I^*)^2}Y = 0,
		\end{split}
	\end{cases}
\end{equation}
which is called the real singular limit eigenvalue problem (SLEP) of \eqref{P-L-2b}. 

\begin{proposition}\label{prop.AB}
	Let $X(\la_R^*,(\la_I^*)^2, k_2)$ and $Y(\la_R^*,(\la_I^*)^2, k_2)$ be defined in \eqref{AB}. Then, 
	\begin{equation*}
		\begin{split}
			&X(0,0,0)=\sum\limits_{n \ge 0} \dfrac{c_1^*c_2^*}{\gamma _n }\psi_n ^2(x^*)>\rho_0^*,\;\;Y(0,0,0)=\sum\limits_{n \ge 0} \dfrac{c_1^*c_2^*}{\gamma _n^2 }\psi_n ^2(x^*)>0,\\
			&\lim\limits_{k_2\to+\infty}X(\la_R^*,(\la_I^*)^2, k_2)=\lim\limits_{k_2\to+\infty}Y(\la_R^*,(\la_I^*)^2, k_2)=0,\\
			&\lim\limits_{(\la_I^*)^2\to+\infty}X(\la_R^*,(\la_I^*)^2, k_2)=\lim\limits_{(\la_I^*)^2\to+\infty}Y(\la_R^*,(\la_I^*)^2, k_2)=0,\\
			&\ds\frac{\pa X}{\pa (\la_I^*)^2}<0,\;\;\ds\frac{\pa Y}{\pa (\la_I^*)^2}<0,\;\;\frac{1}{2}\ds\frac{\pa X}{\pa k_2}\bigg|_{\la_I^*=0}=\ds\frac{\pa X}{\pa \la_R^*}\bigg|_{\la_I^*=0}<0,\;\;\;\;\frac{1}{2}\ds\frac{\pa Y}{\pa k_2}=\ds\frac{\pa Y}{\pa \la_R^*}<0. 
		\end{split}
	\end{equation*}
	In particular, it holds true that
	\begin{equation*}
			\ds\frac{\pa X}{\pa \la_R^*}(0,0,k_2)=\frac{1}{2}\ds\frac{\pa X}{\pa k_2}(0,0,k_2)=-Y(0,0,k_2)<0.
	\end{equation*}
\end{proposition}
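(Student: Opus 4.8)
The proof proposal is as follows.

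\textbf{Overall strategy.} All the stated properties are assertions about the two explicit series
\[
X = \sum_{n\ge0}\frac{(\gamma_n+\la_R^*+2k_2)\,c_1^*c_2^*\psi_n^2(x^*)}{(\gamma_n+\la_R^*+2k_2)^2+(\la_I^*)^2},\qquad
Y = \sum_{n\ge0}\frac{c_1^*c_2^*\psi_n^2(x^*)}{(\gamma_n+\la_R^*+2k_2)^2+(\la_I^*)^2},
\]
so the whole proposition reduces to elementary monotonicity/limit analysis of these series, term by term, justified by uniform convergence. The plan is to first record the facts I may assume: $c_1^*,c_2^*>0$ from \eqref{c1c2}, $\gamma_n>0$ for all $n\ge0$ (stated just before \eqref{mat-N-P}), the Weyl asymptotics $\gamma_n\sim C n^2$ and the uniform bound $\psi_n^2(x^*)=O(1)$ coming from Lemma \ref{lemNF1} in the Appendix, so that for $\la_R^*+2k_2>-\mu^*$ (the standing assumption) both series converge, and so do the term-by-term differentiated series — this legitimizes differentiating under the summation sign everywhere below.

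\textbf{Step-by-step.} First I would evaluate at $(\la_R^*,(\la_I^*)^2,k_2)=(0,0,0)$: then $X(0,0,0)=\sum_n c_1^*c_2^*\psi_n^2(x^*)/\gamma_n$ and $Y(0,0,0)=\sum_n c_1^*c_2^*\psi_n^2(x^*)/\gamma_n^2$, both sums of strictly positive terms, hence $>0$; the strict inequality $X(0,0,0)>\rho_0^*$ is precisely the definition of $\rho_0^*$ given in \eqref{App1} of the Appendix (this is where the Appendix formula enters — $\rho_0^*$ is built from exactly this series, or a lower bound of it), so I would quote that definition. Second, the two limit statements: as $k_2\to+\infty$ each denominator $\sim(2k_2)^2\to\infty$ while numerators stay bounded, and similarly as $(\la_I^*)^2\to+\infty$ the denominators blow up; combined with a dominating convergent tail (the $n\ge1$ terms are bounded by $C\psi_n^2(x^*)/\gamma_n^2$ uniformly), dominated convergence for series gives both limits $=0$. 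Third, the monotonicity in $(\la_I^*)^2$: writing $s=(\la_I^*)^2$ and $a_n=\gamma_n+\la_R^*+2k_2>0$, we have $\partial_s[a_n/(a_n^2+s)]=-a_n/(a_n^2+s)^2<0$ and $\partial_s[1/(a_n^2+s)]=-1/(a_n^2+s)^2<0$, so $\partial X/\partial s<0$ and $\partial Y/\partial s<0$ term by term. Fourth, the chain-rule identities $\tfrac12\partial X/\partial k_2=\partial X/\partial\la_R^*$ and $\tfrac12\partial Y/\partial k_2=\partial Y/\partial\la_R^*$ are immediate since $\la_R^*$ and $2k_2$ enter $X,Y$ only through the combination $\la_R^*+2k_2$ (hence $\partial_{k_2}=2\,\partial_{\la_R^*}$); then I compute $\partial X/\partial\la_R^*=\sum_n c_1^*c_2^*\psi_n^2(x^*)\,\partial_{a}[a/(a^2+s)]=\sum_n c_1^*c_2^*\psi_n^2(x^*)\,(s-a_n^2)/(a_n^2+s)^2$, which at $\la_I^*=0$ (i.e. $s=0$) reduces to $-\sum_n c_1^*c_2^*\psi_n^2(x^*)/a_n^2=-Y|_{\la_I^*=0}<0$, giving both $\partial X/\partial\la_R^*|_{\la_I^*=0}<0$ and the final displayed identity $\partial_{\la_R^*}X(0,0,k_2)=\tfrac12\partial_{k_2}X(0,0,k_2)=-Y(0,0,k_2)<0$; likewise $\partial Y/\partial\la_R^*=\sum_n c_1^*c_2^*\psi_n^2(x^*)\cdot(-2a_n)/(a_n^2+s)^2<0$ for all $s\ge0$ (no restriction to $\la_I^*=0$ needed, matching the statement).

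\textbf{Main obstacle.} None of the individual computations is hard; the only genuinely delicate point is the \emph{justification of termwise differentiation and of the interchange of limit and summation}, which requires the uniform-in-$n$ control $\psi_n^2(x^*)=O(1)$ together with $\gamma_n\to\infty$ at a polynomial rate, both of which come from the standard Sturm--Liouville spectral theory recalled in Lemma \ref{lemNF1}; I would state this uniform bound explicitly as the key lemma input and then the rest is the routine bookkeeping sketched above. A secondary subtlety is making sure the sign claims that are asserted only ``at $\la_I^*=0$'' (namely $\partial X/\partial\la_R^*<0$) are not over-claimed — for $X$ the sign of $\partial_{\la_R^*}X$ genuinely depends on whether $s<a_n^2$, so the restriction $\la_I^*=0$ (or more generally $(\la_I^*)^2<\gamma_0^2$) is essential and I would flag that, whereas for $Y$ the derivative is negative unconditionally.
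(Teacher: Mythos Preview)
Your termwise computations for the limits, the monotonicity in $(\la_I^*)^2$, and the chain-rule identities relating $\partial/\partial k_2$ to $\partial/\partial\la_R^*$ are all correct and match the paper's intended (but omitted) argument. The paper's own proof is essentially ``see \cite{NM} for the first claim; the rest is routine eigenfunction-expansion calculus,'' so you have filled in the routine part accurately, including the nice observation that $\partial X/\partial\la_R^*<0$ genuinely requires the restriction $\la_I^*=0$ whereas $\partial Y/\partial\la_R^*<0$ holds unconditionally.

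The one gap is your treatment of $X(0,0,0)>\rho_0^*$. You write that this ``is precisely the definition of $\rho_0^*$ given in \eqref{App1},'' but if you inspect \eqref{App1} you will see that $\rho_0^*$ is defined as
\[
\rho_0^*=\frac{(\kappa^*)^2}{d}\,M'(\widehat v)\int_0^{x^*}g(U^*,V^*)\,dx,
\]
which is \emph{not} the series $\sum_n c_1^*c_2^*\psi_n^2(x^*)/\gamma_n$, nor visibly a lower bound for it. The inequality is a genuine result of Nishiura--Mimura (the paper simply cites \cite{NM}), obtained there by relating both quantities back to the layer-position equation and the SLEP reduction; it does not fall out of the definitions in this paper alone. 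You should either reproduce that argument or, as the paper does, cite the reference. A minor side remark: the Weyl asymptotics $\gamma_n\sim Cn^2$ and the uniform bound on $\psi_n^2(x^*)$ that you invoke are standard Sturm--Liouville facts, but they are not actually contained in Lemma~\ref{lemNF1}, which concerns the different operator $\ep^2\,d^2/dx^2+f_u^\ep$; cite general Sturm--Liouville theory for the operator $-d\,d^2/dx^2-(f_u^*g_v^*-f_v^*g_u^*)/f_u^*$ instead.
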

\begin{proof}
	Firstly, $X(0,0,0)>\rho_0^*$ is proved by \cite{NM}. Secondly, the rest of the results can be proved by using the eigenfunction expansions of \eqref{AB}, and we shall omit the proofs.
\end{proof}

Note that to expect zero eigenvalue of \eqref{slep-p}, we need to require that $2 k_1\neq\rho_0^*$. The following results are useful for our later stability analysis for $\epsilon\in(0,\epsilon_0)$.
\begin{proposition}\label{p*ei} Let $c_1^*,c_2^*, \rho_0^{*}$ be defined \eqref{c1c2} and \eqref{App1}, respectively, and let $\left\{(\gamma_n, \psi_n)\right\}_{n=0}^\infty$ be the complete orthonormal pairs of eigenvalues and eigenfunctions of the operator $-d\frac{d^2}{dx^2}-(f_u^*g_v^*-f_v^*g_u^*)/f_u^*$ subject to the homogeneous Neumann boundary condition in the $L^2(0,\ell)$ sense. Suppose that $\tau\in(\tau^*,+\infty)$ holds, where
	\begin{equation}\label{tau*}
		\tau^*:=\sum\limits_{n \ge 0} \dfrac{c_1^*c_2^*}{\gamma _n^2 }\psi_n ^2(x^*)=Y(0,0,0).
	\end{equation}
	Then, the following conclusions hold true:
	\begin{enumerate}
		\item For any $ k_1\in(0,\rho_0^*/2)$, there exists a unique curve $\Gamma=\left\lbrace(k_1,k_2) \big| k_2=\xi(k_1), 0< k_1<\rho_0^*/2 \right\rbrace$
		in the first quadrant of $(k_1, k_2)$ plane, such that for any $(k_1,k_2)\in\Gamma$, the limiting $($$\epsilon\rightarrow0^+$$)$ eigenvalue problem \eqref{mz^*-1} has zero as its eigenvalue, where $k_2=\xi(k_1)$ is a positive, continuous and increasing function of $ k_1$ satisfying 
		\begin{equation}\label{fff1}
			\lim\limits_{ k_1\rightarrow0^+}\xi( k_1)>0,\;\lim\limits_{ k_1\rightarrow (\rho_0^*/2)^-}\xi( k_1)=\infty.
		\end{equation}
		Moreover, for any fixed $ k_1\in(0,\rho_0^*/2)$, all the eigenvalues of the limiting $($$\epsilon\rightarrow0^+$$)$ eigenvalue problem \eqref{mz^*-1} have negative real parts for $ k_2\in(0,\xi( k_1))$, while \eqref{mz^*-1} has a positive eigenvalue for $ k_2\in(\xi( k_1), \infty)$.
		\item For any $ k_1\in(\rho_0^*/2,\infty)$, all the eigenvalues of the limiting $($$\epsilon\rightarrow0^+$$)$ eigenvalue problem \eqref{mz^*-1} have negative real parts regardless of $ k_2>0$.
	\end{enumerate}
\end{proposition}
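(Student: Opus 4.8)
The plan is to carry through the singular-limit-eigenvalue-problem (SLEP) analysis of \cite{NF,NM,TMN}: a number $\la^*=\la_R^*+i\la_I^*$ with $\la_R^*+2k_2>-\mu^*$ is an eigenvalue of the limiting problem \eqref{mz^*-1} precisely when $\det\mathcal N=0$, that is, when \eqref{slep-p} holds, so everything reduces to locating the solutions of \eqref{slep-p} in the closed right half-plane $\{\la_R^*\ge0\}$. I would first dispose of the genuinely complex branch. For $\la_I^*\ne0$ the common denominator $D:=(\rho_0^*-2k_1-\tau\la_R^*)^2+(\tau\la_I^*)^2$ is strictly positive, so the second equation of \eqref{slep-p} forces $\tau X=(\rho_0^*-2k_1-\tau\la_R^*)Y$; substituting this into the first equation and cancelling $D$ collapses the system to the single scalar identity
\begin{equation*}
	Y\big(\la_R^*,(\la_I^*)^2,k_2\big)=\tau .
\end{equation*}
By Proposition \ref{prop.AB}, $Y$ is strictly decreasing in each of its three arguments, hence $Y\le Y(0,0,0)=\tau^*$ on $\{\la_R^*\ge0,\ (\la_I^*)^2\ge0,\ k_2\ge0\}$; since $\tau>\tau^*$ the identity $Y=\tau$ is never satisfied there. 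Thus \eqref{mz^*-1} has no eigenvalue with $\la_I^*\ne0$ and $\la_R^*\ge0$, Hopf-type degeneracies are excluded, and the whole stability question is decided by the real eigenvalues.

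For the real branch $\la_I^*=0$ the second equation of \eqref{slep-p} is vacuous and the first reduces, on the generic locus $\rho_0^*-2k_1-\tau\la_R^*\ne0$, to $\Phi(\la_R^*;k_1,k_2)=0$ with
\begin{equation*}
	\Phi(\la_R^*;k_1,k_2):=X(\la_R^*,0,k_2)-\rho_0^*+2k_1+\tau\la_R^* .
\end{equation*}
Differentiating and using $\pa X/\pa\la_R^*\big|_{\la_I^*=0}=-Y(\la_R^*,0,k_2)$ from Proposition \ref{prop.AB} together with $Y(\la_R^*,0,k_2)\le Y(0,0,0)=\tau^*<\tau$, I get $\pa\Phi/\pa\la_R^*=\tau-Y(\la_R^*,0,k_2)>0$ on $[0,\infty)$, while $X(\la_R^*,0,k_2)\to0$ gives $\Phi\to+\infty$ as $\la_R^*\to+\infty$. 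Hence $\Phi(\cdot;k_1,k_2)$ is strictly increasing on $[0,\infty)$, so whether \eqref{mz^*-1} admits a nonnegative real eigenvalue is decided by the sign of $\Phi(0;k_1,k_2)=X(0,0,k_2)-(\rho_0^*-2k_1)$. I would also record, again from Proposition \ref{prop.AB}, that $k_2\mapsto X(0,0,k_2)$ is a continuous, strictly decreasing bijection of $(0,\infty)$ onto $(0,X(0,0,0))$ with $X(0,0,0)>\rho_0^*$. The degenerate point $\la_R^*=(\rho_0^*-2k_1)/\tau$ contributes no eigenvalue: there $M^*$ blows up, so \eqref{mz^*-1} forces $\langle z_R^*,\delta(x-x^*)\rangle=\langle z_I^*,\delta(x-x^*)\rangle=0$, whence $T_{\la_R^*}z^*=0$, and since $T_{\la_R^*}$ is invertible for $\la_R^*>0$ this gives $z^*=0$.

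Now the assertions follow. If $k_1\in(0,\rho_0^*/2)$, then $0<\rho_0^*-2k_1<\rho_0^*<X(0,0,0)$, so there is a unique $\xi(k_1)>0$ with $X(0,0,\xi(k_1))=\rho_0^*-2k_1$, i.e.\ $\Phi(0;k_1,\xi(k_1))=0$, which is exactly the condition for $\la^*=0$ to be an eigenvalue of \eqref{mz^*-1}; writing $\xi(k_1)$ as the composition of the continuous inverse of $X(0,0,\cdot)$ with the decreasing affine map $k_1\mapsto\rho_0^*-2k_1$ shows $\xi$ is continuous and strictly increasing, and the implicit function theorem applies since $\pa_{k_2}X(0,0,k_2)=-2Y(0,0,k_2)\ne0$. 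As $k_1\to0^+$, $\rho_0^*-2k_1\to\rho_0^*$, so $\xi(k_1)\to X(0,0,\cdot)^{-1}(\rho_0^*)>0$; as $k_1\to(\rho_0^*/2)^-$, $\rho_0^*-2k_1\to0^+$, which forces $\xi(k_1)\to+\infty$ because $X(0,0,k_2)\to0$ only as $k_2\to+\infty$. For fixed $k_1\in(0,\rho_0^*/2)$: if $k_2<\xi(k_1)$ then $\Phi(0;k_1,k_2)>0$, hence $\Phi>0$ on $[0,\infty)$ and, the complex branch being empty, every eigenvalue of \eqref{mz^*-1} has negative real part; if $k_2>\xi(k_1)$ then $\Phi(0;k_1,k_2)<0$ and strict monotonicity with $\Phi\to+\infty$ yields a unique root $\la_R^*>0$, a positive eigenvalue. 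Finally, if $k_1>\rho_0^*/2$ then $\Phi(0;k_1,k_2)=X(0,0,k_2)+(2k_1-\rho_0^*)>0$ for every $k_2>0$, so again $\Phi>0$ on $[0,\infty)$ and every eigenvalue has negative real part regardless of $k_2$. The main obstacle is the clean algebraic reduction of the complex SLEP to the single identity $Y=\tau$, together with the care needed around the degenerate denominators; once these are in place the argument is a one-dimensional monotonicity study, and the standing hypothesis $\tau>\tau^*=Y(0,0,0)$ is used exactly twice — once to empty the complex branch and once to make $\Phi$ strictly monotone.
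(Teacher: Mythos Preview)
Your proof is correct and follows the same SLEP strategy as the paper: reduce \eqref{mz^*-1} to $\det\mathcal N=0$, eliminate the complex branch via the identity $Y=\tau$ and the hypothesis $\tau>\tau^*=Y(0,0,0)$, and then analyse the real branch by monotonicity. Your packaging of the real case through the single function $\Phi(\la_R^*;k_1,k_2)=X(\la_R^*,0,k_2)-\rho_0^*+2k_1+\tau\la_R^*$ (using $\partial X/\partial\la_R^*=-Y$ and $Y<\tau$ to get strict monotonicity on $[0,\infty)$) is a bit more direct than the paper's route, which establishes the curve $\Gamma$ from $\mathcal H(k_1,k_2)=0$, then invokes the implicit function theorem on the complex SLEP function $\mathcal F^*$ near $\Gamma$ and argues by persistence of the real eigenvalue as $k_2$ moves away from $\xi(k_1)$; both arguments rest on the same inequality $\tau>Y(0,0,k_2)$, so the difference is presentational rather than substantive.
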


\begin{proof} To begin with, we note that when $\la_I^*\neq0$, \eqref{slep-p} can be simplified into
	\begin{equation}\label{p-im}
		\tau\la_R^*=\rho_0^*-2 k_1-X(\la_R^*,(\la_I^*)^2, k_2),\;\;Y(\la_R^*,(\la_I^*)^2, k_2)=\tau.
	\end{equation}
	
	While if $\la_I^*=0$, then from the first equation of \eqref{slep-p}, we have
	\begin{equation}\label{p-re}
		\tau\la_R^*=\rho_0^*-2 k_1-X(\la_R^*,0, k_2).
	\end{equation}
	
	We firstly claim that the limiting eigenvalue problem \eqref{mz^*-1} has no complex eigenvalues $\la_R^*\pm i\la_I^*$ satisfying $\la_R^*\ge0$ and $\la_I^*\neq0$. Suppose not. Then, \eqref{p-im} holds. In particular, $Y(\la_R^*,(\la_I^*)^2, k_2)=\tau$. It follows from Proposition \ref{prop.AB} and \eqref{AB} that
	\begin{equation}\label{nexim}
		\tau>\tau^*=\sum\limits_{n \ge 0} \dfrac{c_1^*c_2^*}{\gamma _n^2 }\psi_n ^2(x^*)=Y(0,0, 0)>Y(\la_R^*,(\la_I^*)^2, k_2),
	\end{equation}
	which contradicts with $Y(\la_R^*,(\la_I^*)^2, k_2)=\tau$. Thus, we complete the proof of the claim.
	
	1. Suppose that the limiting eigenvalue problem \eqref{mz^*-1} has zero as its eigenvalue, say $\la_R^*=\la_I^*=0$. By $\la_R^*=\la_I^*=0$, \eqref{p-re} is equivalent to
	\begin{equation}\label{HK11}
		\mathcal H( k_1, k_2):=\rho_0^*-2 k_1-X(0,0, k_2)=0.
	\end{equation}
	
	For any fixed $ k_1\in(0,\rho_0^*/2)$, in view of Proposition  \ref{prop.AB}, we have
	\begin{equation}\label{Fpl-r}
		\lim\limits_{ k_2\to 0^+}\mathcal H( k_1, k_2)=\rho_0^*-2 k_1-X(0,0, 0)<0,\;\;\lim\limits_{ k_2\to +\infty}\mathcal H( k_1, k_2)=\rho_0^*-2 k_1>0.
	\end{equation}	
	
	Moreover, by Proposition \ref{prop.AB}, we have $\frac{\pa \mathcal H}{\pa  k_2}( k_1, k_2)=-\frac{\pa X}{\pa k_2}(0,0, k_2)>0$.
	Then, by \eqref{Fpl-r}, for any fixed $k_1\in(0,\rho_0^*/2)$, there exists a unique $ k_2=\widehat  k_2$ such that $\mathcal H( k_1, \widehat k_2)=0$. Clearly, $\widehat  k_2$ depends on $ k_1$ continuously. Define $\xi( k_1):=\widehat  k_2$. Then, we have
	\begin{equation}\label{H=0}
		\mathcal H( k_1, \xi( k_1))=0.
	\end{equation}
	
	Differentiating \eqref{H=0} with respect to $ k_1$, we have $\frac{\partial\mathcal H}{\partial  k_1}+\frac{\partial\mathcal H}{\partial  k_2}\frac{d\xi( k_1)}{d  k_1}=0$. Since $\frac{\pa \mathcal H}{\pa  k_1}=-2<0$, we have
	\begin{equation}\label{gamK1}
		\frac{d\xi( k_1)}{d  k_1}=-2/\frac{\pa X}{\pa  k_2}(0,0, k_2)>0.
	\end{equation}
	
	Then, $\xi( k_1)$ is an increasing function of $ k_1$. So far, we have proved that if $\la_R^*=\la_I^*=0$, then $(k_1, k_2)\in\Gamma=\left\lbrace( k_1, k_2) \big|  k_2=\xi( k_1), 0< k_1<\rho_0^*/2 \right\rbrace$. On the other hand, it is clear that for any $( k_1, k_2)\in\Gamma$, \eqref{mz^*-1} has zero as its eigenvalue. Moreover, we claim that \eqref{fff1} holds.
	Suppose otherwise that the first term of \eqref{fff1} is false. Then, we must have $\xi( k_1)\rightarrow0^+$ as $ k_1\rightarrow0^+$ since $\xi( k_1)>0$. Then, as $ k_1\rightarrow0^+$, $0=\mathcal H( k_1,\xi( k_1))\rightarrow\mathcal H(0,0)=\rho_0^*-X(0,0,0)<0$, which is a contradiction. Suppose otherwise that the second term of \eqref{fff1} is false. Then, as $ k_1\rightarrow(\rho_0^*/2)^-$, $0=\mathcal H( k_1,\xi( k_1))\rightarrow\mathcal H(\rho_0^*/2,\xi(\rho_0^*/2))=-X(0,0,\xi(\rho_0^*/2))<0$ since $\xi(\rho_0^*/2)<+\infty$ and $X(0,0,\xi(\rho_0^*/2))>0$. Again, it reaches a contradiction. 
	
	Next, we shall show that the transversality condition holds. To that end, we need to resort to the SLEP system of \eqref{P-L-2b} in its complex form, instead of the real SLEP equations \eqref{slep-p}. In the following, we always assume that $k_1\in(0,\rho_0^*/2)$.
	
	Recall that
	\begin{equation*}
		\lim\limits_{\ep\to 0^+}\la=:\la^*=:\la_R^*+i\la_I^*, \;\lim\limits_{\ep\to 0^+}z=:z^*=:z_R^*+iz_I^*.
	\end{equation*}
	
	Define
	\begin{equation*}
		T_{\la^*}:=-d\frac{d^2}{dx^2}-\frac{f_u^*g_v^*-f_v^*g_u^*}{f_u^*}+2 k_2+\la^*.
	\end{equation*}
	
	Then, by Remark 2.3 in \cite{NM}, for any $\la^*$ satisfying $\la_R^*+2 k_2>-\mu^*$, $T_{\la^*}$ has a generalized inverse operator $(T_{\la^*})^{-1}$.
	
	Multiplying the second equation of \eqref{mz^*-1} by $i$ and adding the first equation of \eqref{mz^*-1}, we have
	\begin{equation*}
		T_{\la^*}z^*=\frac{\big\langle z^*,\delta(x - x^*) \big\rangle }{\rho_0^*-2 k_1-\tau\la^*}c_1^*c_2^*\delta(x - x^*),
	\end{equation*}
	or equivalently
	\begin{equation}\label{Tla*}
		z^*=\frac{\big\langle z^*,\delta(x - x^*) \big\rangle }{\rho_0^*-2 k_1-\tau\la^*}(T_{\la^*})^{-1}c_1^*c_2^*\delta(x - x^*).
	\end{equation}
	
	Then, taking the inner product with $\delta(x - x^*)$ on the two sides of \eqref{Tla*}, we have
	\begin{equation}\label{CSLEP}
		\bigg(1-\dfrac{\big\langle (T_{\la}^*)^{-1}c_1^*c_2^*\delta(x - x^*),\delta(x - x^*) \big\rangle}{\rho_0^*-2 k_1-\tau\la^*}\bigg)\big\langle z^*,\delta(x - x^*) \big\rangle=0,
	\end{equation}
	where the eigenfunction expansion shows that
	\begin{equation*}
		\left\langle (T_{\la}^*)^{-1}c_1^*c_2^*\delta(x - x^*),\delta(x - x^*) \right\rangle=\sum_{n=0}^{\infty}\frac{c_1^*c_2^*}{\gamma_n+2 k_2+\la^*}\psi_n^2(x^*).	
	\end{equation*}
	
	Then, \eqref{CSLEP} is equivalent to 
	\begin{equation}\label{CSLEP-2}
		\dfrac{\mathcal{F}^* \left(\la^*, k_1, k_2\right)}{\rho_0^*-2 k_1-\tau\la^*}\cdot\big\langle z^*,\delta(x - x^*) \big\rangle=0,
	\end{equation}	 
	where
	\begin{equation}\label{comFP-3}
		\mathcal{F}^* \left(\la^*, k_1, k_2\right):= \rho_0^*-2 k_1-\tau\la^*-\sum_{n=0}^{\infty}\frac{c_1^*c_2^*}{\gamma_n+2 k_2+\la^*}\psi_n^2(x^*).
	\end{equation}
	
	Thus, to expect nonzero solutions of \eqref{CSLEP-2}, we need $\mathcal{F}^* \left(\la^*, k_1, k_2\right)=0$, which is called the complex SLEP system of \eqref{P-L-2b} and is equivalent to
	\begin{equation}\label{RIF}
		\text{Re}\;\mathcal{F}^*:= \rho_0^*-2 k_1-\tau\la_R^*-X(\la_R^*,(\la_I^*)^2, k_2)=0,\;\;\text{Im}\;\mathcal{F}^* :=\la_I^*(Y(\la_R^*,(\la_I^*)^2,k_2)-\tau)=0,
	\end{equation}
	where $\text{Re}$ and $\text{Im}$ stand for the real part and imaginary part, respectively.
	
	It follows from \eqref{comFP-3} and \eqref{HK11} that 
	\begin{equation}\label{GlK12}
		\begin{split}
			\mathcal F^*(0,  k_1,\xi( k_1))&=\rho_0^{*}-2 k_1-X(0, 0,\xi( k_1))=\mathcal H( k_1,\xi( k_1))=0,\\
			\frac{\pa \mathcal{F}^*}{\pa \la^*}\left(0, k_1,\xi( k_1)\right)&=-\tau+\sum_{n=0}^{\infty}\frac{c_1^*c_2^*}{(\gamma_n+2 k_2)^2}\psi_n^2(x^*)=-\tau+Y(0,0, k_2)<0.
		\end{split}
	\end{equation}
	
	Thus, by the implicit function theorem, for any $( k_1, k_2)\in\mathcal O$, the sufficiently small neighborhood of $\Gamma$, there exists a unique $\la^*=\la^*( k_1, k_2)$ such that 
	\begin{equation*}
		\mathcal F^*(\la^*( k_1, k_2),  k_1, k_2)=0,\;\la^*(k_1,\xi(k_1))=0.
	\end{equation*}
	
	We claim that $\la^*=\la^*( k_1, k_2)$ is real. Suppose not. Then, $\la_I^*\neq0$. Thus, from the second equation of \eqref{RIF}, we have $Y(\la_R^*,(\la_I^*)^2,k_2)-\tau=0$. However, this is impossible since in the small neighborhood of $\Gamma$, we have $Y(\la_R^*,(\la_I^*)^2,k_2)-\tau<0$ in that $Y(0,0, k_2)-\tau<0$.
	
	Then, we have $\la^*=\la_R^*$ and $\la_I^*=0$. Therefore, by \eqref{comFP-3} and $\la^*=\la_R^*$, we have
	\begin{equation}\label{Flak12}
		\mathcal{F}^*(\la^*( k_1, k_2),k_1, k_2)=\rho_0^*-2 k_1-\tau\la^*( k_1, k_2)-X(\la^*( k_1, k_2),0, k_2)=0.
	\end{equation}
	
	For any fixed $k_1\in(0,\rho_0^*/2)$, differentiating \eqref{Flak12} and setting $k_2=\xi(k_1)$, one obtains
	\begin{equation}\label{la*K2}
		\dfrac{\pa \la^*}{\pa k_2} (k_1,\xi(k_1))=-\frac{\frac{\pa X}{\pa k_2}(0,0, k_2)}{\tau+\frac{\pa X}{\pa \la_R^*}(0,0, k_2)}=-\frac{\frac{\pa X}{\pa  k_2}(0,0, k_2)}{\tau-Y(0,0, k_2)}>0.
	\end{equation}
	
	Thus, for any fixed $k_1\in(0,\rho_0^*/2)$, whenever $ k_2$ is sufficiently close to $\xi( k_1)$, $\la^*=\la^*( k_1, k_2)$ exists and is real. Clearly, $\la^*( k_1, k_2)$ persists for any larger $ k_2$ (say, $ k_2>\xi( k_1)$) and is always positive, since \eqref{mz^*-1} has no complex eigenvalues $\la^*=\la_R^*\pm i\la_I^*$ satisfying $\la_R^*\ge0$ and $\la_I^*\neq0$. On the other hand, $\la^*( k_1, k_2)$ persists for any smaller $ k_2$ (say, $ k_2<\xi( k_1)$) and it will always have a negative real part. Suppose not. Then, there exists a critical value in $(0,\xi( k_1))$, denoted by $ \widetilde{k}_2^*$, of $ k_2$ such that at $ k_2=\widetilde{k}_2^*$, the real part of $\la^*( k_1, k_2)$ is zero, however, this is impossible since for one hand, \eqref{mz^*-1} has no complex eigenvalues $\la^*=\la_R^*\pm i\la_I^*$ satisfying $\la_R^*\ge0$ and $\la_I^*\neq0$, thus eigenvalue with zero real part can never be purely imaginary number; on the other hand, this eigenvalue with zero real part can never be zero since only when $( k_1, k_2)\in\Gamma$, can the eigenvalue be zero.
	
	Thus, we can conclude that for any fixed $k_1\in(0,\rho_0^*/2)$, all the eigenvalues of \eqref{mz^*-1} have negative real parts for $ k_2\in(0,\xi( k_1))$, while \eqref{mz^*-1} has a positive eigenvalue for $ k_2\in(\xi( k_1), \infty)$. This proves 1.
	
	2. For any fixed $ k_1\in(\rho_0^*/2,\infty)$, regardless of $ k_2>0$, we have $\mathcal{H}( k_1, k_2)<0$. Thus, the eigenvalue problem \eqref{mz^*-1} has no zero eigenvalues. Note that $\Gamma$ is the boundary of the stable region and unstable region. Since $\{( k_1, k_2):  k_1\in(\rho_0^*/2,\infty),  k_2\in(0, +\infty)\}$ is in the right side of $\Gamma$, and the stable region is in the left side of $\Gamma$, it follows that all the eigenvalues of \eqref{mz^*-1} have negative real parts. This thus proves 2.
\end{proof}
		
	Finally, we have the following results describing the stability of $(\widetilde{u}(x;\ep),\widetilde{v}(x;\ep),\widetilde{u}(x;\ep),\widetilde{v}(x;\ep))$ with respect to \eqref{P-1} for small $\ep>0$.
		
				\begin{figure}[!t]
			\centering
			\begin{tikzpicture}
				\fill [fill=green!20]  (0,4) --  (0,1) parabola (2.4,4)--cycle;
				\fill [fill=blue!20]  (0,0) --  (0,1) parabola (2.4,4)--(2.5,4)--(2.5,0)--cycle;
				\fill [fill=magenta!20]  (2.5,0) rectangle (7.5,4);
				\draw[->] (0,0) --(7.55,0) node[right] {$k_1$};
				\draw[->] (0,0) --(0,4.05) node[above] {$k_2$};
				\draw  (0,0) node[left] {$O$};
				\draw (2.5,4)--(2.5,0) node [below]{$\rho_0^*/2$};
				\draw (0,1) parabola (2.4,4);
				\draw [densely dashed] (1.2,1.75)--(1.2,0) node [below] {$k_1$};
				\draw [densely dashed] (1.2,1.75)--(0,1.75) node [left] {$k_2^\ep$};
				\draw node at (1.8,1.2){$\Gamma_2^\ep$};
				\draw node at (1.1,2.9){$\Gamma_1^\ep$};
				\draw node at (4.8,2.2){$\Gamma_3^\ep$};
			\end{tikzpicture}
			\caption{Stability diagram of $(\widetilde{u}(x;\ep),\widetilde{v}(x;\ep),\widetilde{u}(x;\ep),\widetilde{v}(x;\ep))$ with respect to \eqref{P-1} at any fixed $\ep\in(0,\ep_0)$, where $\Gamma_{1}^\ep=(0,\rho_0^*/2)\times(k_2^\ep,\infty)$, $\Gamma_{2}^\ep=(0,\rho_0^*/2)\times(0,k_2^\ep)$, and $\Gamma_{3}^\ep=(\rho_0^*/2,\infty)\times(0,\infty)$. In $\Gamma_1^\ep$, $(\widetilde{u}(x;\ep),\widetilde{v}(x;\ep),\widetilde{u}(x;\ep),\widetilde{v}(x;\ep))$ is Turing unstable with respect to \eqref{P-1} driven by suitable $k_1$ and $k_2$. In $\Gamma_2^\ep\cup\Gamma_3^\ep$, $(\widetilde{u}(x;\ep),\widetilde{v}(x;\ep),\widetilde{u}(x;\ep),\widetilde{v}(x;\ep))$ is stable with respect to \eqref{P-1}.}
			\label{Fig.gamep}
		\end{figure}
		
		\begin{theorem}\label{th.2-1}
			Suppose that $\tau\in(\tau^*,+\infty)$ holds. Let $\tau^*$ and $\rho_0^*$ be precisely defined in \eqref{nexim} and \eqref{App1}, respectively. Then, for any fixed $\ep\in(0,\ep_0)$, the following conclusions hold true:
			\begin{enumerate}
				\item If $k_1>\rho_0^*/2$, then the symmetric solution $(\widetilde{u}(x;\ep),\widetilde{v}(x;\ep),\widetilde{u}(x;\ep),\widetilde{v}(x;\ep))$ is stable with respect to system \eqref{P-1} for any $k_2>0$ $($see Fig. \ref{Fig.gamep}$)$.
				\item  If $k_1<\rho_0^*/2$, then there is a critical value $k_2^\ep>0$ such that the symmetric solution is stable with respect to system \eqref{P-1} for $k_2<k_2^\ep$, while unstable if $k_2>k_2^\ep$. 
				At $k_2=k_2^\ep$, the Turing-type steady state bifurcation occurs near the solution $(\widetilde{u}(x;\ep),\widetilde{v}(x;\ep),\widetilde{u}(x;\ep),\widetilde{v}(x;\ep))$  $($see Fig. \ref{Fig.gamep}$)$. In particular, $k_2^\ep$ is an increasing function of $k_1$, denoted by $k_2^\ep=\widehat \xi^\ep(k_1)$, satisfying $\lim\limits_{\ep\rightarrow0^+}\widehat \xi^\ep(k_1)=\xi( k_1)\in(0,\infty)$.
			\end{enumerate}		
		\end{theorem}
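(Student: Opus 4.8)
The plan is to study the spectrum of the linearization \eqref{LP-pl-1} of \eqref{P-1} at the symmetric solution. By the substitution \eqref{nform} this linearization splits into the symmetric eigenvalue problem \eqref{P-L-2a} and the antisymmetric one \eqref{P-L-2b}. Since \eqref{P-L-2a} is literally \eqref{2.12}, Theorem \ref{th.1-1} shows that all its eigenvalues have strictly negative real parts under the standing hypothesis on $\tau$, so the stability of $(\widetilde{u}(x;\ep),\widetilde{v}(x;\ep),\widetilde{u}(x;\ep),\widetilde{v}(x;\ep))$ with respect to \eqref{P-1} is decided entirely by the antisymmetric block \eqref{P-L-2b}. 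For the latter I would use the SLEP reduction already prepared in the paper: at $\ep=0$ the problem \eqref{P-L-2b} reduces to \eqref{mz^*-1}, whose nonzero solvability is governed by the complex SLEP $\mathcal F^*(\la^*,k_1,k_2)=0$ of \eqref{comFP-3}, and Proposition \ref{p*ei} gives the complete $\ep=0$ picture: for $k_1>\rho_0^*/2$ every eigenvalue of \eqref{mz^*-1} has negative real part for all $k_2>0$; for $k_1<\rho_0^*/2$ there is the monotone curve $k_2=\xi(k_1)$ across which a single real eigenvalue crosses $0$; and \eqref{mz^*-1} never has an eigenvalue with $\la_R^*\ge0$ and $\la_I^*\ne0$.

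The heart of the matter is to transfer this $\ep=0$ description to each fixed $\ep\in(0,\ep_0)$ (shrinking $\ep_0$ if necessary). I would introduce the $\ep$-level complex SLEP function $\mathcal F^\ep(\la,k_1,k_2)$ attached to \eqref{mz-1} (the analogue of \eqref{comFP-3} with $\rho_0^*$ replaced by $\rho(\ep)$, the $\gamma_n$ by the eigenvalues of the $\ep$-dependent operator, and $c_1^*c_2^*\psi_n^2(x^*)$ by the corresponding $\ep$-dependent coefficients built from $f_v^\ep\phi_n^\ep/\sqrt\ep$ and $g_u^\ep\phi_n^\ep/\sqrt\ep$), and use, following \cite{NF,NM,TMN}, that $\mathcal F^\ep\to\mathcal F^*$ together with its first derivatives uniformly on compact subsets of $\{\mathrm{Re}\,\la>-\mu^*\}$, thanks to the $H^{-1}$-convergences $f_v^\ep\phi_0^\ep/\sqrt\ep\to-c_1^*\delta(x-x^*)$ and $g_u^\ep\phi_0^\ep/\sqrt\ep\to c_2^*\delta(x-x^*)$ recorded before \eqref{c1c2}. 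From \eqref{GlK12} one has $\mathcal F^*(0,k_1,\xi(k_1))=0$ and $\partial_\la\mathcal F^*(0,k_1,\xi(k_1))=-\tau+Y(0,0,\xi(k_1))<0$, using $\tau>\tau^*=Y(0,0,0)\ge Y(0,0,k_2)$. The implicit function theorem then yields, for $\ep$ small, a unique real zero branch $\la=\Lambda^\ep(k_1,k_2)$ of $\mathcal F^\ep$ near $(0,k_1,\xi(k_1))$ and, for $k_1\in(0,\rho_0^*/2)$, a unique zero-eigenvalue curve $k_2=\widehat\xi^\ep(k_1)=:k_2^\ep$ with $\widehat\xi^\ep\to\xi$ as $\ep\to0^+$ (so $k_2^\ep>0$ for $\ep$ small); reality of $\Lambda^\ep$ near this curve follows exactly as in the proof of Proposition \ref{p*ei}, since the relevant $Y$-type quantities stay strictly below $\tau$. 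Differentiating $\mathcal F^\ep(0,k_1,\widehat\xi^\ep(k_1))=0$ in $k_1$ and using $\partial_{k_1}\mathcal F^\ep=-2$ and $\partial_{k_2}\mathcal F^\ep>0$ (the $\ep$-analogue of $\partial_{k_2}X<0$ in Proposition \ref{prop.AB}) gives $d\widehat\xi^\ep/dk_1>0$, and $\lim_{\ep\to0^+}\widehat\xi^\ep(k_1)=\xi(k_1)\in(0,\infty)$ follows from this convergence together with \eqref{fff1}.

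It remains to read off stability on each side and to identify the bifurcation. For part 1 ($k_1>\rho_0^*/2$): \eqref{mz^*-1} has no eigenvalue in the closed right half-plane for any $k_2>0$, and for small $\ep$ the function $\mathcal F^\ep(\cdot,k_1,k_2)$ has no zero with $\mathrm{Re}\,\la\ge0$ there — a purely imaginary crossing is excluded because no complex eigenvalue with $\la_R\ge0$ survives the limit, and a zero crossing is excluded because $\mathcal H(k_1,k_2)<0$ persists — so all eigenvalues of \eqref{P-L-2b} have negative real parts and, with the symmetric block, the symmetric solution is stable for every $k_2>0$. For part 2 ($k_1<\rho_0^*/2$): on $k_2<k_2^\ep$ the branch $\Lambda^\ep$ is negative and, by the same no-imaginary/no-zero-crossing argument, no other eigenvalue enters $\{\mathrm{Re}\,\la\ge0\}$, so the symmetric solution is stable; for $k_2>k_2^\ep$, $\Lambda^\ep(k_1,k_2)>0$ gives instability. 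At $k_2=k_2^\ep$, $\la=0$ is an eigenvalue of \eqref{P-L-2b}; its simplicity follows from that of the corresponding eigenvalue of the limiting operator (the factor $\langle z^*,\delta(x-x^*)\rangle$ in \eqref{CSLEP-2} spans a one-dimensional kernel) together with perturbation, while $\partial_{k_2}\Lambda^\ep(k_1,k_2^\ep)>0$ is the $\ep$-analogue of \eqref{la*K2}. Since the associated eigenfunction is antisymmetric in the reactor index, i.e. of the form $(\phi,\psi,-\phi,-\psi)$, the Crandall--Rabinowitz theorem yields a reactor-symmetry-breaking steady-state bifurcation — Turing-type, driven by the inter-reactor diffusion $k_2$ — at $k_2=k_2^\ep$, and the absence of a purely imaginary pair simultaneously rules out a Hopf bifurcation here.

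I expect the principal obstacle to lie in making the $\ep\to0$ transfer rigorous uniformly in the spectral parameter: one must control the entire infinite family of modes entering $\mathcal F^\ep$, not merely the principal one, in order to guarantee that \eqref{mz-1} creates no eigenvalue in the right half-plane away from the branch $\Lambda^\ep$, and to justify $\mathcal F^\ep\to\mathcal F^*$ and $\partial\mathcal F^\ep\to\partial\mathcal F^*$ in the weak topology in which the Dirac limits of $f_v^\ep\phi_0^\ep/\sqrt\ep$ and $g_u^\ep\phi_0^\ep/\sqrt\ep$ actually live. This is precisely the delicate singular-perturbation bookkeeping of \cite{NF,NM}, here made heavier by the two extra parameters $k_1,k_2$ entering through the block $M^\ep$ of \eqref{mz-1}; establishing simplicity of the zero eigenvalue at $k_2=k_2^\ep$ for each fixed small $\ep$, and not merely in the limit, is the other point requiring care.
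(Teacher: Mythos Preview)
Your proposal is correct and follows essentially the same route as the paper: both reduce to the antisymmetric block \eqref{P-L-2b}, pass to the complex SLEP function $\mathcal F^\ep$ (the paper's \eqref{FPep}), use its convergence to $\mathcal F^*$ together with Proposition~\ref{p*ei} and \eqref{GlK12}, and then apply the implicit function theorem twice to produce the real branch $\la(k_1,k_2,\ep)$ and the curve $\widehat\xi^\ep(k_1)$, with simplicity deferred to the arguments of \cite{NM,TMN}. The only cosmetic differences are that the paper obtains $d\widehat\xi^\ep/dk_1>0$ by continuity from \eqref{gamK1} rather than by a direct $\ep$-level differentiation, and it does not spell out the Crandall--Rabinowitz step you add at $k_2=k_2^\ep$.
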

		\begin{proof}
			We shall derive the complex characteristic equation of \eqref{mz-1} firstly.
			
			Solving the first equation of \eqref{P-L-2b} after omitting subscription \lq\lq a\rq\rq yields
			\begin{equation}\label{zhy}
				w=-(\mathcal{L}_1^{\ep}-\ep\tau\la)^{-1}(f_v^\ep z)=-\frac{\left\langle f_v^{\ep}z,\phi_0^{\ep}\right\rangle }{\mu_0^{\ep}-2\ep k_1-\ep\tau\la}\phi_0^{\ep}-(\mathcal{L}_1^{\ep}-\ep\tau\la)^{\dag}(f_v^{\ep}z).
			\end{equation}
			
			We define
			\begin{equation*}
				T_\la^{\ep}:=-d\frac{d^2}{dx^2}-g_v^{\ep}+2k_2+g_u^{\ep}(\mathcal{L}_1^{\ep}-\ep\tau\la)^{\dag}f_v^{\ep}+\la.
			\end{equation*}
			
			Substituting \eqref{zhy} into the second equation of \eqref{P-L-2b}, we have
			\begin{equation}\label{Tlaepz}
				T_\la^{\ep}z=-\frac{\left\langle f_v^{\ep}z,\phi_0^{\ep}\right\rangle }{\mu_0^{\ep}-2\ep k_1-\ep\tau\la}g_u^{\ep}\phi_0^{\ep}=-\frac{\left\langle z,\frac{f_v^{\ep}\phi_0^{\ep}}{\sqrt{\ep}}\right\rangle }{\rho(\ep)-2 k_1-\tau\la}\frac{g_u^{\ep}\phi_0^{\ep}}{\sqrt{\ep}}.
			\end{equation}
			
			By using the methods in Lemma 4.1 of \cite{NM}, one can check that, for any $\la$ satisfying $\text{Re}\;\la+2k_2>-\mu^*$, $T_\la^{\ep}$ has an inverse operator $(T_\la^{\ep})^{-1}$.
			
			Applying $(T_\la^{\ep})^{-1}$ into \eqref{Tlaepz} and taking the inner product with $\frac{f_v^{\ep}\phi_0^{\ep}}{\sqrt{\ep}}$ leads to
			\begin{equation}\label{lcj}
				\left( 1+\frac{\left\langle (T_\la^{\ep})^{-1}\frac{g_u^{\ep}\phi_0^{\ep}}{\sqrt{\ep}},\frac{f_v^{\ep}\phi_0^{\ep}}{\sqrt{\ep}}\right\rangle}{\rho(\ep)-2 k_1-\tau\la}\right) \left\langle z,\frac{f_v^{\ep}\phi_0^{\ep}}{\sqrt{\ep}} \right\rangle =:\dfrac{\mathcal{F}\left(\la,k_1,k_2,\ep\right)}{\rho(\ep)-2 k_1-\tau\la}\left\langle z,\frac{f_v^{\ep}\phi_0^{\ep}}{\sqrt{\ep}} \right\rangle=0.
			\end{equation}
			
			To expect the nonzero $\left\langle z,\frac{f_v^{\ep}\phi_0^{\ep}}{\sqrt{\ep}} \right\rangle$ and zero eigenvalue in \eqref{lcj}, we need to require $k_1\neq \rho_0^*/2$ (for $\ep\in(0,\ep_0)$) and
			\begin{equation}\label{FPep}
				\mathcal{F}\left(\la,k_1,k_2,\ep\right):= \rho(\ep)-2 k_1-\tau\la+\left\langle (T_\la^{\ep})^{-1}\frac{g_u^{\ep}\phi_0^{\ep}}{\sqrt{\ep}},\frac{f_v^{\ep}\phi_0^{\ep}}{\sqrt{\ep}} \right\rangle=0.
			\end{equation}

			As $\ep\to0^+$, we have
			\begin{equation}\label{FtoF*}
				\mathcal{F} \big(\la,k_1,k_2,\ep\big)\to \mathcal F^*(\la,k_1, k_2),
			\end{equation}
			where $\mathcal F^*(\la,k_1, k_2)$ is given by \eqref{comFP-3}.
			
			1. Suppose that $k_1>\rho_0^*/2$, then by Proposition \ref{p*ei}, all the eigenvalues of the limiting eigenvalue problem \eqref{mz^*-1} have negative real parts. Thus, for sufficiently small $\epsilon$, say for $0<\ep<\ep_1$, all the eigenvalues of the eigenvalue problem \eqref{P-L-2b} have negative real parts owing to the continuity of eigenvalues with respect to $\ep$. Setting $\widehat\ep_0=\min\left\lbrace \ep_0,\ep_1\right\rbrace $, and dropping the hat of $\widehat\ep_0$. This proves 1 for any fixed $\ep\in(0,\ep_0)$.
			
			2. Suppose that $2 k_1<\rho_0^*$. By \eqref{GlK12} and \eqref{FtoF*}, we have
			\begin{equation*}
				\mathcal{F}(0,k_1,\xi(k_1),0)= \mathcal F^*(0,k_1,\xi(k_1))=0,\;\;\frac{\pa \mathcal{F}}{\pa \la}(0,k_1,\xi(k_1),0)=\frac{\pa \mathcal{F}^*}{\pa \la}(0,k_1,\xi(k_1))<0.
			\end{equation*}
			
			Then, by the implicit function theorem, for sufficiently small $\epsilon$, say $\ep\in(0,\ep_2)$ for some $\epsilon_2>0$, there exists a $\la=\la\left(k_1,k_2,\ep\right)$, such that 
			\begin{equation}\label{lakke}
				\mathcal{F} \left(\la\big(k_1,k_2,\ep\big),k_1,k_2,\ep\right)=0,
			\end{equation}
			and that as $\ep\to0^+$, \begin{equation}\label{latola*}
				\la\big(k_1,k_2,\ep\big)\to\la^*( k_1, k_2).
			\end{equation}
			
			In particular,
			\begin{equation}\label{0=ImF}
				0=\text{Im}\;\mathcal{F} \left(\la\big(k_1,k_2,\ep\big),k_1,k_2,\ep\right)=-\tau\text{Im}\;\la\left(k_1,k_2,\ep\right)-\text{Im}\left\langle (T_\la^{\ep})^{-1}\frac{g_u^{\ep}\phi_0^{\ep}}{\sqrt{\ep}},\frac{f_v^{\ep}\phi_0^{\ep}}{\sqrt{\ep}} \right\rangle.
			\end{equation}
			
			We claim that $\la\left(k_1,k_2,\ep\right)$ is real. Suppose not. Then, $\text{Im}\;\la\left(k_1,k_2,\ep\right)\neq0$. Then, by \eqref{0=ImF}, we have
			\begin{equation}\label{0=-t-}
				0=-\tau-\frac{\text{Im}\left\langle (T_\la^{\ep})^{-1}g_u^{\ep}\phi_0^\ep/\sqrt\ep,f_v^{\ep}\phi_0^{\ep}/\sqrt{\ep} \right\rangle}{\text{Im}\;\la\left(k_1,k_2,\ep\right)}.
			\end{equation}
			
			Then, as $\ep\to0^+$, by \eqref{0=-t-}, \eqref{FtoF*} and \eqref{RIF}, we have $0=-\tau+Y(\la^*,0, k_2)$. However, since $\tau>\tau^*$, we have $-\tau+Y(\la^*,0, k_2)<0$, which is a contradiction.
			
			From \eqref{la*K2} and \eqref{latola*}, we can obtain
			\begin{equation}\label{palak2}
				\la(k_1,\xi(k_1),0)=0,\;\;\frac{\pa\la(k_1,\xi(k_1),0)}{\pa k_2}=\frac{\pa\la^*(k_1,\xi(k_1))}{\pa k_2}>0.
			\end{equation}
			
			Then, by the implicit function theorem, we can obtain the existence of $k_2=\widehat\xi\left(k_1,\ep\right)$ for small $\ep$, satisfying
			\begin{equation}\label{laex}
				\la\left( k_1,\widehat\xi\left( k_1,\ep\right),\ep\right)=0,\;\;\widehat\xi\left( k_1,\ep\right)\to\xi( k_1)\;\;(\text{as}\;\;\ep\to0^+).
			\end{equation}
			
			Thus, $\frac{\pa \widehat\xi( k_1,\ep)}{\pa  k_1}>0$ for $\ep\in(0,\ep_3)$ due to $\frac{\pa \xi( k_1)}{\pa  k_1}>0$ (given by \eqref{gamK1}).
			
			It follows from \eqref{palak2} and \eqref{laex} that the real eigenvalue $\la\left(k_1,k_2,\ep\right)$ changes from negative to positive as $k_2$ increases around $\widehat\xi\left( k_1,\ep\right)$. Define 
			\begin{equation*}
				\widehat\xi^\ep\left( k_1\right):=\widehat\xi\left( k_1,\ep\right).
			\end{equation*}
			
			Setting $\widehat\ep_0=\min\left\lbrace \ep_0,\ep_2,\ep_3\right\rbrace $, and dropping the hat of $\widehat\ep_0$. Then for any fixed $\ep\in(0,\ep_0)$, we can conclude 
			\begin{equation*}
				\frac{\pa \widehat\xi^\ep(k_1)}{\pa k_1}=\frac{\pa \widehat\xi\left( k_1,\ep\right)}{\pa k_1}>0.
			\end{equation*}
			
			Thus, $\widehat\xi^\ep(k_1)$ is increasing in $k_1$ for any fixed $\ep\in(0,\ep_0)$, such that the symmetric solution $(\widetilde{u}(x;\ep),\widetilde{v}(x;\ep),\widetilde{u}(x;\ep),\widetilde{v}(x;\ep))$ is stable with respect to system \eqref{P-1} when $k_2<k_2^\ep:=\widehat\xi^\ep(k_1)$, while unstable when $k_2>k_2^\ep$.
			
			Finally, by applying the similar argument in \cite{NM,TMN}, we can derive the simplicity of the critical real eigenvalue $\la\left(k_1,k_2,\ep\right)$, which ends up the proof of 2 for any fixed $\ep\in(0,\ep_0)$.
		\end{proof}

		\section{Dynamics of the coupled system \eqref{C-LE-D}: case of $0<\al<\infty$}
		In this section, we shall consider system \eqref{C-LE-D} in case of $0<\al<\infty$. For $i=1,2$, we define
		\begin{equation*}
			U_i(x,t)=\int_{-\infty}^{t} \al e^{-\al(t-s)}u_i(x,s)ds.
		\end{equation*}
		
		Then, system \eqref{C-LE-D} can be reduced to 
		\begin{equation}\label{C-LE-D-E}
			\begin{cases}
				\begin{split}
					\tau\dfrac{\partial u_i}{\partial t}&=\epsilon\dfrac{\partial^2 u_i}{\partial x^2}+\dfrac{1}{\sigma\epsilon}\bigg(a-u_i-\dfrac{4u_iv_i}{1+u_i^2}\bigg)+k_1(U_j-u_i),~&x\in(0,\ell),\;t>0,\\
					\dfrac{\partial v_i}{\partial t}&=d\dfrac{\partial^2 v_i}{\partial x^2}+u_i-\dfrac{u_iv_i}{1+u_i^2}+k_2(v_j-v_i),~&x\in(0,\ell),\;t>0,\\
					\frac{\pa U_j}{\pa t}&=\alpha u_j-\alpha U_j,~&x\in(0,\ell),\;t>0,\\
					\dfrac{\partial u_i}{\partial x}&=\dfrac{\partial v_i}{\partial x}=0,~&x=0,\ell,\;\;\;t>0,
				\end{split}
			\end{cases}
		\end{equation}
		where $i,j\in\{1,2\}$ and $i\neq j$. Clearly, the fact that $(\widetilde{u}(x;\epsilon),\widetilde{v}(x;\epsilon),\widetilde{u}(x;\epsilon),\widetilde{v}(x;\epsilon))$ is the steady state solution of system \eqref{C-LE-D} is equivalent to the fact that 
		\begin{equation*}
			(u_1,v_1,u_2,v_2,U_1,U_2)=(\widetilde{u}(x;\epsilon),\widetilde{v}(x;\epsilon),\widetilde{u}(x;\epsilon),\widetilde{v}(x;\epsilon), \widetilde{u}(x;\epsilon),\widetilde{u}(x;\epsilon))
		\end{equation*}
		is the steady state solution of system \eqref{C-LE-D-E}.
		
		The eigenvalue problem associated with the linearized system of \eqref{C-LE-D-E} around $(\widetilde{u},\widetilde{v},\widetilde{u},\widetilde{v}, \widetilde{u},\widetilde{u})$ is determined by
		\begin{equation}\label{D-L-1}
			\begin{cases}
				\begin{split}
					&\ep^2 w_1''(x)+f_u^{\ep} w_1+f_v^{\ep} z_1+\ep k_1 (W_2-w_1)=\ep\tau\la w_1,\\
					&d z_1''(x)+g_u^{\ep} w_1+g_v^{\ep} z_1+k_2(z_2-z_1)=\la z_1,\\
					&\ep^2 w_2''(x)+f_u^{\ep} w_2+f_v^{\ep} z_2+\ep k_1 (W_1-w_2)=\ep\tau\la w_2,\\
					&d z_2''(x)+g_u^{\ep} w_2+g_v^{\ep} z_2+k_2(z_1-z_2)=\la z_2,\\
					&\alpha w_1-\alpha W_1=\la W_1, \alpha w_2-\alpha W_2=\la W_2,
				\end{split}
			\end{cases}
		\end{equation}
		subject to homogeneous Neumann boundary conditions for $w_1$, $z_1$, $w_2$ and $z_2$.

		By the last two equations of \eqref{D-L-1}, for $i=1,2$, we have
		\begin{equation}\label{W w Z z}
			W_i=\theta_{\alpha}w_i:=\dfrac{\alpha}{\alpha+\la}w_i.
		\end{equation}
		
		Substituting \eqref{W w Z z} into \eqref{D-L-1}, we have
		\begin{equation}\label{D-L-2-a}
			\begin{cases}
				\begin{split}
					&\ep^2 w_1''(x)+f_u^{\ep} w_1+f_v^{\ep} z_1+\ep k_1 (\theta_{\alpha}w_2-w_1)=\ep\tau\la w_1,\\
					&d z_1''(x)+g_u^{\ep} w_1+g_v^{\ep} z_1+k_2(z_2-z_1)=\la z_1,\\
					&\ep^2 w_2''(x)+f_u^{\ep} w_2+f_v^{\ep} z_2+\ep k_1 (\theta_{\alpha}w_1-w_2)=\ep\tau\la w_2,\\
					&d z_2''(x)+g_u^{\ep} w_2+g_v^{\ep} z_2+k_2(z_1-z_2)=\la z_2,
				\end{split}
			\end{cases}
		\end{equation}
		subject to homogeneous Neumann boundary conditions.
		
		By virtue of transformation \eqref{nform}, one can reduce \eqref{D-L-2-a} to the following two eigenvalue problems
		\begin{equation}\label{EP-1}
			\begin{cases}
				\begin{split}
					&\ep^2 w_s''(x)+(f_u^{\ep}+\ep k_1(\theta_{\alpha}-1)) w_s+f_v^{\ep} z_s=\ep\tau\la w_s,~&x\in(0,\ell),\\
					&d z_s''(x)+g_u^{\ep} w_s+g_v^{\ep} z_s=\la z_s,~&x\in(0,\ell),\\
					&w_s'(0)=w_s'(\ell)=z_s'(0)=z_s'(\ell)=0,
				\end{split}
			\end{cases}
		\end{equation}
		and
		\begin{equation}\label{EP-2}
			\begin{cases}
				\begin{split}
					&\ep^2 w_a''(x)+(f_u^{\ep}-\ep k_1(\theta_{\alpha}+1)) w_a+f_v^{\ep} z_a=\ep\tau\la w_a,~&x\in(0,\ell),\\
					&d z_a''(x)+g_u^{\ep} w_a+(g_v^{\ep}-2k_2) z_a=\la z_a,~&x\in(0,\ell),\\
					&w_a'(0)=w_a'(\ell)=z_a'(0)=z_a'(\ell)=0.
				\end{split}
			\end{cases}
		\end{equation}
		
		Hereafter, the subscriptions of $w_a$, $z_s$, $w_a$ and $w_a$ are ignored if there is no confusion.
		
		\subsection{Analysis of the eigenvalue problem \eqref{EP-1}}\label{S3.2.1}
		
		Rewriting $\theta_\al$ as $\theta_\al=:\theta_{\alpha,R}+i\theta_{\alpha,I}$ and $\la=:\la_R+i\la_I$, we have
		\begin{equation*}
			\theta_{\alpha,R}:=\dfrac{\alpha(\alpha+\la_R)}{(\alpha+\la_R)^2+\la_I^2}, \;\;\theta_{\alpha,I}:=-\dfrac{\alpha \la_I}{(\alpha+\la_R)^2+\la_I^2}.
		\end{equation*}
		
		Then, combining with $w=:w_R+iw_I,\;z=:z_R+iz_I$, by \eqref{EP-1}, we have
		\begin{equation}\label{re-im EP-1}
			\begin{cases}
				\begin{split}
					&(\ep^2\frac{d^2}{dx^2}+f_u^{\ep}+\ep k_1(\theta_{\alpha,R}-1)) w_R-\ep k_1 \theta_{\alpha,I} w_I+f_v^{\ep} z_R=\ep\tau(\la_R w_R-\la_I w_I),\\
					&(\ep^2\frac{d^2}{dx^2}+f_u^{\ep}+\ep k_1(\theta_{\alpha,R}-1)) w_I+\ep k_1 \theta_{\alpha,I} w_R+f_v^{\ep} z_I=\ep\tau(\la_R w_I+\la_I w_R),\\
					&d\frac{d^2}{dx^2} z_R+g_u^{\ep} w_R+g_v^{\ep} z_R=\la_R z_R-\la_I z_I,\\
					&d\frac{d^2}{dx^2} z_I+g_u^{\ep} w_I+g_v^{\ep} z_I=\la_R z_I+\la_I z_R.
				\end{split}
			\end{cases}
		\end{equation}
		
		Define
		\begin{equation*}
			\mathcal{L}_{\alpha}^{\ep}:=\ep^2\dfrac{d^2}{dx^2}+f_u^{\ep}+\ep k_1(\theta_{\alpha,R}-1)-\ep\tau\la_R.
		\end{equation*}
		
		Clearly, $\mathcal{L}_{\alpha}^{\ep}$ has complete and orthonormal pairs of eigenvalues and eigenfunctions
		\begin{equation*}
			\left\lbrace (\mu_n^{\ep}+\ep k_1(\theta_{\alpha,R}-1)-\ep\tau\la_R,\phi_n^{\ep})\right\rbrace _{n=0}^{\infty}
		\end{equation*}
		with respect to homogeneous Neumann boundary condition in $L^2(0,\ell)$ sense.
		
		Solving the first two equations of \eqref{re-im EP-1} with respect to $w_R$ and $w_I$, we have
		\begin{equation*}
			\begin{split}
				w_R&=\left[ I+(\ep\tau\la_I-\ep k_1\theta_{\alpha,I})^2(\mathcal{L}_{\alpha}^{\ep})^{-2}\right] ^{-1}\left[(\ep\tau\la_I-\ep k_1\theta_{\alpha,I})(\mathcal{L}_{\alpha}^{\ep})^{-2}(f_v^{\ep}z_I)-(\mathcal{L}_{\alpha}^{\ep})^{-1}(f_v^{\ep}z_R)\right],\\
				w_I&=-\left[ I+(\ep\tau\la_I-\ep k_1\theta_{\alpha,I})^2(\mathcal{L}_{\alpha}^{\ep})^{-2}\right] ^{-1}\left[(\mathcal{L}_{\alpha}^{\ep})^{-1}(f_v^{\ep}z_I)+(\ep\tau\la_I-\ep k_1\theta_{\alpha,I})(\mathcal{L}_{\alpha}^{\ep})^{-2}(f_v^{\ep}z_R)\right],
			\end{split}
		\end{equation*}
		where the invertibility of $\mathcal{L}_{\alpha}^{\ep}$ for small $\ep>0$ and $\text{Re}\la>-\mu^*$ can be shown similarly by Lemma 2.1 of \cite{NF} and Remark 2.2 of \cite{NM}.
		
		By using the eigenfunction expansions, we have
		\begin{equation}\label{WRIex-2}
			w_R=\sum\limits_{n \ge 0}w_R^n\phi_n^{\ep}=w_R^0\phi_0^{\ep}+w_R^\dag,\;\;
			w_I=\sum\limits_{n \ge 0}w_I^n\phi_n^{\ep}=w_I^0\phi_0^{\ep}+w_I^\dag,
		\end{equation}
		where $w_R^\dag=\sum\limits_{n \ge 1}w_R^n\phi_n^{\ep}$, $w_I^\dag=\sum\limits_{n \ge 1}w_I^n\phi_n^{\ep}$, and for $n\ge0$, it holds true that
		\begin{equation*}
			\begin{split}
				w_R^n&=\dfrac{(-\mu_n^{\ep}+\ep k_1(1-\theta_{\alpha,R})+\ep\tau\la_R)\left\langle {  f_v^{\ep} {z_R},\phi _n^\varepsilon } \right\rangle+(\ep\tau\la_I-\ep k_1\theta_{\alpha,I})\left\langle {  f_v^{\ep} {z_I},\phi _n^\varepsilon } \right\rangle }{(\mu_n^{\ep}+\ep k_1(\theta_{\alpha,R}-1)-\ep\tau\la_R)^2+(\ep\tau\la_I-\ep k_1\theta_{\alpha,I})^2},\\
				w_I^n&=\dfrac{(-\mu_n^{\ep}+\ep k_1(1-\theta_{\alpha,R})+\ep\tau\la_R)\left\langle {  f_v^{\ep} {z_I},\phi _n^\varepsilon } \right\rangle-(\ep\tau\la_I-\ep k_1\theta_{\alpha,I})\left\langle {  f_v^{\ep} {z_R},\phi _n^\varepsilon } \right\rangle }{(\mu_n^{\ep}+\ep k_1(\theta_{\alpha,R}-1)-\ep\tau\la_R)^2+(\ep\tau\la_I-\ep k_1\theta_{\alpha,I})^2}.
			\end{split}	
		\end{equation*}
		
		Similar to \eqref{WRIex-1}, we have
		\begin{equation*}
			\begin{split}
				\begin{split}
					w_R^{\dag}&=\left[ I+(\ep\tau\la_I-\ep k_1\theta_{\alpha,I})^2(\mathcal{L}_{\alpha}^{\ep})^{-2}\right] ^{\dag}\left[(\ep\tau\la_I-\ep k_1\theta_{\alpha,I})(\mathcal{L}_{\alpha}^{\ep})^{2\dag}(f_v^{\ep}z_I)-(\mathcal{L}_{\alpha}^{\ep})^{\dag}(f_v^{\ep}z_R)\right],\\
					w_I^{\dag}&=-\left[ I+(\ep\tau\la_I-\ep k_1\theta_{\alpha,I})^2(\mathcal{L}_{\alpha}^{\ep})^{-2}\right] ^{\dag}\left[(\mathcal{L}_{\alpha}^{\ep})^{\dag}(f_v^{\ep}z_I)+(\ep\tau\la_I-\ep k_1\theta_{\alpha,I})(\mathcal{L}_{\alpha}^{\ep})^{2\dag}(f_v^{\ep}z_R)\right],
				\end{split}
			\end{split}
		\end{equation*}
		where 
		\begin{equation*}
			(\mathcal{L}_{\alpha}^{\ep})^{\dag}=\sum\limits_{n \ge 1}\dfrac{\left\langle \cdot,\phi_n^{\ep}\right\rangle }{\mu_n^{\ep}-\ep\tau\la_R+\ep k_1(\theta_{\alpha,R}-1)}\phi_n^{\ep},\;\;(\mathcal{L}_{\alpha}^{\ep})^{2\dag}=\sum\limits_{n \ge 1}\dfrac{\left\langle \cdot,\phi_n^{\ep}\right\rangle }{(\mu_n^{\ep}-\ep\tau\la_R+\ep k_1(\theta_{\alpha,R}-1)) ^2}\phi_n^{\ep},
		\end{equation*}
		and
		\begin{equation*}
			\left[ I+(\ep\tau\la_I-\ep k_1\theta_{\alpha,I})^2(\mathcal{L}_{\alpha}^{\ep})^{-2}\right] ^{\dag}=\sum\limits_{n \ge 1}\frac{\left(\mu_n^{\ep}-\ep\tau\la_R+\ep k_1(\theta_{\alpha,R}-1)\right)^2 \left\langle \cdot,\phi_n^{\ep}\right\rangle }{\left(\mu_n^{\ep}-\ep\tau\la_R+\ep k_1(\theta_{\alpha,R}-1)\right)^2+\left(\ep\tau\la_I-\ep k_1\theta_{\alpha,I}\right)^2}\phi_n^{\ep}.
		\end{equation*}

		Substituting \eqref{WRIex-2} into the last two equations of \eqref{re-im EP-1} yields
		\begin{equation}\label{mz-2}
			\begin{pmatrix}
				\widehat{T}_{\la_R}^\varepsilon &- \lambda _I + \widehat{S}^{\ep}\\
				\lambda _I - \widehat{S}^{\ep}&\widehat{T}_{\la_R}^\varepsilon 
			\end{pmatrix}
			\begin{pmatrix}
				z_R\\
				z_I
			\end{pmatrix}
			= 
			-\frac{g_u^{\ep}\phi _0^\ep }{\sqrt \ep  }\widehat{M}^\varepsilon 
			\begin{pmatrix}
				\left\langle z_R,   \frac{f_v^{\ep}\phi _0^\ep}{\sqrt \ep} \right\rangle \\
				\left\langle z_I,   \frac{f_v^{\ep}\phi _0^\ep}{\sqrt \ep} \right\rangle
			\end{pmatrix},
		\end{equation}
		where
		\begin{equation*}
			\begin{cases}
				\begin{aligned}
					\widehat{T}_{\la_R}^{\ep}:=&-d\frac{d^2}{dx^2}-g_v^{\ep}+g_u^{\ep}\big\lbrace I+(\ep\tau\la_I-\ep k_1\theta_{\alpha,I})^2(\mathcal{L}_{\alpha}^{\ep})^{-2}\big\rbrace ^{\dag}(\mathcal{L}_{\alpha}^{\ep})^{\dag}f_v^{\ep} +\la_R,\\
					\widehat{S}^{\ep}:=&-(\ep\tau\la_I-\ep k_1\theta_{\alpha,I})g_u^{\ep}\big\lbrace I+(\ep\tau\la_I-\ep k_1\theta_{\alpha,I})^2(\mathcal{L}_{\alpha}^{\ep})^{-2}\big\rbrace ^{\dag}(\mathcal{L}_{\alpha}^{\ep})^{2\dag}f_v^{\ep},\\
					\widehat{M}^\ep:=&\begin{pmatrix}
						\frac{\rho(\ep)- k_1(1-\theta_{\alpha,R})-\tau\la_R}{\left(\rho(\ep)- k_1(1-\theta_{\alpha,R})-\tau\la_R\right)^2+\left(\tau\lambda_I- k_1\theta_{\alpha,I}\right)^2}
						&\frac{ k_1\theta_{\alpha,I}-\tau\lambda_I}{\left(\rho(\ep)-k_1(1-\theta_{\alpha,R})-\tau\la_R\right)^2+\left(\tau\lambda_I- k_1\theta_{\alpha,I}\right)^2}\\
						\frac{\tau\lambda_I- k_1\theta_{\alpha,I}}{\left(\rho(\ep)- k_1(1-\theta_{\alpha,R})-\tau\la_R\right)^2+\left(\tau\lambda_I- k_1\theta_{\alpha,I}\right)^2}
						&\frac{\rho(\ep)- k_1(1-\theta_{\alpha,R})-\tau\la_R}{\left(\rho(\ep)- k_1(1-\theta_{\alpha,R})-\tau\la_R\right)^2+\left(\tau\lambda_I- k_1\theta_{\alpha,I}\right)^2}
					\end{pmatrix}.
				\end{aligned}
			\end{cases}			
		\end{equation*}
		
		Analogous to \eqref{mz^*-1}, we can obtain the following singular limit system of \eqref{mz-2} 
		\begin{equation}\label{mz^*-2}
			\begin{pmatrix}
				\widehat{T}_{\la_R^*} & -\lambda _I^*\\
				\lambda _I^* &\widehat{T}_{\la_R^*}
			\end{pmatrix}
			\begin{pmatrix}
				z_R^*\\
				z_I^*
			\end{pmatrix}
			= c_1^*c_2^*\delta(x - x^*)\widehat{M}^*
			\begin{pmatrix}
				\left\langle z_R^*, \delta(x - x^*) \right\rangle \\
				\left\langle z_I^*, \delta(x - x^*) \right\rangle
			\end{pmatrix},\;\;\text{in $H^{-1}$ sense},
		\end{equation}
		where 
		\begin{equation*}
			\begin{split}
				\widehat{T}_{\la_R^*}:=&-d\frac{d^2}{dx^2}-\frac{f_u^*g_v^*-f_v^*g_u^*}{f_u^*}+\la_R^*,\\
				\widehat{M}^*:=&\begin{pmatrix}
					\frac{\rho_0^*-k_1(1-\theta_{\alpha,R}^*)-\tau\la_R^*}{\left(\rho_0^*-k_1(1-\theta_{\alpha,R}^*)-\tau\la_R^*\right)^2+\left(\tau\lambda_I^*-k_1\theta_{\alpha,I}^*\right)^2}
					&\frac{k_1\theta_{\alpha,I}^*-\tau\lambda_I^*}{\left(\rho_0^*-k_1(1-\theta_{\alpha,R}^*)-\tau\la_R^*\right)^2+\left(\tau\la_I^*-k_1\theta_{\alpha,I}^*\right)^2}\\
					\frac{\tau\lambda_I^*-k_1\theta_{\alpha,I}^*}{\left(\rho_0^*-k_1(1-\theta_{\alpha,R}^*)-\tau\la_R^*\right)^2+\left(\tau\la_I^*-k_1\theta_{\alpha,I}^*\right)^2}
					&\frac{\rho_0^*-k_1(1-\theta_{\alpha,R}^*)-\tau\la_R^*}{\left(\rho_0^*-k_1(1-\theta_{\alpha,R}^*)-\tau\la_R^*\right)^2+\left(\tau\la_I^*-k_1\theta_{\alpha,I}^*\right)^2}
				\end{pmatrix},
			\end{split}
		\end{equation*}
		in which $\rho_0^*$ is defined by \eqref{App1} in Appendix, and
		\begin{equation*}
			\theta_{\alpha,R}^*:=\frac{\alpha(\alpha+\la_R^*)}{(\alpha+\la_R^*)^2+(\la_I^*)^2}, \;\;\theta_{\alpha,I}^*:=-\frac{\alpha \la_I^*}{(\alpha+\la_R^*)^2+(\la_I^*)^2}.
		\end{equation*}
		
		By Lemma 2.4 of \cite{NM}, the generalized inverse operator $(\widehat{T}_{\la_R^*})^{-1}$ exists for $\la_R^*>-\mu^*$, where $\mu^*$ is given by \eqref{mu^*}.
		Similar to the derivation of \eqref{mz^*-1-b} in \S 2, we have
		\begin{equation}\label{mz^*-2-b}
			\begin{pmatrix}
				z_R^*\\
				z_I^*
			\end{pmatrix}
			= c_1^*c_2^*
			\widehat{G}
			\begin{pmatrix}
				I& \lambda _I^*(\widehat{T}_{\la_R^*})^{-1}\\
				-\lambda _I^*(\widehat{T}_{\la_R^*})^{-1}&I
			\end{pmatrix}
			(\widehat{T}_{\la_R^*})^{-1}\delta(x - x^*)
			\widehat{M}^* 
			\begin{pmatrix}
				\left\langle {{z_R^*}, \delta(x - x^*)} \right\rangle \\
				\left\langle {{z_I^*}, \delta(x - x^*)} \right\rangle
			\end{pmatrix},
		\end{equation}
		where $\widehat{G}:=\{I+(\lambda _I^*)^2(\widehat{T}_{\la_R^*})^{-2}\}^{-1}$. 
		
		Taking the inner product with $\delta(x - x^*)$ in \eqref{mz^*-2-b}, we have
		\begin{equation*}
			\mathcal N_\al
			\begin{pmatrix}
				\left\langle z_R^*, \delta(x - x^*) \right\rangle \\
				\left\langle z_I^*, \delta(x - x^*) \right\rangle
			\end{pmatrix}
			=0,
		\end{equation*}
		where
		\begin{equation}\label{mz^*-2-c}
			\mathcal N_\al:=\left\langle c_1^*c_2^*
			\widehat{G}
			\begin{pmatrix}
				I& \lambda _I^*(\widehat{T}_{\la_R^*})^{-1}\\
				-\lambda _I^*(\widehat{T}_{\la_R^*})^{-1}&I
			\end{pmatrix}
			(\widehat{T}_{\la_R^*})^{-1}\delta(x - x^*)
			\widehat{M}^*,\delta(x - x^*) \right\rangle  - I.
		\end{equation}
		
		Similarly, we can check that $\mathcal N_\al$ is equivalent to
		\begin{equation*}
			\begin{pmatrix}
				\frac{\left(\rho_0^*-k_1(1-\theta_{\alpha,R}^*)-\tau\la_R^*\right)\widehat{X}+\la_I^*\left(\tau\lambda_I^*-k_1\theta_{\alpha,I}^*\right)\widehat{Y}}{\left(\rho_0^*-k_1(1-\theta_{\alpha,R}^*)-\tau\la_R^*\right)^2+\left(\tau\lambda_I^*-k_1\theta_{\alpha,I}^*\right)^2}-1  & -\frac{(\tau\lambda_I^*-k_1\theta_{\alpha,I}^*)\widehat{X}-\la_I^*(\rho_0^*-k_1(1-\theta_{\alpha,R}^*)-\tau\la_R^*)\widehat{Y}}{\left(\rho_0^*-k_1(1-\theta_{\alpha,R}^*)-\tau\la_R^*\right)^2+\left(\tau\lambda_I^*-k_1\theta_{\alpha,I}^*\right)^2}\\
				\frac{(\tau\lambda_I^*-k_1\theta_{\alpha,I}^*)\widehat{X}-\la_I^*(\rho_0^*-k_1(1-\theta_{\alpha,R}^*)-\tau\la_R^*)\widehat{Y}}{\left(\rho_0^*-k_1(1-\theta_{\alpha,R}^*)-\tau\la_R^*\right)^2+\left(\tau\lambda_I^*-k_1\theta_{\alpha,I}^*\right)^2} &  \frac{\left(\rho_0^*-k_1(1-\theta_{\alpha,R}^*)-\tau\la_R^*\right)\widehat{X}+\la_I^*\left(\tau\lambda_I^*-k_1\theta_{\alpha,I}^*\right)\widehat{Y}}{\left(\rho_0^*-k_1(1-\theta_{\alpha,R}^*)-\tau\la_R^*\right)^2+\left(\tau\lambda_I^*-k_1\theta_{\alpha,I}^*\right)^2}-1
			\end{pmatrix},
		\end{equation*}
		where $\widehat{X}=\widehat{X}(\la_R^*,(\la_I^*)^2)$ and $\widehat{Y}=\widehat{Y}(\la_R^*,(\la_I^*)^2)$ are respectively given by
		\begin{equation}\label{AB1}
			\begin{split}
				\widehat{X}&:=\left\langle {c_1^*c_2^*\widehat{G}{(\widehat{T}_{\la_R^*})^{-1}}\delta(x - x^*),{\delta(x - x^*)}} \right\rangle=\sum\limits_{n \ge 0} \dfrac{(\gamma _n + \lambda _R^*)c_1^*c_2^*  }{(\gamma _n + \lambda _R^*)^2 + (\lambda _I^*)^2}\psi_n ^2(x^*),\\
				\widehat{Y}&:=\left\langle {c_1^*c_2^*\widehat{G}{(\widehat{T}_{\la_R^*})^{-2}}\delta(x - x^*),{\delta(x - x^*)}} \right\rangle=\sum\limits_{n \ge 0} \dfrac{c_1^*c_2^*  }{(\gamma _n + \lambda _R^*)^2 + (\lambda _I^*)^2}\psi_n ^2(x^*).
			\end{split}    
		\end{equation}
		
		To expect the existence of nonzero solutions of \eqref{mz^*-2-c}, one requires $\det (\mathcal N_\al)=0$, or equivalently
		\begin{equation}\label{SLEP-1}\tag{SLEP-1}
			\begin{cases}
				\begin{split}
					&\frac{\left(\rho_0^*-k_1(1-\theta_{\alpha,R}^*)-\tau\la_R^*\right)\widehat{X}+\la_I^*\left(\tau\lambda_I^*-k_1\theta_{\alpha,I}^*\right)\widehat{Y}}{\left(\rho_0^*-k_1(1-\theta_{\alpha,R}^*)-\tau\la_R^*\right)^2+\left(\tau\lambda_I^*-k_1\theta_{\alpha,I}^*\right)^2}=1,\\
					&\frac{(\tau\lambda_I^*-k_1\theta_{\alpha,I}^*)\widehat{X}-\la_I^*(\rho_0^*-k_1(1-\theta_{\alpha,R}^*)-\tau\la_R^*)\widehat{Y}}{\left(\rho_0^*-k_1(1-\theta_{\alpha,R}^*)-\tau\la_R^*\right)^2+\left(\tau\lambda_I^*-k_1\theta_{\alpha,I}^*\right)^2}= 0,			
				\end{split}
			\end{cases}
		\end{equation}
		which is called the real singular limit eigenvalue problem (SLEP) of \eqref{EP-1}. 
		
		Similar to Proposition \ref{prop.AB}, we have
		
		\begin{proposition}\label{propAB1}
			Let $\widehat{X}(\la_R^*,(\la_I^*)^2)$ and $\widehat{Y}(\la_R^*,(\la_I^*)^2)$ be defined in \eqref{AB1}. Then it holds true that 
			\begin{equation*}
				\begin{split}
					& \widehat{X}(0,0)=\sum\limits_{n \ge 0} \dfrac{c_1^*c_2^*}{\gamma _n }\psi_n ^2(x^*)>\rho_0^*,\;\;\widehat{Y}(0,0)=\sum\limits_{n \ge 0} \dfrac{c_1^*c_2^*}{\gamma _n^2 }\psi_n ^2(x^*)>0,\;\;\ds\frac{\pa \widehat{Y}}{\pa \la_R^*}<0,\\
					&\ds\frac{\pa \widehat{X}}{\pa (\la_I^*)^2}<0,\;\ds\frac{\pa \widehat{Y}}{\pa (\la_I^*)^2}<0,\;\lim\limits_{(\la_I^*)^2\to\infty}\widehat{X}(\la_R^*,(\la_I^*)^2)=\lim\limits_{(\la_I^*)^2\to\infty}\widehat Y(\la_R^*,(\la_I^*)^2)=0.\\
				\end{split}
			\end{equation*}
		\end{proposition}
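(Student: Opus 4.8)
The plan is to mirror, almost verbatim, the proof of Proposition \ref{prop.AB}, working directly with the eigenfunction expansions recorded in \eqref{AB1}; the only structural simplification is that $\widehat X$ and $\widehat Y$ now depend on $(\la_R^*,(\la_I^*)^2)$ but carry no $k_2$-dependence, so the terms $\partial_{k_2}X$, $\partial_{k_2}Y$ of Proposition \ref{prop.AB} simply drop out, while the $\la_R^*$- and $(\la_I^*)^2$-derivatives are computed exactly as there. Throughout I work in the admissible range $\la_R^*>-\mu^*$ in which $(\widehat T_{\la_R^*})^{-1}$ exists (by Lemma 2.4 of \cite{NM}), and I write $s:=(\la_I^*)^2$ for brevity.

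First I would set down the structural facts that legitimise the term-by-term manipulations. By \eqref{c1c2} we have $c_1^*,c_2^*>0$, and $\psi_n^2(x^*)\ge0$ with not all $\psi_n(x^*)$ vanishing (since $\delta(x-x^*)=\sum_{n\ge0}\psi_n(x^*)\psi_n\neq0$ in $H^{-1}$). The potential of the operator $-d\tfrac{d^2}{dx^2}-(f_u^*g_v^*-f_v^*g_u^*)/f_u^*$ equals $(f_u^*g_v^*-f_v^*g_u^*)/(-f_u^*)$, which by the choice \eqref{mu^*} exceeds $\mu^*$ pointwise; hence, by the variational characterisation of Neumann eigenvalues, $\gamma_n\ge\gamma_0>\mu^*$ for every $n\ge0$, so that $\gamma_n+\la_R^*>0$ on the whole range $\la_R^*>-\mu^*$. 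Finally, the standard one-dimensional Sturm--Liouville asymptotics give $\gamma_n=O(n^2)$ and $\sup_{x}|\psi_n(x)|=O(1)$, whence $\sum_{n\ge0}c_1^*c_2^*\psi_n^2(x^*)/\gamma_n^2<\infty$; this series dominates all the series in \eqref{AB1} and their formal termwise derivatives, so the latter converge absolutely and locally uniformly, justifying differentiation under the sum.

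With this in place the individual assertions are immediate. Putting $\la_R^*=\la_I^*=0$ in \eqref{AB1} gives $\widehat X(0,0)=\sum_{n\ge0}c_1^*c_2^*\psi_n^2(x^*)/\gamma_n$, which is \emph{literally} the quantity $X(0,0,0)$ of Proposition \ref{prop.AB}, so $\widehat X(0,0)>\rho_0^*$ is inherited from there, and $\widehat Y(0,0)=\sum_{n\ge0}c_1^*c_2^*\psi_n^2(x^*)/\gamma_n^2>0$ since every summand is nonnegative and not all vanish. Termwise differentiation yields
\begin{equation*}
\begin{aligned}
\frac{\partial\widehat Y}{\partial\la_R^*}&=-2\sum_{n\ge0}\frac{(\gamma_n+\la_R^*)\,c_1^*c_2^*\psi_n^2(x^*)}{\big[(\gamma_n+\la_R^*)^2+s\big]^2},\qquad
\frac{\partial\widehat X}{\partial s}=-\sum_{n\ge0}\frac{(\gamma_n+\la_R^*)\,c_1^*c_2^*\psi_n^2(x^*)}{\big[(\gamma_n+\la_R^*)^2+s\big]^2},\\
\frac{\partial\widehat Y}{\partial s}&=-\sum_{n\ge0}\frac{c_1^*c_2^*\psi_n^2(x^*)}{\big[(\gamma_n+\la_R^*)^2+s\big]^2},
\end{aligned}
\end{equation*}
and each right-hand side is \emph{strictly} negative because $\gamma_n+\la_R^*>0$, $c_1^*c_2^*\psi_n^2(x^*)\ge0$, and the sums are not identically zero (note that $\partial\widehat X/\partial\la_R^*$, whose $n$-th summand has the sign of $s-(\gamma_n+\la_R^*)^2$, is \emph{not} sign-definite, which is precisely why it is absent from the statement). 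For the limits as $s\to\infty$, each summand of $\widehat X$ and of $\widehat Y$ tends to $0$ and is bounded, in absolute value, by the corresponding summand of the $s$-independent convergent series $\sum_n c_1^*c_2^*\psi_n^2(x^*)/(\gamma_n+\la_R^*)$, respectively $\sum_n c_1^*c_2^*\psi_n^2(x^*)/(\gamma_n+\la_R^*)^2$; dominated convergence then gives $\widehat X(\la_R^*,s)\to0$ and $\widehat Y(\la_R^*,s)\to0$.

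The only step that is not pure bookkeeping with positive convergent series is the strict inequality $\widehat X(0,0)>\rho_0^*$: it cannot be read off the series term by term and instead relies on the definition of $\rho_0^*$ via \eqref{App1} together with the SLEP analysis of \cite{NM}. But since $\widehat X(0,0)$ coincides identically with $X(0,0,0)$, this is exactly the inequality already established in Proposition \ref{prop.AB}, so there is nothing new to prove; the remainder of the proof is routine.
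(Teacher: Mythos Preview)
Your proposal is correct and takes essentially the same approach as the paper: the paper simply writes ``Similar to Proposition \ref{prop.AB}'' before stating the result, and Proposition \ref{prop.AB} itself is proved by citing \cite{NM} for $X(0,0,0)>\rho_0^*$ and declaring that ``the rest of the results can be proved by using the eigenfunction expansions.'' Your identification $\widehat X(0,0)=X(0,0,0)$ is the right observation, and the termwise computations you spell out are exactly the details the paper omits.
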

		
		It turns out that the distributed-delay coupling cannot affect the stability of symmetric solution through \eqref{SLEP-1}.
		
		\begin{lemma}
			Suppose that $\tau\in(\tau^*,+\infty)$ holds. Then, for any $ k_1>0$, $k_2>0$ and $\alpha>0$, all the eigenvalues of \eqref{SLEP-1} lie on the left side of imaginary axis.
		\end{lemma}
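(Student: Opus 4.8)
The plan is to collapse the two real equations \eqref{SLEP-1} into a single complex equation — the complex SLEP of \eqref{EP-1}, in the spirit of \eqref{comFP-3} — and then argue by contradiction that this equation has no root with nonnegative real part, exploiting the monotonicity of $\widehat X,\widehat Y$ from Proposition \ref{propAB1} together with the standing hypothesis $\tau>\tau^*$. First I would repeat, almost verbatim, the reduction that led from \eqref{mz^*-1} to \eqref{comFP-3}: multiply the second row of \eqref{mz^*-2} by $i$, add it to the first, apply $(\widehat T_{\la^*})^{-1}$ (which exists for $\la_R^*>-\mu^*$ by Lemma 2.4 of \cite{NM}), take the inner product with $\delta(x-x^*)$, and use $\langle(\widehat T_{\la^*})^{-1}c_1^*c_2^*\delta(x-x^*),\delta(x-x^*)\rangle=\sum_{n\ge0}c_1^*c_2^*\psi_n^2(x^*)/(\gamma_n+\la^*)$ together with $1-\theta_\al^*=\la^*/(\al+\la^*)$. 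This identifies the eigenvalues $\la^*=\la_R^*+i\la_I^*$ (with $\la_R^*>-\mu^*$) of the limiting problem \eqref{mz^*-2} with the zeros of
\begin{equation*}
	\widehat{\mathcal F}^*(\la^*):=\rho_0^*-\frac{k_1\la^*}{\al+\la^*}-\tau\la^*-\sum_{n\ge0}\frac{c_1^*c_2^*\psi_n^2(x^*)}{\gamma_n+\la^*},
\end{equation*}
so it is enough to show $\widehat{\mathcal F}^*$ has no zero with $\la_R^*\ge0$.

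Assuming $\widehat{\mathcal F}^*(\la^*)=0$ with $\la_R^*\ge0$, I would first take imaginary parts. A direct computation, using the notation \eqref{AB1}, gives
\begin{equation*}
	\text{Im}\,\widehat{\mathcal F}^*(\la^*)=-\la_I^*\left(\tau+\frac{k_1\al}{(\al+\la_R^*)^2+(\la_I^*)^2}-\widehat Y\big(\la_R^*,(\la_I^*)^2\big)\right).
\end{equation*}
If $\la_I^*\neq0$, this forces $\widehat Y(\la_R^*,(\la_I^*)^2)=\tau+k_1\al/[(\al+\la_R^*)^2+(\la_I^*)^2]>\tau$ (here $k_1,\al>0$ is used). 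But comparing \eqref{AB1} with \eqref{tau*} term by term shows $\widehat Y(\la_R^*,(\la_I^*)^2)<\widehat Y(0,0)=\tau^*<\tau$ as soon as $\la_R^*\ge0$ and $\la_I^*\neq0$ — a contradiction. Hence any zero of $\widehat{\mathcal F}^*$ with $\la_R^*\ge0$ must be real.

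It then remains to rule out real $\la^*=s\ge0$. There $\widehat{\mathcal F}^*(s)=0$ is equivalent to $\Delta(s):=\widehat X(s,0)-\rho_0^*+k_1 s/(\al+s)+\tau s=0$, with $\widehat X(s,0)=\sum_{n\ge0}c_1^*c_2^*\psi_n^2(x^*)/(\gamma_n+s)$. I would note $\Delta(0)=\widehat X(0,0)-\rho_0^*>0$ by Proposition \ref{propAB1}, and
\begin{equation*}
	\Delta'(s)=-\widehat Y(s,0)+\frac{k_1\al}{(\al+s)^2}+\tau\ge\tau-\widehat Y(0,0)=\tau-\tau^*>0\qquad(s\ge0),
\end{equation*}
since $\widehat Y(s,0)\le\widehat Y(0,0)$ for $s\ge0$; so $\Delta>0$ on $[0,\infty)$ and $\widehat{\mathcal F}^*$ has no nonnegative real root. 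Combined with the previous paragraph, \eqref{SLEP-1} then has no eigenvalue with nonnegative real part, i.e.\ all of its eigenvalues lie strictly to the left of the imaginary axis.

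I expect the only real work to be bookkeeping rather than a genuine obstacle: checking carefully that the passage to the complex equation is an equivalence — the scalar factor $\langle z^*,\delta(x-x^*)\rangle$ must be nonzero on an eigenfunction, and the complex denominator $\rho_0^*-k_1(1-\theta_{\al,R}^*)-\tau\la_R^*+i(\tau\la_I^*-k_1\theta_{\al,I}^*)$ must not vanish when $\la_I^*\neq0$, which is precisely the quantity $\la_I^*\big(\tau+k_1\al/[(\al+\la_R^*)^2+(\la_I^*)^2]\big)\neq0$ appearing in the imaginary-part identity above — and that $\widehat X,\widehat Y$ and their defining series are well posed for $\la_R^*>-\mu^*$, all of which is inherited from \S\S 2--3 and Lemma 2.4 of \cite{NM}. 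Conceptually, the point is that in the symmetric mode $\widehat T_{\la_R^*}$ carries no $2k_2$ shift, so $\widehat X,\widehat Y$ do not see $k_2$ at all, while $k_1$ enters only through the delay factor $\theta_\al^*$, which on the imaginary axis merely raises the threshold that $\widehat Y$ would have to exceed; since $\tau>\tau^*=\widehat Y(0,0)\ge\widehat Y(\la_R^*,(\la_I^*)^2)$ for $\la_R^*\ge0$, that threshold is never reached.
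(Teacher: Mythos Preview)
Your proposal is correct and rests on the same two estimates the paper uses, namely $\widehat X(0,0)>\rho_0^*$ and $\widehat Y(\la_R^*,(\la_I^*)^2)\le\widehat Y(0,0)=\tau^*<\tau$ for $\la_R^*\ge0$. The difference is one of scope: the paper only rules out eigenvalues \emph{on} the imaginary axis (zero and purely imaginary), tacitly invoking a continuity/homotopy argument (``it suffices to show\dots'') to deduce that eigenvalues which are in the left half-plane at a reference configuration cannot migrate right without crossing the axis. You instead pass to the complex SLEP $\widehat{\mathcal F}^*(\la^*)=0$ and exclude the entire closed right half-plane directly --- the imaginary-part identity handles $\la_I^*\ne0$ exactly as in the paper, and your extra monotonicity computation $\Delta'(s)\ge\tau-\tau^*>0$ with $\Delta(0)>0$ disposes of real $s\ge0$. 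This buys you a self-contained argument that does not depend on knowing stability at some limiting parameter value; the paper's version is shorter but leans on that unstated continuation step.
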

		\begin{proof}
			It suffices to show that for any $ k_1>0$,  $k_2>0$ and $\alpha>0$, both zero and pure imaginary numbers are impossible to become eigenvalues of \eqref{SLEP-1}. 
			
			If $\la^*=0$ is a eigenvalue for some $(k_1, k_2,\alpha)$, then \eqref{SLEP-1} is reduced to 
			\begin{equation*}\label{SLEP-1: re}
				\widehat{X}(0,0)-\rho_0^*=0,
			\end{equation*}
			which is a contradiction with Proposition \ref{propAB1}.
			
			If $\la^*=i\la_I^*$ ($\neq0$) is a pure imaginary eigenvalue for some $(k_1, k_2,\alpha)$, then \eqref{SLEP-1} is simplified into
			\begin{equation*}
				\widehat{X}(0,(\la_I^*)^2)=\rho_0^*-\frac{ k_1(\la_I^*)^2}{\alpha^2+(\la_I^*)^2},\;\widehat{Y}(0,(\la_I^*)^2)=\tau+\frac{ k_1\alpha}{\alpha^2+(\la_I^*)^2},
			\end{equation*}
			where the second equation does not hold true owing to
			\begin{equation*}
				\tau+\dfrac{k_1\alpha}{\alpha^2+(\la_I^*)^2}>\tau>\tau^*=\widehat{Y}(0,0)>\widehat{Y}(0,(\la_I^*)^2),
			\end{equation*}
			which completes the proof.
		\end{proof}
		
		\subsection{Analysis of the eigenvalue problem \eqref{EP-2}}
		Repeating the similar procedure in \S \ref{S3.2.1}, we can derive the following real SLEP equations of the eigenvalue problem \eqref{EP-2}
		\begin{equation}\label{SLEP-2}\tag{SLEP-2}
			\begin{cases}
				\begin{split}
					&\frac{\left(\rho_0^*-k_1(1+\theta_{\alpha,R}^*)-\tau\la_R^*\right)X+\la_I^*\left(\tau\lambda_I^*+k_1\theta_{\alpha,I}^*\right)Y}{\left(\rho_0^*-k_1(1+\theta_{\alpha,R}^*)-\tau\la_R^*\right)^2+\left(\tau\lambda_I^*+k_1\theta_{\alpha,I}^*\right)^2}=1,\\
					&\frac{(\tau\lambda_I^*+k_1\theta_{\alpha,I}^*)X-\la_I^*(\rho_0^*-k_1(1+\theta_{\alpha,R}^*)-\tau\la_R^*)Y}{\left(\rho_0^*-k_1(1+\theta_{\alpha,R}^*)-\tau\la_R^*\right)^2+\left(\tau\lambda_I^*+k_1\theta_{\alpha,I}^*\right)^2}= 0,			
				\end{split}
			\end{cases}
		\end{equation}
		where $X=X(\la_R^*,(\la_I^*)^2, k_2)$ and $Y=Y(\la_R^*,(\la_I^*)^2,k_2)$ are given by \eqref{AB}.
		
		To show the stability of $(\widetilde{u}(x;\epsilon),\widetilde{v}(x;\epsilon),\widetilde{u}(x;\epsilon),\widetilde{v}(x;\epsilon))$ with respect to \eqref{EP-2}, we need to investigate the existence of zero or pure imaginary eigenvalues of \eqref{SLEP-2} with respect to $\alpha\in(0,+\infty)$ under some fixed $( k_1, k_2)\in \mathbb{R}_+^2\setminus \Gamma$, where $\Gamma$ is given by Proposition \ref{p*ei}.
		
		Suppose that $\la_R^*=\la_I^*= 0$. Then, we can reduce \eqref{SLEP-2} to
		\begin{equation}\label{SLEP-2: re}
			X(0,0, k_2)-\rho_0^*+2 k_1= c_1^*c_2^*\sum\limits_{n \ge 0} \dfrac{1}{\gamma _n+2 k_2 }\psi_n ^2(x^*)-\rho_0^*+2 k_1=0.
		\end{equation}
		
		Clearly, \eqref{SLEP-2: re} is independent of $\alpha$. In \S 3, we have shown that, if $\alpha=+\infty$, then for any $( k_1, k_2)\in \mathbb{R}_+^2\setminus \Gamma$, the eigenvalue problem has no zero eigenvalues. Thus, we can conclude that the results can be extended to the case of $\al\in(0,+\infty)$ and $( k_1, k_2)\in \mathbb{R}_+^2\setminus \Gamma$.
		
		Suppose that $\la_R^*=0$ and $\la_I^*\neq 0$. Then, \eqref{SLEP-2} can be reduced to
		\begin{subequations}\label{SLEP-2: im}
			\begin{numcases}{}
				X(0,(\la_I^*)^2, k_2)=\rho_0^*- k_1\bigg(1+\dfrac{\alpha^2}{\alpha^2+(\la_I^*)^2}\bigg)=:P((\la_I^*)^2),\label{SLEP-2: im-a}\\
				Y(0,(\la_I^*)^2,k_2)=\tau-\dfrac{k_1\alpha}{\alpha^2+(\la_I^*)^2}=:Q((\la_I^*)^2),\label{SLEP-2: im-b}	
			\end{numcases}
		\end{subequations}
		where (see \eqref{AB})
		\begin{equation*}
			X(0,(\la_I^*)^2, k_2)=\sum\limits_{n \ge 0} \frac{(\gamma _n +2 k_2) c_1^*c_2^*\psi_n ^2(x^*)}{(\gamma _n  +2 k_2)^2 + (\lambda _I^*)^2},\;
			Y(0,(\la_I^*)^2,k_2)=\sum\limits_{n \ge 0} \frac{c_1^*c_2^*\psi_n ^2(x^*)}{(\gamma _n  +2 k_2)^2 + (\lambda _I^*)^2}.			
		\end{equation*}
		
		We have the following results on the properties of $P((\la_I^*)^2)$ and $Q((\la_I^*)^2)$ (see Fig. \ref{Fig. ABPQ}). 
		
		\begin{figure}
			\centering  
			\subfloat{
				\begin{tikzpicture}
					\draw[->,thick] (0,0) --(4,0) node[right] {$(\la_I^*)^2$};
					\draw[->,thick] (0,0) --(0,4);
					\draw  (0,0) node[left] {$O$};
					\draw [thick] (4,0.1) .. controls (2,0.3) and (0.6,0.5).. (0,3.7);
					\draw [thick] (4,2.9) .. controls (2,2.8) and (0.7,2.6)..  (0,0.5);
					\draw [densely dashed](4,3)--(0,3) node [left] {$\rho_0^*-k_1$};
					\draw node at (3.5,0.7) {$X(0,(\la_I^*)^2, k_2)$};
					\draw node at (3,2.4) {$P((\la_I^*)^2)$};
					\draw node at (-0.7,0.6) {$\rho_0^*-2 k_1$};
				\end{tikzpicture}
			}\hspace{10pt}	
			\subfloat{
				\begin{tikzpicture}
					\draw[->,thick] (0,0) --(4,0) node[right] {$(\la_I^*)^2$};
					\draw[->,thick] (0,0) --(0,4);
					\draw  (0,0) node[left] {$O$};
					\draw [thick] (4,0.1) .. controls (2,0.3) and (0.6,0.5).. (0,3);
					\draw [thick] (4,3.5) .. controls (2,3.4) and (0.6,3.3)..  (0,1);
					\draw [densely dashed](4,3.6)--(0,3.6) node [left] {$\tau$};
					\draw node at (3,0.9) {$Y(0,(\la_I^*)^2, k_2)$};
					\draw node at (3,2.9) {$Q((\la_I^*)^2)$};
					\draw node at (-0.7,1) {$\tau-\frac{ k_1}{\alpha}$};
				\end{tikzpicture}
			}	
			\caption{Graph of $X(0,(\la_I^*)^2, k_2)$, $P((\la_I^*)^2)$, $Y(0,(\la_I^*)^2,k_2)$ and $Q((\la_I^*)^2)$.}
			\label{Fig. ABPQ}
		\end{figure}
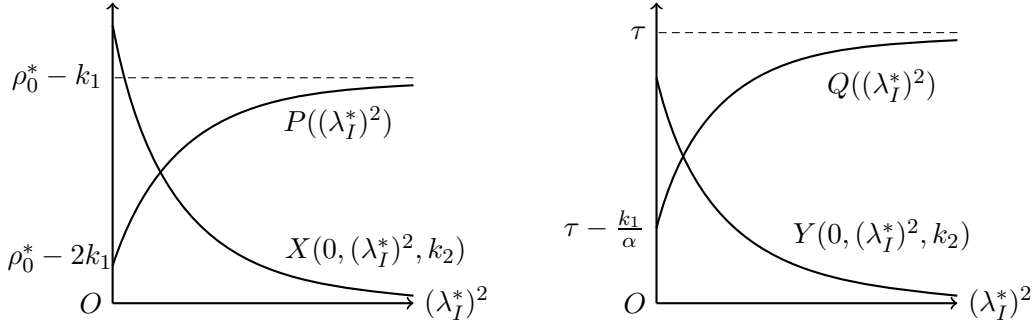
		
		\begin{proposition}\label{prop.PQ}
			Let $P((\la_I^*)^2)$ and $Q((\la_I^*)^2)$ be defined by \eqref{SLEP-2: im}. Then, the following conclusions hold true:
			\begin{equation*}
				P(0)=\rho_0^*-2 k_1,\;\lim\limits_{(\la_I^*)^2\rightarrow+\infty}P((\la_I^*)^2)=\rho_0^*- k_1,\;
				Q(0)=\tau-\frac{ k_1}{\alpha},\;\lim\limits_{(\la_I^*)^2\rightarrow+\infty}Q((\la_I^*)^2)=\tau,
			\end{equation*}
			and
			\begin{equation*}
				\frac{\pa P}{\pa (\la_I^*)^2}=\frac{ k_1\alpha^2}{(\alpha^2+(\la_I^*)^2)^2}>0,\;\;\frac{\pa Q}{\pa (\la_I^*)^2}=\frac{ k_1\alpha}{(\alpha^2+(\la_I^*)^2)^2}>0.
			\end{equation*}
		\end{proposition}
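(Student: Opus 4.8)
The plan is simply to read off all six assertions by direct substitution and one elementary differentiation apiece, since $P$ and $Q$ are explicit rational functions of the single scalar $s:=(\la_I^*)^2\ge0$, with $k_1>0$, $\alpha>0$ and $\tau$ treated as fixed positive parameters.

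First I would evaluate at the endpoint $s=0$: inserting this into the definitions in \eqref{SLEP-2: im} gives $P(0)=\rho_0^*-k_1(1+\alpha^2/\alpha^2)=\rho_0^*-2k_1$ and $Q(0)=\tau-k_1\alpha/\alpha^2=\tau-k_1/\alpha$. For the behaviour at infinity, observe that $\alpha^2/(\alpha^2+s)\to0$ and $k_1\alpha/(\alpha^2+s)\to0$ as $s\to+\infty$, whence $P(s)\to\rho_0^*-k_1$ and $Q(s)\to\tau$.

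Next I would compute the derivatives. Writing $P(s)=\rho_0^*-k_1-k_1\alpha^2(\alpha^2+s)^{-1}$ and $Q(s)=\tau-k_1\alpha(\alpha^2+s)^{-1}$, differentiation in $s$ yields $\tfrac{\pa P}{\pa s}=k_1\alpha^2(\alpha^2+s)^{-2}$ and $\tfrac{\pa Q}{\pa s}=k_1\alpha(\alpha^2+s)^{-2}$, both strictly positive because $k_1,\alpha>0$. Combined with the endpoint values, this is exactly the monotone picture recorded in Fig.~\ref{Fig. ABPQ}, namely that $P$ increases from $\rho_0^*-2k_1$ up to $\rho_0^*-k_1$ and $Q$ increases from $\tau-k_1/\alpha$ up to $\tau$.

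There is no genuine obstacle here: the statement is a bookkeeping lemma isolating the elementary shape of the two coupling-corrected curves, and its sole purpose is to prepare the geometric intersection analysis against $X(0,s,k_2)$ and $Y(0,s,k_2)$ in \eqref{SLEP-2: im} that follows. The only minor point to keep straight is the algebraic identity $1+\alpha^2/(\alpha^2+s)=(2\alpha^2+s)/(\alpha^2+s)$ if one prefers to display $P$ as a single fraction, but this is purely cosmetic.
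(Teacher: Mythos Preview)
Your proposal is correct and matches the paper's approach: the paper simply declares the proof trivial and omits it, and your direct substitution and differentiation is exactly the elementary verification one would supply.
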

		\begin{proof}
			The proof is trivial and we shall omit it.
		\end{proof}
		
		Then, by Propositions \ref{prop.AB} and \ref{prop.PQ}, we have the following results (see Fig. \ref{Fig. ABPQ}).
		\begin{lemma}\label{le.nonex.} 
			Suppose that $\tau\in(\tau^*,+\infty)$ holds. Define
			\begin{equation}\label{G12312}
				\begin{split}
					&\Gamma_1:=\left\lbrace ( k_1, k_2)|k_1\in(0,\rho_0^*/2),X(0,0, k_2)<\rho_0^*-2 k_1 \right\rbrace,\\
					&\Gamma_2:=\left\lbrace ( k_1, k_2)|k_1\in(0,\rho_0^*/2),X(0,0, k_2)>\rho_0^*-2 k_1 \right\rbrace,\\
					&\Gamma_{3-1}:=(\rho_0^*/2,\rho_0^*)\times(0,\infty),\;\Gamma_{3-2}:=[\rho_0^*,\infty)\times(0,\infty).
				\end{split}
			\end{equation}
			If one of the following conditions holds: 
			\begin{enumerate}[leftmargin=4.5em]
				\item[$(1).$]\;$P(0)=\rho_0^*-2 k_1>X(0,0, k_2)$, and $\al\in(0,+\infty)$ $($see region $\Gamma_{1}$$)$;
				\item[$(2).$]\;$P(\infty)=\rho_0^*- k_1<0$, and $\al\in(0,+\infty)$ $($see region $\Gamma_{3-2}$$)$;
				\item[$(3).$]\;$Q(0)=\tau-\dfrac{ k_1}{\alpha}\geq Y(0,0, k_2)$, or equivalently $\al\in[\al_0,+\infty)$, where \begin{equation}\label{alp0}
					\alpha_0:=\frac{k_1}{\tau-Y(0,0, k_2)}>0,
				\end{equation}
			\end{enumerate}
			then the eigenvalue problem \eqref{SLEP-2} has no purely imaginary eigenvalues.
		\end{lemma}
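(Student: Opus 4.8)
The plan is to reduce everything to the scalar characterization of purely imaginary eigenvalues that has just been derived. For a purely imaginary eigenvalue $\la^*=i\la_I^*$ with $\la_I^*\neq0$ one has $\la_R^*=0$, and \eqref{SLEP-2} is then equivalent to the pair \eqref{SLEP-2: im}, i.e.
\begin{equation*}
	X(0,(\la_I^*)^2,k_2)=P((\la_I^*)^2),\qquad Y(0,(\la_I^*)^2,k_2)=Q((\la_I^*)^2).
\end{equation*}
Hence it suffices to show that, under each of $(1)$, $(2)$, $(3)$, at least one of these two equations has no root $(\la_I^*)^2>0$. Throughout I will use that, by Proposition \ref{prop.AB}, the maps $(\la_I^*)^2\mapsto X(0,(\la_I^*)^2,k_2)$ and $(\la_I^*)^2\mapsto Y(0,(\la_I^*)^2,k_2)$ are strictly decreasing and strictly positive (each being a series of positive terms), while by Proposition \ref{prop.PQ} the maps $(\la_I^*)^2\mapsto P((\la_I^*)^2)$ and $(\la_I^*)^2\mapsto Q((\la_I^*)^2)$ are strictly increasing, with $P(0)=\rho_0^*-2k_1$, $P(+\infty)=\rho_0^*-k_1$, $Q(0)=\tau-k_1/\alpha$ and $Q(+\infty)=\tau$.

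In case $(1)$, monotonicity gives, for every $(\la_I^*)^2\ge0$,
\begin{equation*}
	X(0,(\la_I^*)^2,k_2)\le X(0,0,k_2)<\rho_0^*-2k_1=P(0)\le P((\la_I^*)^2),
\end{equation*}
so the first equation in \eqref{SLEP-2: im} is never satisfied; the argument does not involve $\alpha$, which is why it holds for all $\alpha\in(0,+\infty)$. In case $(2)$, since $P$ is strictly increasing and $P(+\infty)=\rho_0^*-k_1<0$, we get $P((\la_I^*)^2)<0$ for every finite $(\la_I^*)^2$, whereas $X(0,(\la_I^*)^2,k_2)>0$; again the first equation in \eqref{SLEP-2: im} cannot hold. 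In either case there is no purely imaginary eigenvalue.

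For case $(3)$ I would first check that $\alpha_0$ in \eqref{alp0} is a well-defined positive number: from $\tau\in(\tau^*,+\infty)$, \eqref{tau*} and the monotonicity of $Y$ in $k_2$ (Proposition \ref{prop.AB}) we have $\tau>\tau^*=Y(0,0,0)>Y(0,0,k_2)$, hence $\tau-Y(0,0,k_2)>0$. The condition $\alpha\in[\alpha_0,+\infty)$ is exactly $Q(0)=\tau-k_1/\alpha\ge Y(0,0,k_2)$, and then, for every $(\la_I^*)^2>0$,
\begin{equation*}
	Y(0,(\la_I^*)^2,k_2)<Y(0,0,k_2)\le Q(0)\le Q((\la_I^*)^2),
\end{equation*}
using the strict monotonicity of $Y$ and the monotonicity of $Q$. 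Thus the second equation of \eqref{SLEP-2: im} fails for every $(\la_I^*)^2>0$, and no purely imaginary eigenvalue exists. Combining the three cases proves the lemma.

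This argument is little more than careful bookkeeping with the inequalities supplied by Propositions \ref{prop.AB} and \ref{prop.PQ}, so I do not expect a genuine obstacle. The two points that deserve a line of justification are the well-definedness and positivity of $\alpha_0$ in case $(3)$ (handled above via $\tau>\tau^*$), and the remark that the passage from \eqref{SLEP-2} to \eqref{SLEP-2: im} is legitimate because in each regime the denominator $\bigl(\rho_0^*-k_1(1+\theta_{\alpha,R}^*)-\tau\la_R^*\bigr)^2+\bigl(\tau\la_I^*+k_1\theta_{\alpha,I}^*\bigr)^2$ is nonzero — in cases $(1)$ and $(2)$ because $P((\la_I^*)^2)<0$, and in case $(3)$ because $\alpha>k_1/\tau$ forces $Q((\la_I^*)^2)>0$.
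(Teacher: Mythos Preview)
Your argument is correct and matches the paper's approach exactly: the paper itself gives no detailed proof of this lemma, simply writing ``by Propositions~\ref{prop.AB} and~\ref{prop.PQ}, we have the following results (see Fig.~\ref{Fig. ABPQ})'', and your proof is a faithful fleshing-out of precisely that monotonicity comparison.

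One small slip in your closing remark on the denominator: in case~$(1)$ you have $k_1<\rho_0^*/2$, so $P(0)=\rho_0^*-2k_1>0$ and hence $P((\la_I^*)^2)>0$, not $<0$. The conclusion (denominator nonzero) still holds for the same reason, so this does not affect the proof.
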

		
		Then, by Lemma \ref{le.nonex.}, \eqref{G12312} and the continuity of eigenvalues with respect to $\ep$ up to $\ep\to0^+$, one can immediately obtain the following results.
		\begin{theorem}\label{cor.ue} 
			Suppose that $\tau\in(\tau^*,+\infty)$ holds. Then, there exists a $\ep_0>0$, such that for any fixed $\ep\in(0,\ep_0)$,  regardless of $\al\in(0,+\infty)$, $(\widetilde{u}(x;\epsilon),\widetilde{v}(x;\epsilon),\widetilde{u}(x;\epsilon),\widetilde{v}(x;\epsilon))$ is always unstable when $( k_1, k_2)\in \Gamma_1$, while always stable when $( k_1, k_2)\in \Gamma_{3-2}$.
		\end{theorem}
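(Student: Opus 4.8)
The plan is to reduce the stability of the symmetric steady state with respect to \eqref{C-LE-D-E} to the two decoupled eigenvalue problems \eqref{EP-1} and \eqref{EP-2} (the elimination \eqref{W w Z z} of $W_1,W_2$ and the symmetric/antisymmetric splitting \eqref{nform} are legitimate for every $\la\ne-\al$, and $\la=-\al<0$ is irrelevant for stability), and then to argue that, for $(k_1,k_2)$ in $\Gamma_1$ or in $\Gamma_{3-2}$, \emph{no} eigenvalue of either problem ever reaches the imaginary axis as $\al$ ranges over $(0,+\infty)$; a connectedness argument in $\al$ anchored at the non-delayed endpoint $\al=+\infty$ (handled in Theorem \ref{th.2-1}) then fixes the sign of the whole spectrum. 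By the lemma of \S\ref{S3.2.1} on \eqref{SLEP-1}, under $\tau>\tau^*$ all eigenvalues of \eqref{SLEP-1} lie strictly in the left half-plane for all $k_1,k_2,\al>0$; hence the singular limit of \eqref{EP-1} carries no eigenvalue on or to the right of the imaginary axis, and — the slow eigenvalues being continuous in $\ep$ up to $\ep\to0^+$ while the boundary-layer modes keep $\mathrm{Re}\,\la\le-\mu^*$ — the same holds for \eqref{EP-1} at small $\ep>0$. So only \eqref{EP-2} can destabilize the symmetric solution.

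Next I would exclude critical eigenvalues of \eqref{EP-2}. A zero eigenvalue of \eqref{SLEP-2} forces \eqref{SLEP-2: re}, i.e.\ $X(0,0,k_2)=\rho_0^*-2k_1$, which is impossible both in $\Gamma_1$ (there $X(0,0,k_2)<\rho_0^*-2k_1$) and in $\Gamma_{3-2}$ (there $k_1\ge\rho_0^*$, so $\rho_0^*-2k_1\le-\rho_0^*<0<X(0,0,k_2)$). A purely imaginary eigenvalue forces \eqref{SLEP-2: im-a}--\eqref{SLEP-2: im-b}; Lemma \ref{le.nonex.}$(1)$ excludes this throughout $\Gamma_1$ — its hypothesis $P(0)=\rho_0^*-2k_1>X(0,0,k_2)$ is precisely the defining inequality of $\Gamma_1$ — Lemma \ref{le.nonex.}$(2)$ excludes it in $\Gamma_{3-2}$ when $k_1>\rho_0^*$, and on the edge $k_1=\rho_0^*$ one has $P((\la_I^*)^2)=-\rho_0^*\al^2/(\al^2+(\la_I^*)^2)<0<X(0,(\la_I^*)^2,k_2)$, so \eqref{SLEP-2: im-a} still fails. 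Thus in either region the singular limit \eqref{mz^*-2} of \eqref{EP-2} has no eigenvalue on the imaginary axis for any $\al\in(0,+\infty)$, and, transferring to small $\ep>0$ as above, neither does \eqref{EP-2}.

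Now fix small $\ep\in(0,\ep_0)$ and let $\al$ range over the compactified interval $(0,+\infty]$, with $\al=+\infty$ the non-delayed system \eqref{P-1} (the extended-kernel/$\delta$-limit computation at the start of \S3 makes this a genuine endpoint, continuous in the parameter $1/\al$, since $\theta_\al\to1$ turns \eqref{EP-1}--\eqref{EP-2} into \eqref{P-L-2a}--\eqref{P-L-2b}). The finitely many eigenvalues of \eqref{EP-1}$\cup$\eqref{EP-2} with $\mathrm{Re}\,\la\ge-\mu^*$ vary continuously with $\al$ and, by the above, none crosses the imaginary axis, so the number with $\mathrm{Re}\,\la\ge0$ is constant on $(0,+\infty]$. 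For $(k_1,k_2)\in\Gamma_{3-2}\subset\Gamma_3^\ep$ this number is $0$ at $\al=+\infty$ by Theorem \ref{th.2-1}$(1)$, hence $0$ for all $\al$: the symmetric solution is stable throughout $\Gamma_{3-2}$, regardless of $\al$. For $(k_1,k_2)\in\Gamma_1$, Proposition \ref{prop.AB} together with \eqref{HK11} identifies $\Gamma_1=\{k_1<\rho_0^*/2,\ k_2>\xi(k_1)\}$; since $\widehat\xi^\ep(k_1)\to\xi(k_1)$, for $\ep$ small one has $k_2>\widehat\xi^\ep(k_1)$, so Theorem \ref{th.2-1}$(2)$ makes the number $\ge1$ at $\al=+\infty$, hence $\ge1$ for all $\al$: the symmetric solution is unstable throughout $\Gamma_1$, regardless of $\al$. (The instability can also be seen without the endpoint: the complex SLEP of \eqref{EP-2}, obtained as in \eqref{comFP-3}, is $\mathcal F_2^*(\la^*)=\rho_0^*-k_1\bigl(1+\tfrac{\al}{\al+\la^*}\bigr)-\tau\la^*-\sum_{n\ge0}\tfrac{c_1^*c_2^*\psi_n^2(x^*)}{\gamma_n+2k_2+\la^*}$; on the real axis it is $>0$ at $\la^*=0$ in $\Gamma_1$ and $\to-\infty$ as $\la^*\to+\infty$, hence has a positive real root, i.e.\ a positive real eigenvalue of \eqref{mz^*-2} that persists for small $\ep$ by the implicit function theorem. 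The same $\mathcal F_2^*$ settles $\Gamma_{3-2}$ directly: for $k_1\ge\rho_0^*$ and $\mathrm{Re}\,\la^*\ge0$ one has $\mathrm{Re}\,\mathcal F_2^*\le(\rho_0^*-k_1)-\tau\,\mathrm{Re}\,\la^*-X(\mathrm{Re}\,\la^*,(\mathrm{Im}\,\la^*)^2,k_2)<0$, so $\mathcal F_2^*\ne0$.)

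The step I expect to be the main obstacle is the \textbf{uniform-in-$\al$ passage from the singular limit $\ep=0$ back to a fixed small $\ep>0$}: one must show that a single $\ep_0$ works simultaneously for every $\al\in(0,+\infty)$ (and, on the stable side, uniformly on compact subsets of $\Gamma_{3-2}$, since near $\partial\Gamma_1=\mathcal C$ the margin $\rho_0^*-2k_1-X(0,0,k_2)$ degenerates). This requires the Ni--Mimura machinery — the $\ep$-dependent characteristic function $\mathcal F_2(\la,k_1,k_2,\al,\ep)$ for \eqref{EP-2}, its convergence $\mathcal F_2\to\mathcal F_2^*$ as $\ep\to0^+$, and the invertibility of $\widehat T_\la^\ep$ for $\mathrm{Re}\,\la>-\mu^*$ — to hold uniformly as $\al$ runs over $(0,+\infty]$, together with the facts that only finitely many eigenvalues have $\mathrm{Re}\,\la\ge-\mu^*$ and that they stay in a fixed bounded subset of $\C$. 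Granting this uniformity, the remaining work — deriving $\mathcal F_2^*$ exactly as \eqref{comFP-3} was derived, and the elementary sign estimates above — is a routine variant of Theorem \ref{th.2-1} and \S\ref{S3.2.1}.
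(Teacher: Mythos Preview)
Your proposal is correct and follows essentially the same route as the paper: the paper's proof of this theorem is a single sentence invoking Lemma~\ref{le.nonex.} (conditions (1) and (2) exactly match $\Gamma_1$ and $\Gamma_{3-2}$) together with ``continuity of eigenvalues with respect to $\ep$ up to $\ep\to0^+$'', which is precisely your strategy of ruling out imaginary-axis eigenvalues of \eqref{SLEP-2} and then anchoring at the non-delayed endpoint $\al=+\infty$ via Theorem~\ref{th.2-1}. Your write-up is considerably more detailed than the paper's---in particular your handling of the edge $k_1=\rho_0^*$ (where Lemma~\ref{le.nonex.}(2) is borderline), your alternative direct argument via the complex SLEP function $\mathcal F_2^*$, and your identification of the uniform-in-$\al$ passage from $\ep=0$ to small $\ep>0$ as the main technical point are all things the paper leaves implicit.
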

		
		In view of $(3)$ in Lemma \ref{le.nonex.} and Theorem \ref{cor.ue}, to expect zero eigenvalue or purely imaginary eigenvalues, it is sufficient to consider the case when $( k_1, k_2)\in \Gamma_2\cup\Gamma_{3-1}$ and $\alpha\in(0,\alpha_0)$. 
		
		It is clear that when $( k_1, k_2)\in \Gamma_2\cup\Gamma_{3-1}$,  \eqref{SLEP-2: im-a} always has a unique positive root, denoted by $\la_{I,1}^*(\al)$, which is defined in $(0,+\infty)$. On the other hand, when $( k_1, k_2)\in \Gamma_2\cup\Gamma_{3-1}$ and $\alpha\in(0,\alpha_0)$, \eqref{SLEP-2: im-b} has a unique root (non-zero), denoted by $\la_{I,2}^*(\al)$, which is defined in $(0,\al_0)$.

		We have the following lemma on the properties of $\la_{I,1}^*(\alpha)$ in $(0,+\infty)$ (see Fig. \ref{Fig. alpha_2}).
		\begin{lemma} \label{le.la1}
			Suppose that $\tau\in(\tau^*,+\infty)$, $( k_1, k_2)\in\Gamma_2\cup\Gamma_{3-1}$ and $\al\in(0,+\infty)$. Let $\la_{I,1}^*(\alpha)$ be the solution of \eqref{SLEP-2: im-a}. Then, $\la_{I,1}^*(\alpha)$ is increasing with respect to $\alpha\in(0,+\infty)$. Moreover, the following conclusions hold true:
			\begin{enumerate}
				\item Suppose that $k_1\ge 2\rho_0^*/3$ holds. Then, for any $\al\in(0,+\infty)$, $\la_{I,1}^*(\alpha)>\alpha$.
				\item Suppose that  $k_1\in(0,\rho_0^*/2)\cup(\rho_0^*/2,2\rho_0^*/3)$ holds. Then, there exists a $\widehat{k}_2^*>0$ such that if $ k_2\ge \widehat{k}_2^*$, then for any $\alpha\in(0,+\infty)$, $\la_{I,1}^*(\alpha)<\alpha$; while if $k_2<\widehat{k}_2^*$, then there is a unique $\alpha_1\in(0,+\infty)$, such that 
				\begin{equation}\label{la_1^*(alpha)}
					\la_{I,1}^*(\alpha)
					\begin{cases}
						\begin{split}
							>\alpha,~&\text{if}~\alpha\in(0,\alpha_1),\\
							=\alpha,~&\text{if}~\alpha=\alpha_1,\\
							<\alpha,~&\text{if}~\alpha\in(\al_1,+\infty).\\
						\end{split}
					\end{cases}
				\end{equation}
				In particular, $\widehat{k}_2^*<\xi( k_1)$, where $\xi(k_1)$ is given by \eqref{H=0}.	
			\end{enumerate}
		\end{lemma}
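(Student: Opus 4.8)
The plan is to reformulate \eqref{SLEP-2: im-a} as a scalar root‑finding problem and exploit the monotonicities of $X$ from Proposition \ref{prop.AB}. Write $s:=(\la_I^*)^2$ and, for $\al>0$, set
\[
F(s,\al):=X(0,s,k_2)-P(s),\qquad P(s)=\rho_0^*-k_1\Bigl(1+\frac{\al^2}{\al^2+s}\Bigr),
\]
so that $s_1(\al):=\bigl(\la_{I,1}^*(\al)\bigr)^2$ is precisely the root of $F(\cdot,\al)$. First I would note that $F(\cdot,\al)$ is strictly decreasing in $s$, since $\pa_sX<0$ (Proposition \ref{prop.AB}) and $\pa_sP>0$ (Proposition \ref{prop.PQ}); together with $F(0,\al)=X(0,0,k_2)-(\rho_0^*-2k_1)>0$ on $\Gamma_2\cup\Gamma_{3-1}$ and $F(s,\al)\to k_1-\rho_0^*<0$ as $s\to\infty$ (here $k_1<\rho_0^*$), this re‑confirms that $s_1(\al)$ is the unique positive root. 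For the monotonicity assertion I would differentiate: $\pa_\al F=-\pa_\al P=\dfrac{2k_1 s\al}{(\al^2+s)^2}>0$, hence by the implicit function theorem $s_1'(\al)=-\pa_\al F/\pa_sF>0$, so $\la_{I,1}^*(\al)$ is increasing on $(0,\infty)$.

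The key device for comparing $\la_{I,1}^*(\al)$ with $\al$ is to evaluate $F$ on the diagonal $s=\al^2$, where the coupling contribution to $P$ degenerates to a constant: $P(\al^2)=\rho_0^*-k_1(1+\tfrac12)=\rho_0^*-\tfrac{3k_1}{2}$, independent of $\al$. Since $F(\cdot,\al)$ is strictly decreasing with root $s_1(\al)$, we get $s_1(\al)\gtreqless\al^2$ (equivalently $\la_{I,1}^*(\al)\gtreqless\al$) exactly according to the sign of
\[
\Phi(\al):=F(\al^2,\al)=X(0,\al^2,k_2)-\rho_0^*+\tfrac{3k_1}{2}.
\]
Because $\al\mapsto\al^2$ is increasing and $X(0,\cdot,k_2)$ is continuous and strictly decreasing from $X(0,0,k_2)$ down to $0$ (Proposition \ref{prop.AB}), $\Phi$ is continuous and strictly decreasing on $(0,\infty)$ with $\Phi(0^+)=X(0,0,k_2)-\rho_0^*+\tfrac{3k_1}{2}$ and $\Phi(+\infty)=\tfrac{3k_1}{2}-\rho_0^*$.

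From here the case analysis is routine. If $k_1\ge 2\rho_0^*/3$ then $\Phi(+\infty)\ge 0$, so strict monotonicity forces $\Phi(\al)>0$ for every finite $\al>0$, i.e. $\la_{I,1}^*(\al)>\al$; this is Part 1. If $0<k_1<2\rho_0^*/3$ (and $k_1\ne\rho_0^*/2$) then $\Phi(+\infty)<0$, and I would use that $k_2\mapsto X(0,0,k_2)=\sum_{n\ge0}c_1^*c_2^*\psi_n^2(x^*)/(\gamma_n+2k_2)$ is continuous and strictly decreasing, with $X(0,0,0)>\rho_0^*>\rho_0^*-\tfrac{3k_1}{2}>0$ and $X(0,0,k_2)\to0$, to obtain a unique $\widehat k_2^*>0$ with $X(0,0,\widehat k_2^*)=\rho_0^*-\tfrac{3k_1}{2}$; equivalently $\Phi(0^+)\le 0\iff k_2\ge\widehat k_2^*$. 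When $k_2\ge\widehat k_2^*$, $\Phi(0^+)\le 0$ and strict monotonicity give $\Phi<0$ on $(0,\infty)$, hence $\la_{I,1}^*(\al)<\al$ for all $\al$; when $k_2<\widehat k_2^*$, $\Phi(0^+)>0>\Phi(+\infty)$ yields, by the intermediate value theorem and strict monotonicity, a unique $\al_1\in(0,\infty)$ with $\Phi(\al_1)=0$, and reading off the sign of $\Phi$ produces exactly \eqref{la_1^*(alpha)}. Finally, for $k_1\in(0,\rho_0^*/2)$ the value $\xi(k_1)$ is determined by $X(0,0,\xi(k_1))=\rho_0^*-2k_1$ (from \eqref{HK11}), and since $k_1>0$ gives $\rho_0^*-2k_1<\rho_0^*-\tfrac{3k_1}{2}=X(0,0,\widehat k_2^*)$, strict monotonicity of $X(0,0,\cdot)$ yields $\widehat k_2^*<\xi(k_1)$.

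The one genuinely nontrivial step is noticing that restricting \eqref{SLEP-2: im-a} to the locus $\la_I^*=\al$ makes its right‑hand side $\al$‑independent; after that, the whole argument is driven by the scalar monotonicities of $X(0,\cdot,k_2)$ and $X(0,0,\cdot)$ supplied by Proposition \ref{prop.AB}. The remaining care is purely bookkeeping — keeping track of whether $(k_1,k_2)$ lies in $\Gamma_2$ or $\Gamma_{3-1}$ so that $s_1(\al)$ is well defined and $k_1<\rho_0^*$, and observing that the inequality $\widehat k_2^*<\xi(k_1)$ is claimed only on the sub‑range $k_1\in(0,\rho_0^*/2)$ on which $\xi$ is defined.
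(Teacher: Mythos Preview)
Your proof is correct and follows essentially the same approach as the paper. The paper introduces the rescaled variable $\eta=(\la_I^*/\al)^2$ and the function $\widehat F_1(\eta,\al,k_1,k_2)=X(0,\eta\al^2,k_2)-\rho_0^*+k_1\bigl(1+\tfrac{1}{1+\eta}\bigr)$, then evaluates at $\eta=1$ to obtain exactly your $\Phi(\al)=\widehat F_1(1,\al,k_1,k_2)$; your formulation in terms of $s=(\la_I^*)^2$ and $F(s,\al)$ is a direct reparametrization of the same argument, with the identical key observation that on the diagonal the $\al$-dependent coupling term collapses to the constant $\rho_0^*-\tfrac{3k_1}{2}$.
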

		
		\begin{proof}	
			For $( k_1, k_2)\in \Gamma_2\cup\Gamma_{3-1}$ and $\alpha\in(0,+\infty)$, we define
			\begin{equation}\label{conx-a}
				\widehat F_1(\eta,\alpha,k_1, k_2):=X(0,\eta\al^2,k_2)-\rho_0^*+ k_1\bigg(1+\dfrac{1}{1+\eta}\bigg),\;\eta_1^*(\al)=\bigg(\dfrac{\la_{I,1}^*(\al)}{\al}\bigg)^2,
			\end{equation}
			where $\eta>0$.
			
			Thus, by \eqref{SLEP-2: im-a}, we have $\widehat F_1(\eta_1^*(\alpha),\alpha,k_1, k_2)=0$. Differentiating it with respect to $\alpha$, we have 
			\begin{equation*}
				\frac{\pa \la_{I,1}^*(\alpha)}{\pa \alpha}=-\frac{ k_1\alpha\la_{I,1}^*(\alpha)}{(\alpha^*+(\la_{I,1}^*(\alpha))^2)\frac{\pa X(0,(\la_I^*)^2, k_2)}{\pa (\la_I^*)^2} - k_1\alpha}>0,
			\end{equation*} 
			where $\frac{\pa X(0,(\la_I^*)^2, k_2)}{\pa (\la_I^*)^2}<0$ is used. Then, $\la_{I,1}^*(\alpha)$ is increasing with respect to $\alpha\in(0,+\infty)$.

			By Proposition \ref{prop.AB}, we have
			\begin{equation*}
				\dfrac{\pa \widehat F_1}{\pa \eta}=\alpha^2\dfrac{\pa X}{\pa(\la_I^*)^2}(0,\eta\alpha^2,k_2)-\dfrac{ k_1}{(1+\eta)^2}<0,
			\end{equation*}
			which implies that
			\begin{equation}\label{eta1}
				\widehat F_1(\eta,\alpha,k_1, k_2)
				\begin{cases}
					\begin{split}
						>0,~&\mbox{if}~\eta<\eta_1^*(\alpha),\\
						=0,~&\mbox{if}~\eta=\eta_1^*(\alpha),\\
						<0,~&\mbox{if}~\eta>\eta_1^*(\alpha).\\
					\end{split}
				\end{cases}
			\end{equation}
			
			One can check that
			\begin{equation}\label{F_1}
				\widehat F_1(1,\alpha,k_1, k_2)=X(0,\alpha^2,k_2)+\dfrac{3 k_1}{2}-\rho_0^*,
			\end{equation}
			which is decreasing with respect to $\alpha\in(0,+\infty)$ due to Proposition \ref{prop.AB}.
			
			Suppose that $k_1\ge 2\rho_0^*/3$ holds. Since $X(0,\alpha^2,k_2)$ is positive, we have $\widehat F_1(1,\alpha,k_1, k_2)>0$. Then, by \eqref{eta1}, $1<\eta_1^*(\alpha)$ or equivalently $\alpha<\la_{I,1}^*(\alpha)$.
			
			Suppose that $0<k_1<2\rho_0^*/3$ and $ k_1\neq\rho_0^*/2$ hold. For one hand, we have
			\begin{equation*}
				\lim\limits_{\alpha\to+\infty}\widehat F_1(1,\alpha,k_1, k_2)=\dfrac{3 k_1}{2}-\rho_0^*<0.
			\end{equation*}
			
			On the other hand, we have
			\begin{equation}
				\lim\limits_{\alpha\to 0^+}\widehat F_1(1,\alpha,k_1, k_2)=X(0,0, k_2)+\dfrac{3 k_1}{2}-\rho_0^*\;
				\begin{cases}
					\begin{split}
						&>0,\;\;\mbox{if}~ k_2\in(0,\widehat{k}_2^*),\\
						&=0,\;\;\mbox{if}~ k_2=\widehat{k}_2^*,\\
						&<0,\;\;\mbox{if}~ k_2\in(\widehat{k}_2^*,\infty),
					\end{split}
				\end{cases}
			\end{equation}
			where $\widehat{k}_2^*=\widehat{k}_2^*( k_1)$ is the unique solution of
			\begin{equation}\label{2 k_2*}
				X(0,0, \widehat{k}_2^*)=\rho_0^*-\dfrac{3 k_1}{2}.
			\end{equation} 
			The existence and uniqueness of $\widehat{k}_2^*$ is really induced by the monotonicity of $X(0,0, k_2)$ on $ k_2$ (see Proposition \ref{prop.AB}) and the facts that
			\begin{equation*}
				X(0,0,0)>\rho_0^*>\rho_0^*-\dfrac{3 k_1}{2},\;\;X(0,0,\infty)=0<\rho_0^*-\dfrac{3 k_1}{2}.
			\end{equation*}
			Moreover, by comparing \eqref{H=0} with \eqref{2 k_2*}, we conclude that $\widehat{k}_2^*<\xi( k_1)$ for $k_1\in(0,\rho_0^*/2)$.
			
			In summary, we have
			
			(1). In case of $ k_2\in(0,\widehat{k}_2^*)$, we have $\lim\limits_{\alpha\to 0^+}\widehat F_1(1,\alpha,k_1, k_2)>0$, which indicates that there exists a unique solution $\alpha_1=\alpha_1( k_1, k_2)\in(0,\infty)$ such that
			\begin{equation}\label{alpha_1}
				\widehat F_1(1,\alpha,k_1, k_2)\begin{cases}
					\begin{split}
						&>0,\;\;\mbox{if}~\al\in(0,\al_1),\\
						&=0,\;\;\mbox{if}~\al=\al_1,\\
						&<0,\;\;\mbox{if}~\al\in(\al_1,+\infty),
					\end{split}
				\end{cases}
			\end{equation}
			since $\frac{\pa \widehat F_1(1,\alpha,k_1, k_2)}{\pa\alpha}<0$. Then, for any $\al\in(0,\al_1)$, by \eqref{alpha_1} and \eqref{eta1}, we have $1<\eta_1^*(\alpha)$ or equivalently $\alpha<\la_{I,1}^*(\alpha)$. For any $\al\in(\al_1,+\infty)$, we have $1>\eta_1^*(\alpha)$ or equivalently $\alpha>\la_{I,1}^*(\alpha)$. In particular, at $\al=\al_1$, we have $\alpha=\la_{I,1}^*(\alpha)$. 
			
			Moreover, one can check that $\alpha_1( k_1, k_2)$ is decreasing in $k_2$, and
			\begin{equation}\label{a1K20}
				\lim\limits_{ k_2\to (\widehat{k}_2^*)^-}\alpha_1( k_1, k_2)=0.
			\end{equation}
			
			(2). In case of $ k_2\in[\widehat{k}_2^*, \infty)$, we have $\lim\limits_{\alpha\to 0^+}\widehat F_1(1,\alpha,k_1, k_2)\leq0$. Thus, for any $\al\in(0,+\infty)$, $\widehat F_1(1,\alpha,k_1, k_2)<0$. Then, by \eqref{alpha_1} and \eqref{eta1}, we have $1>\eta_1^*(\alpha)$ or equivalently $\alpha>\la_{I,1}^*(\alpha)$. 
		\end{proof}
		
		\begin{figure}
			\centering  	
			\subfloat[]{
				\begin{tikzpicture}[font=\footnotesize]
					\draw[->] (0,0) --(3.2,0) node[right] {$\alpha$};
					\draw[->] (0,0) --(0,3);
					\draw  (0,0) node[left] {$O$};
					\draw[blue,thick] plot[domain=0:3](\x,{\x*(3-\x)/1.5});
					\draw [red,thick] (0,0.5)  parabola (1.6,2.6);
					\draw [densely dashed](0,0)--(3,3);
					\draw [densely dashed](3,0)--(3,3);
					\draw [densely dashed](1.5,1.5)--(1.5,0) node[below]{$\alpha_2$};
					\draw [densely dashed](1.5,1.5)--(0,1.5) node[left]{$\alpha_2$};
					\draw (3,0) node[below]{$\alpha_0$};
					\draw (0.8,2.4) node{$\la_{I,1}^*(\alpha)$};
					\draw (2.2,0.4) node{$\la_{I,2}^*(\alpha)$};
				\end{tikzpicture}
			}\hspace{10pt}	
			\subfloat[]{
				\begin{tikzpicture}[font=\footnotesize]
					\draw[->] (0,0) --(3.2,0) node[right] {$\alpha$};
					\draw[->] (0,0) --(0,3);
					\draw  (0,0) node[left] {$O$};
					\draw[blue,thick] plot[domain=0:3](\x,{\x*(3-\x)/1.5});
					\draw [red,thick] (3,1.6)  parabola (0,0.4);
					\draw [densely dashed](0,0)--(3,3);
					\draw [densely dashed](3,3)--(3,0)node [below]{$\alpha_0$};
					\draw [densely dashed](1.15,1.15)--(1.15,0);
					\draw [densely dashed](1.15,1.15)--(0,1.15) node[left]{$\alpha_1$};
					\draw [densely dashed](1.5,1.5)--(1.5,0) node[below]{$\alpha_2$};
					\draw [densely dashed](1.5,1.5)--(0,1.5) node[left]{$\alpha_2$};
					\draw (2.2,0.4) node{$\la_{I,2}^*(\alpha)$};
					\draw (3,1.6) node [right] {$\la_{I,1}^*(\alpha)$};
					\draw (1.05,0)node[below]{$\alpha_1$};
					\label{4(b)}
				\end{tikzpicture}
			}	\hspace{10pt}	
			\subfloat[]{
				\begin{tikzpicture}[font=\footnotesize]
					\draw[->] (0,0) --(3.2,0) node[right] {$\alpha$};
					\draw[->] (0,0) --(0,3);
					\draw  (0,0) node[left] {$O$};
					\draw[blue,thick] plot[domain=0:3](\x,{\x*(3-\x)/1.5});
					\draw [red,thick] (3,1.3)  parabola (0,0);
					\draw [densely dashed](0,0)--(3,3);
					\draw [densely dashed](3,3)--(3,0);
					\draw (3,0) node [below]{$\alpha_0$};
					\draw [densely dashed](1.5,1.5)--(1.5,0) node[below]{$\alpha_2$};
					\draw [densely dashed](1.5,1.5)--(0,1.5) node[left]{$\alpha_2$};
					\draw (3,0) node[below]{$\alpha_0$};
					\draw (2.2,0.4) node{$\la_{I,2}^*(\alpha)$};
					\draw (3,1.3) node [right] {$\la_{I,1}^*(\alpha)$};
					\label{4c}
				\end{tikzpicture}
			}			
			\caption{The rough figures of $\la_{I,1}^*(\alpha)$ (red) and $\la_{I,2}^*(\alpha)$ (blue). (a). $k_1\ge 2\rho_0^*/3$; (b). $k_1\in(0,\rho_0^*/2)\cup(\rho_0^*/2,2\rho_0^*/3)$ and $k_2<\widehat{k}_2^*$; (c). $k_1\in(0,\rho_0^*/2)$, $ k_2\in[\widehat{k}_2^*,\xi( k_1))$, or $ k_1\in(\rho_0^*/2,2\rho_0^*/3)$, $ k_2\in[ \widehat{k}_2^*,\infty)$.}
			\label{Fig. alpha_2}
		\end{figure}
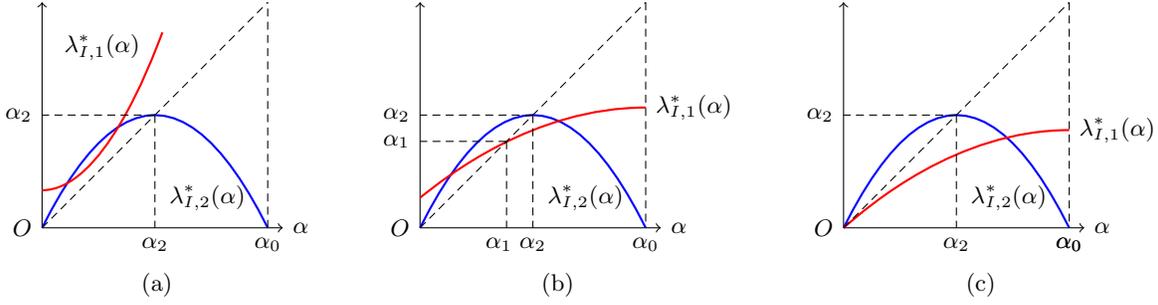
		
		We have the following lemma on the properties of $\la_{I,2}^*(\alpha)$ (see Fig. \ref{Fig. alpha_2}).
		\begin{lemma}\label{tcg-2} 
			Suppose that $\tau\in(\tau^*,+\infty)$, $( k_1, k_2)\in \Gamma_2\cup\Gamma_{3-1}$, and $\al\in(0,\al_0)$, where $\al_0$ is defined in \eqref{alp0}. Let $\la_{I,2}^*(\alpha)$ be the solution of \eqref{SLEP-2: im-b}. Then,
			\begin{equation}\label{la0a0}
				\lim\limits_{\alpha\to0^+}\la_{I,2}^*(\alpha)=\lim\limits_{\alpha\to\alpha_0^-}\la_{I,2}^*(\alpha)=0,
			\end{equation}
			and there exists a unique $\alpha_2\in(0,\alpha_0)$, such that $\la_{I,2}^*(\alpha)$ is increasing in $(0,\al_2)$, while decreasing in $(\al_2,\al_0)$, and at $\al=\al_2$, $\la_{I,2}^*(\alpha)$ attains its local maximal value. Moreover,
			\begin{equation}\label{henan-1}
				\la_{I,2}^*(\alpha)
				\begin{cases}
					\begin{split}
						>\alpha,~&\mbox{if}~\alpha\in(0,\alpha_2),\\
						=\alpha,~&\mbox{if}~\alpha=\alpha_2,\\
						<\alpha,~&\mbox{if}~\alpha\in(\alpha_2,\alpha_0).
					\end{split}
				\end{cases}
			\end{equation}
		\end{lemma}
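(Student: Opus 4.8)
The plan is to work throughout with the single scalar $s:=(\la_I^*)^2$ and to study the equation \eqref{SLEP-2: im-b}, rewritten as the balance
\begin{equation*}
	Y(0,s,k_2)=Q(s)=\tau-\frac{k_1\al}{\al^2+s},\qquad s>0,\ \al\in(0,\al_0).
\end{equation*}
First I would record that $Y(0,0,k_2)<\tau$ (since $\tau>\tau^*=Y(0,0,0)>Y(0,0,k_2)$ by Proposition \ref{prop.AB}), and that by Propositions \ref{prop.AB} and \ref{prop.PQ} the map $s\mapsto Y(0,s,k_2)$ is strictly decreasing from $Y(0,0,k_2)$ to $0$, while $Q$ is strictly increasing from $Q(0)=\tau-k_1/\al$ to $\tau>0$. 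The definition \eqref{alp0} of $\al_0$ is exactly equivalent to the statement that $Q(0)<Y(0,0,k_2)$ precisely when $\al<\al_0$; hence for $\al\in(0,\al_0)$ the two strictly monotone graphs cross exactly once, which yields the unique positive root $s_2^*(\al)$, and $\la_{I,2}^*(\al)=\sqrt{s_2^*(\al)}$. Since $\partial_s\bigl(Y(0,s,k_2)-Q(s)\bigr)<0$, the implicit function theorem makes $s_2^*$, and therefore $\la_{I,2}^*$, a $C^1$ function of $\al$ on $(0,\al_0)$.

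For the boundary behaviour \eqref{la0a0}, the key is an a priori bound read off directly from the balance: writing it as $\tau-Y(0,s_2^*(\al),k_2)=\tfrac{k_1\al}{\al^2+s_2^*(\al)}$ and using $Y(0,s_2^*(\al),k_2)\le Y(0,0,k_2)$ together with $\tau-Y(0,0,k_2)=k_1/\al_0$ gives $\tfrac{k_1\al}{\al^2+s_2^*(\al)}\ge k_1/\al_0$, i.e. $0<s_2^*(\al)\le\al(\al_0-\al)$. Letting $\al\to0^+$ or $\al\to\al_0^-$ sends the right-hand side to $0$, so $\la_{I,2}^*(\al)\to0$ at both endpoints.

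To obtain the comparison \eqref{henan-1} I would copy the device used in Lemma \ref{le.la1}: set $\widehat F_2(\eta,\al,k_1,k_2):=Y(0,\eta\al^2,k_2)-\tau+\frac{k_1}{\al(1+\eta)}$ and $\eta_2^*(\al):=\bigl(\la_{I,2}^*(\al)/\al\bigr)^2$, so that $\widehat F_2(\eta_2^*(\al),\al,k_1,k_2)=0$. Since $\partial_\eta\widehat F_2=\al^2\,\partial_{(\la_I^*)^2}Y(0,\eta\al^2,k_2)-\frac{k_1}{\al(1+\eta)^2}<0$ by Proposition \ref{prop.AB}, the sign of $\widehat F_2(1,\al,k_1,k_2)=Y(0,\al^2,k_2)-\tau+\frac{k_1}{2\al}$ decides whether $\eta_2^*(\al)\gtrless1$, equivalently whether $\la_{I,2}^*(\al)\gtrless\al$. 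One then checks that $\al\mapsto\widehat F_2(1,\al,k_1,k_2)$ is strictly decreasing on $(0,\al_0)$ (its derivative $2\al\,\partial_{(\la_I^*)^2}Y-\frac{k_1}{2\al^2}$ is a sum of two negative terms), that it tends to $+\infty$ as $\al\to0^+$, and that at $\al=\al_0$ it equals $Y(0,\al_0^2,k_2)-\tfrac12\bigl(\tau+Y(0,0,k_2)\bigr)<0$ (using $k_1/\al_0=\tau-Y(0,0,k_2)$ and $Y(0,\al_0^2,k_2)<Y(0,0,k_2)<\tau$). Hence there is a unique $\al_2\in(0,\al_0)$ with $\widehat F_2(1,\al_2,k_1,k_2)=0$, positive to its left and negative to its right, which translates precisely into \eqref{henan-1}.

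Finally, for the unimodality of $\la_{I,2}^*$ I would differentiate the balance in $\al$ and solve for the derivative of $s_2^*$, obtaining
\begin{equation*}
	\frac{d}{d\al}s_2^*(\al)=\frac{k_1\bigl(\al^2-s_2^*(\al)\bigr)}{\partial_sY\cdot\bigl(\al^2+s_2^*(\al)\bigr)^2-k_1\al},
\end{equation*}
where $\partial_sY<0$ forces the denominator to be strictly negative, so $\operatorname{sign}\frac{d}{d\al}s_2^*(\al)=\operatorname{sign}\bigl(s_2^*(\al)-\al^2\bigr)=\operatorname{sign}\bigl(\la_{I,2}^*(\al)-\al\bigr)$. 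By \eqref{henan-1} this is positive on $(0,\al_2)$, zero at $\al_2$, negative on $(\al_2,\al_0)$; since $\la_{I,2}^*=\sqrt{s_2^*}$ is an increasing function of $s_2^*>0$, we conclude that $\la_{I,2}^*$ increases on $(0,\al_2)$, attains a local maximum at $\al_2$, and decreases on $(\al_2,\al_0)$. The step I expect to require the most care is the bookkeeping that identifies the zero $\al_2$ of $\widehat F_2(1,\cdot)$ with the turning point of $\la_{I,2}^*$; the derivative formula above is exactly what makes this identification automatic, so deriving and simplifying that formula is the crux of the argument.
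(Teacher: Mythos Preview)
Your proof is correct and follows essentially the same approach as the paper: the auxiliary function $\widehat F_2(\eta,\al,k_1,k_2)$, the analysis of $\widehat F_2(1,\al,k_1,k_2)$ to locate $\al_2$, and the derivative computation linking the sign of $\frac{d}{d\al}\la_{I,2}^*$ to the sign of $\la_{I,2}^*(\al)-\al$ all mirror the paper's argument. Your treatment of the boundary limits \eqref{la0a0} via the explicit a priori bound $s_2^*(\al)\le\al(\al_0-\al)$ is a slight improvement over the paper, which argues $\la_{I,2}^*(\al)\to0$ as $\al\to0^+$ by contradiction; your bound gives both endpoints at once and is more informative.
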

		\begin{proof}
			1. By \eqref{alp0}, one can directly check that $\lim\limits_{\alpha\to\alpha_0^-}\la_{I,2}^*(\alpha)=0$. 
			
			We then show that $\la_{I,2}^*(\alpha)\to 0$ as $\al\to0^+$. Suppose not. Then, $\la_{I,2}^*(\alpha)\to c$ for some $c\in(0,\infty]$. 
			By \eqref{SLEP-2: im-b}, we have 
			\begin{equation}\label{SLEP-3: im-b}
				F_2((\la_{I,2}^*(\al))^2,\alpha,k_1, k_2):= Y(0,(\la_{I,2}^*(\al))^2,k_2)-\tau+\dfrac{k_1\alpha}{\alpha^2+(\la_{I,2}^*(\al))^2}=0.	
			\end{equation}
			
			Then, from \eqref{SLEP-3: im-b} and $\la_{I,2}^*(\alpha)\to c$, we have $Y(0,c,k_2)-\tau=0$. However, this is impossible since $\tau>\tau^*=Y(0,0,0)>Y(0,c,k_2)$.

			2. For $\al\in(0,\al_0)$, we define
			\begin{equation}\label{eta2(al)}
				\eta_2^*(\al)=\bigg(\dfrac{\la_{I,2}^*(\al)}{\al}\bigg)^2.
			\end{equation}
			
			For $( k_1, k_2)\in \Gamma_2\cup\Gamma_{3-1}$ and $\alpha\in(0,\alpha_0)$, we define
			\begin{equation}\label{sinx-a}
				\widehat F_2(\eta,\alpha,k_1, k_2):=Y(0,\eta\al^2,k_2)-\tau+\dfrac{k_1}{\alpha+\eta\al},
			\end{equation}
			where $\eta>0$.
			
			Then, by \eqref{SLEP-3: im-b}, we have
			\begin{equation}\label{sinx-b}
				\widehat F_2(\eta_2^*(\al),\alpha,k_1, k_2)=0.	
			\end{equation}
			
			By Proposition \ref{prop.AB}, we have
			\begin{equation}\label{wuyi-1}
				\dfrac{\pa \widehat F_2}{\pa \eta}=\alpha^2\dfrac{\pa Y}{\pa(\la_I^*)^2}(0,\eta\alpha^2,k_2)-\dfrac{ k_1}{(1+\eta)^2\alpha}<0.
			\end{equation}
			
			Then, by \eqref{sinx-b} and \eqref{wuyi-1}, we have
			\begin{equation}\label{the0}
				\widehat F_2(\eta,\alpha,k_1, k_2)
				\begin{cases}
					\begin{split}
						>0,~&\mbox{if}~\eta<\eta_{2}^*(\al),\\
						=0,~&\mbox{if}~\eta=\eta_{2}^*(\al),\\
						<0,~&\mbox{if}~\eta>\eta_{2}^*(\al).
					\end{split}
				\end{cases}
			\end{equation}
			
			On the other hand, we can check that
			\begin{equation}\label{F_2}
				\widehat F_2(1,\alpha,k_1, k_2)=Y(0,\alpha^2,k_2)+\dfrac{k_1}{2\alpha}-\tau.
			\end{equation}
			
			By Proposition \ref{prop.AB}, $\widehat F_2(1,\alpha,k_1, k_2)$ is decreasing in $\alpha$. Then, by
			\begin{equation*}
				\begin{split}
					\lim\limits_{\alpha\to0^+}\widehat F_2(1,\alpha,k_1, k_2)=&+\infty,\\
					\lim\limits_{\alpha\to\alpha_0^-}\widehat F_2(1,\alpha,k_1, k_2)=&Y(0,\alpha_0^2, k_2)-\frac{Y(0,0, k_2)}{2}-\tau<\frac{Y(0,0, k_2)-\tau}{2}<0,
				\end{split}
			\end{equation*}
			there exists a unique $\alpha_2=\alpha_2( k_1, k_2)\in(0,\alpha_0)$ satisfying
			\begin{equation}\label{wuyi-3}
				\widehat F_2(1,\alpha,k_1, k_2)
				\begin{cases}
					\begin{split}
						>0,~&\mbox{if}~\alpha<\alpha_2,\\
						=0,~&\mbox{if}~\alpha=\alpha_2,\\
						<0,~&\mbox{if}~\alpha>\alpha_2.
					\end{split}
				\end{cases}
			\end{equation}
			
			Therefore, combining \eqref{wuyi-3} with \eqref{the0} and \eqref{eta2(al)}, we can obtain \eqref{henan-1}.
			
			Moreover, by differentiating $\widehat F_2(1,\alpha_2,k_1, k_2)=0$ with respect to $ k_2$ (resp., $k_1$), we can conclude that $\alpha_2=\alpha_2( k_1, k_2)$ is decreasing (resp., increasing) in $ k_2$ (resp., $ k_1$). In particular, it follows from \eqref{F_2} that 
			\begin{equation}\label{alpha_2-r}
				\lim\limits_{k_2\to+\infty}\alpha_2( k_1, k_2)=\dfrac{k_1}{2\tau}.
			\end{equation}
			
			Finally, we show the monotone properties of $\la_{I,2}^*(\alpha)$ with respect to $\alpha$. Differentiating \eqref{SLEP-3: im-b} with respect to $\alpha$ yields
			\begin{equation*}
				\dfrac{d\la_{I,2}^*(\alpha)}{d\alpha}=\dfrac{ k_1(\alpha+\la_{I,2}^*(\alpha))(\alpha-\la_{I,2}^*(\alpha))}{2\la_{I,2}^*(\alpha)\left[\alpha^2+(\la_{I,2}^*(\alpha))^2\right]^2 \frac{\pa Y(0,(\la_I^*)^2,k_2)}{\pa (\la_I^*)^2}}
				\begin{cases}
					\begin{split}
						>0,~&\mbox{if}~\alpha<\alpha_2,\\
						=0,~&\mbox{if}~\alpha=\alpha_2,\\
						<0,~&\mbox{if}~\alpha>\alpha_2,\\
					\end{split}
				\end{cases}
			\end{equation*}
			where $\frac{\pa Y(0,(\la_I^*)^2,k_2)}{\pa (\la_I^*)^2}<0$ is used.
		\end{proof}

		We have the following two lemmas.
		
		\begin{lemma}\label{le.K2.sma}
			Suppose that $\tau\in(\tau^*,+\infty)$ and 
			\begin{equation}\label{H_1}
				k_1\in(0,\rho_0^*/2)\cup(\rho_0^*/2,2\rho_0^*/3),\;\;k_2\in(k_2^{*}-\kappa_1,\widehat{k}_2^*),
			\end{equation}
			where $\kappa_1>0$ is sufficiently small. Then, there exist $2m$ $($counting multiplicity, $m\geq1$ is an integer$)$ points $\al_{j}^*\in(0,\al_0)$, $1\leq j\leq 2m$, satisfying
			\begin{equation*}
				0<\al_1^*\leq\al_2^*\leq\cdots\leq\al_{2m-1}^*<\al_2<\al_{2m}^*<\al_0,
			\end{equation*}
			such that at $\al=\al_{j}^*$, $1\leq j\leq 2m$, $\la_{I,1}^*(\alpha)=\la_{I,2}^*(\alpha)$. In particular, $0<\la_{I,1}^*(\al_{2m}^*)=\la_{I,2}^*(\al_{2m}^*)<\al_{2m}^*$.
		\end{lemma}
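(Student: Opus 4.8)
The plan is to recast the lemma as a zero‑counting statement for the single function
\[
g(\al):=\la_{I,1}^*(\al)-\la_{I,2}^*(\al),\qquad \al\in(0,\al_0),
\]
and to locate its zeros relative to $\al_2$ by exploiting the monotonicity already recorded in Lemmas~\ref{le.la1} and \ref{tcg-2}. Since $\la_{I,1}^*$ and $\la_{I,2}^*$ are defined implicitly by \eqref{SLEP-2: im-a} and \eqref{SLEP-2: im-b}, whose left‑hand sides are real‑analytic and whose $(\la_I^*)^2$‑derivatives are nonzero (Propositions~\ref{prop.AB}, \ref{prop.PQ}), the function $g$ is real‑analytic on $(0,\al_0)$, so it has only isolated zeros of finite multiplicity in every compact subinterval; the $\al_j^*$ in the statement are exactly these zeros counted with multiplicity. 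First I would fix the sign of $g$ at the two ends of $(0,\al_0)$. As $\al\to\al_0^-$, Lemma~\ref{tcg-2} gives $\la_{I,2}^*(\al)\to0$, while $\la_{I,1}^*$ is increasing and bounded away from $0$, so $g>0$ there. As $\al\to0^+$ one again has $\la_{I,2}^*(\al)\to0$, and the role of hypothesis \eqref{H_1} is to force $\la_{I,1}^*(0^+)>0$ (passing to the limit $\al\to0^+$ in \eqref{SLEP-2: im-a}, using the part of \eqref{H_1} that controls $X(0,0,k_2)$ relative to $\rho_0^*-k_1$); hence $g>0$ near $0^+$ as well. Thus $g$ has no zeros accumulating at either endpoint, it is positive at both ends, and therefore the total multiplicity of its zeros in $(0,\al_0)$ is even, say $2m$ with $m\ge0$.

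The second step is to pin down exactly one zero in $(\al_2,\al_0)$. On $(\al_2,\al_0)$, $\la_{I,1}^*$ is strictly increasing (Lemma~\ref{le.la1}) and $\la_{I,2}^*$ is strictly decreasing (Lemma~\ref{tcg-2}), so $g$ is strictly increasing there and has at most one zero, necessarily simple. To produce one, I note that under \eqref{H_1} one has $\al_1<\al_2$ — indeed \eqref{a1K20} makes $\al_1$ small, while $\al_2>k_1/(2\tau)$ by \eqref{alpha_2-r} together with the monotonicity of $\al_2$ in $k_2$ — so by \eqref{la_1^*(alpha)} and \eqref{henan-1},
\[
g(\al_2)=\la_{I,1}^*(\al_2)-\la_{I,2}^*(\al_2)=\la_{I,1}^*(\al_2)-\al_2<0 ,
\]
whereas $g(\al_0^-)>0$; by the intermediate value theorem and strict monotonicity there is a unique simple zero $\al_{2m}^*\in(\al_2,\al_0)$. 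At $\al_{2m}^*$ one has $\la_{I,1}^*(\al_{2m}^*)=\la_{I,2}^*(\al_{2m}^*)$, which is positive since $\la_{I,2}^*>0$ on $(0,\al_0)$, and is strictly less than $\al_{2m}^*$ since $\al_{2m}^*>\al_2$ forces $\la_{I,2}^*(\al_{2m}^*)<\al_{2m}^*$ via \eqref{henan-1}; this is precisely the ``in particular'' assertion.

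The third step collects the remaining $2m-1$ zeros. Because $g$ is strictly increasing, hence zero‑free on $(\al_2,\al_0)$ apart from $\al_{2m}^*$, and $g(\al_2)\neq0$, every zero other than $\al_{2m}^*$ lies in $(0,\al_2)$. Moreover $g(\al_1)=\al_1-\la_{I,2}^*(\al_1)<0$ (again using $\al_1<\al_2$ and \eqref{henan-1}) while $g>0$ near $0^+$, so $g$ has at least one zero in $(0,\al_1)\subset(0,\al_2)$; in particular $m\ge1$. Listing the $2m-1$ zeros in $(0,\al_2)$ with multiplicity in increasing order as $\al_1^*\le\cdots\le\al_{2m-1}^*$ and then adjoining $\al_{2m}^*$ gives the chain $0<\al_1^*\le\cdots\le\al_{2m-1}^*<\al_2<\al_{2m}^*<\al_0$, which completes the proof.

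The main obstacle is the behaviour of $g$ as $\al\to0^+$: there $\la_{I,2}^*(\al)$ vanishes (like $\al^{1/2}$, as one reads off from \eqref{SLEP-2: im-b}) and $\la_{I,1}^*(\al)$ may vanish as well, so the sign of $g$ near $0$ hinges on whether $\la_{I,1}^*(0^+)$ is positive or $0$, equivalently on the vanishing rate of $\la_{I,1}^*$ extracted from \eqref{SLEP-2: im-a}; this is exactly the delicate comparison that the hypothesis \eqref{H_1} on $(k_1,k_2)$ is designed to resolve, and it also underlies the ordering $\al_1<\al_2$ through \eqref{a1K20}. Once this sign and that ordering are secured, the parity argument of the first step and the monotonicity of $g$ on $(\al_2,\al_0)$ of the second step yield the rest essentially automatically.
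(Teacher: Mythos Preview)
Your overall strategy coincides with the paper's: both reduce to the difference $g(\al)=\la_{I,1}^*(\al)-\la_{I,2}^*(\al)$, use \eqref{a1K20} together with \eqref{alpha_2-r} to secure $\al_1<\al_2$ for $k_2$ slightly below $\widehat k_2^*$, deduce $g(\al_2)=\la_{I,1}^*(\al_2)-\al_2<0$ via \eqref{la_1^*(alpha)}, and then use the strict monotonicity of $g$ on $(\al_2,\al_0)$ to isolate the unique simple zero $\al_{2m}^*$ there with $\la_{I,1}^*(\al_{2m}^*)=\la_{I,2}^*(\al_{2m}^*)<\al_{2m}^*$. That part of your argument is correct and matches the paper.

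The gap is in your endpoint analysis at $\al\to0^+$, on which the parity claim ``total multiplicity $=2m$'' rests. You assert that \eqref{H_1} forces $\la_{I,1}^*(0^+)>0$ through a comparison of $X(0,0,k_2)$ with $\rho_0^*-k_1$. But sending $\al\to0^+$ in \eqref{SLEP-2: im-a} with $\la_I^*\to L>0$ yields $X(0,L^2,k_2)=\rho_0^*-k_1$, which requires $X(0,0,k_2)>\rho_0^*-k_1$. Under \eqref{H_1} one has $k_2$ near $\widehat k_2^*$, where by \eqref{2 k_2*} $X(0,0,\widehat k_2^*)=\rho_0^*-\tfrac{3k_1}{2}<\rho_0^*-k_1$; by continuity the same strict inequality persists on $(\widehat k_2^*-\kappa_1,\widehat k_2^*)$ for $\kappa_1$ small. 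Hence $\la_{I,1}^*(0^+)=0$. Rewriting \eqref{SLEP-2: im-a} in terms of $\eta=(\la_I^*/\al)^2$ shows in fact $\la_{I,1}^*(\al)\sim c\,\al$ with $c$ finite (and $c\to1$ as $k_2\to\widehat k_2^*$), whereas your own observation from \eqref{SLEP-2: im-b} gives $\la_{I,2}^*(\al)\sim\sqrt{\al_0\al}$. Thus $g(\al)<0$ for $\al$ small, so $g$ has \emph{opposite} signs at the two ends of $(0,\al_0)$ and the total multiplicity of its zeros is odd, not even; both your step~1 and the ``at least one zero in $(0,\al_1)$'' argument in step~3 fail.

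In fairness, the paper is no more rigorous here---it simply points to Fig.~\ref{Fig. alpha_2}(b), whose depiction of $\la_{I,1}^*(0^+)>0$ is likewise inconsistent with \eqref{H_1}. What is actually used downstream (Lemma~\ref{comei} and Theorem~\ref{th.3-1}) is only the largest zero $\al_H^*=\al_{2m}^*\in(\al_2,\al_0)$ and the inequality $\la_{I,H}^*<\al_H^*$, and that you have established correctly.
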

		\begin{proof}			
		    Since $\alpha_2( k_1, k_2)$ is decreasing in $k_2$, by \eqref{alpha_2-r} and \eqref{a1K20}, we have
			\begin{equation*}
				\alpha_2(k_1, \widehat{k}_2^*)>\lim\limits_{ k_2\to+\infty}\alpha_2( k_1, k_2)=\frac{k_1}{2\tau}>0=\alpha_1(k_1, \widehat{k}_2^*),
			\end{equation*}
			which indicates that $\alpha_2=\alpha_2( k_1, k_2)>\alpha_1=\alpha_1( k_1, k_2)$ for $k_2\in(\widehat{k}_2^*-\kappa_1,\widehat{k}_2^*)$ with some sufficiently small $\kappa_1>0$.
			
			Under \eqref{H_1}, we can obtain the existence of $2m$ $($counting multiplicity, $m\geq1$ is an integer$)$ points $\al_{j}^*\in(0,\al_0)$, $1\leq j\leq 2m$, such that at $\al=\al_{j}^*$, $1\leq j\leq 2m$, $\la_{I,1}^*(\alpha)=\la_{I,2}^*(\alpha)$. One can see Fig. \ref{4(b)}. Moreover, $\la_{I,1}^*(\alpha)-\la_{I,2}^*(\alpha)$ is increasing in $\alpha\in(\alpha_2,\alpha_0)$ and satisfies
			\begin{equation*}
				\la_{I,1}^*(\alpha_2)-\la_{I,2}^*(\alpha_2)<0,\;\;	\la_{I,1}^*(\alpha_0)-\la_{I,2}^*(\alpha_0)>0.
			\end{equation*}
			
			Then, there exists a unique $\al$ such that $\la_{I,1}^*(\alpha)-\la_{I,2}^*(\alpha)=0$ in $(\alpha_2,\alpha_0)$. This proves that $\al_{2m}^*\in(\alpha_2,\alpha_0)$ but $\al_{2m-1}^*\notin(\alpha_2,\alpha_0)$.
			In particular, $\la_{I,1}^*(\al_{2m}^*)<\al_{2m}^*$.
		\end{proof}
		
		\begin{lemma}\label{le.ex.K2.lar}
			Suppose that $\tau\in(\tau^*,+\infty)$ and that one of the following conditions holds true:
			\begin{align}
				&k_1\in\left(0,\rho_0^*/2\right),\; k_2\in[\widehat{k}_2^*,\xi( k_1)),\label{3.88a}\\
				&k_1\in\left(\rho_0^*/2,2\rho_0^*/3\right),\;  k_2\in[\widehat{k}_2^*,\infty),\label{3.88b}\\
				&k_1\in\left[2\rho_0^*/3,\rho_0^*\right),\;  k_2\;\text{is sufficiently large}.\label{3.88c}
			\end{align}
			Then, there exists a unique $\alpha^*\in(0,\al_0)$, such that at $\al=\al^*$,  the eigenvalue problem \eqref{EP-2} has a pair of purely imaginary eigenvalues $\pm i \la^*$. Moreover, 
			\begin{enumerate}
				\item if either \eqref{3.88a} or \eqref{3.88b} holds, then $\al_2<\la^*<\alpha^*<\al_0$. 
				\item if either \eqref{3.88b} or \eqref{3.88c} holds, then in the limit of $ k_2\rightarrow+\infty$, we have
				\begin{equation}\label{tends2}
					\alpha^*=\frac{\rho_0^*- k_1}{\tau},\;\;\la^*=\sqrt{\frac{(2 k_1-\rho_0^*)( \rho_0^*- k_1)}{ \tau^{2}}}. 
				\end{equation}
			\end{enumerate}
		\end{lemma}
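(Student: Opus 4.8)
The plan is to recast ``\eqref{EP-2} has a pair of purely imaginary eigenvalues $\pm i\la^*$'' as the assertion that the singular limit system \eqref{SLEP-2} has a nontrivial solution with $\la_R^*=0$ and $\la_I^*=\la^*\neq0$, and then to analyse this through the two scalar equations \eqref{SLEP-2: im-a}--\eqref{SLEP-2: im-b}. Under each of \eqref{3.88a}--\eqref{3.88c} one has $( k_1, k_2)\in\Gamma_2\cup\Gamma_{3-1}$, so (as noted just before Lemma~\ref{le.K2.sma}) \eqref{SLEP-2: im-a} has a unique positive root $\la_{I,1}^*(\al)$ for every $\al>0$ and \eqref{SLEP-2: im-b} has a unique positive root $\la_{I,2}^*(\al)$ for every $\al\in(0,\al_0)$. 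Hence $i\la^*$ is an eigenvalue at the parameter value $\al$ exactly when $\la_{I,1}^*(\al)=\la_{I,2}^*(\al)=:\la^*$, and the statement reduces to counting and locating the zeros of $\Phi(\al):=\la_{I,1}^*(\al)-\la_{I,2}^*(\al)$ on $(0,\al_0)$; the only tools needed are the monotonicity and endpoint data of Lemmas~\ref{le.la1} and \ref{tcg-2} and Proposition~\ref{prop.AB}.

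In cases \eqref{3.88a} and \eqref{3.88b} one has $ k_2\ge\widehat{ k}_2^*$, so by Lemma~\ref{le.la1}(2) the curve $\la_{I,1}^*$ is increasing with $\la_{I,1}^*(\al)<\al$ on $(0,+\infty)$, while by Lemma~\ref{tcg-2} the curve $\la_{I,2}^*$ is increasing on $(0,\al_2)$, decreasing on $(\al_2,\al_0)$, equals $\al_2$ at $\al_2$, and tends to $0$ as $\al\to\al_0^-$. Therefore $\la_{I,1}^*(\al)<\al\le\la_{I,2}^*(\al)$ on $(0,\al_2]$, so $\Phi<0$ there and there is no crossing on $(0,\al_2]$; on $(\al_2,\al_0)$ the difference $\Phi$ is strictly increasing, with $\Phi(\al_2)<0$ and $\Phi(\al_0^-)=\la_{I,1}^*(\al_0)>0$, hence it has exactly one zero $\al^*$, and $\al^*\in(\al_2,\al_0)$. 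Since $\al^*>\al_2$, Lemma~\ref{tcg-2} gives $\la^*=\la_{I,2}^*(\al^*)<\al^*$, and substituting $\al^*$ back into \eqref{SLEP-2: im-a}--\eqref{SLEP-2: im-b} and using the monotonicity of $X(0,\cdot, k_2)$ and $Y(0,\cdot, k_2)$ (Proposition~\ref{prop.AB}) fixes the position of $\la^*$ relative to $\al_2$, giving the ordering in item~(1).

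For \eqref{3.88c} — and, in the regime $ k_2\to+\infty$, also \eqref{3.88b} — the two curves need not be comparable on $(0,\al_2)$, so instead I would eliminate $\al$: solving \eqref{SLEP-2: im-a} for $\al$ in terms of $\la_I^*$ and inserting the result into \eqref{SLEP-2: im-b} collapses the pair into a single equation $H(\la_I^*)=0$ with
\begin{equation*}
	H(\la_I^*):=Y(0,(\la_I^*)^2, k_2)-\tau+\frac{1}{\la_I^*}\sqrt{\big(\rho_0^*- k_1-X\big)\big(2 k_1-\rho_0^*+X\big)},\qquad X:=X(0,(\la_I^*)^2, k_2),
\end{equation*}
where the positivity of the two radicands is precisely where the hypotheses $ k_1>\rho_0^*/2$ (so $2 k_1-\rho_0^*>0$) and ``$ k_2$ sufficiently large'' (so $X(0,0, k_2)<\rho_0^*- k_1$) are used. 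One has $H(\la_I^*)\to+\infty$ as $\la_I^*\to0^+$ and $H(\la_I^*)\to-\tau<0$ as $\la_I^*\to+\infty$, giving a root; since $X(0,(\la_I^*)^2, k_2)\le X(0,0, k_2)\to0$ as $ k_2\to\infty$, for $ k_2$ large the term $\la_I^{*-1}\sqrt{\cdots}$ is a small perturbation of the strictly decreasing $\la_I^{*-1}\sqrt{(\rho_0^*- k_1)(2 k_1-\rho_0^*)}$, which together with $\pa Y/\pa(\la_I^*)^2<0$ makes $H$ strictly decreasing, so the root $\la^*$ is unique, and $\al^*$ is recovered from \eqref{SLEP-2: im-a}. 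The limit \eqref{tends2} follows by letting $ k_2\to+\infty$ directly in \eqref{SLEP-2: im-a}--\eqref{SLEP-2: im-b}: by Proposition~\ref{prop.AB}, $X(0,\cdot, k_2)\to0$, $Y(0,\cdot, k_2)\to0$ and $\al_0\to k_1/\tau$, so the limiting system is $ k_1\big(1+\al^2/(\al^2+(\la_I^*)^2)\big)=\rho_0^*$ and $ k_1\al/(\al^2+(\la_I^*)^2)=\tau$, which solves to $\al^*=(\rho_0^*- k_1)/\tau$ and $\la^*=\sqrt{(2 k_1-\rho_0^*)(\rho_0^*- k_1)}/\tau$; continuity of $\al^*( k_2)$ and $\la^*( k_2)$ in $ k_2$ (from the implicit function theorem used for uniqueness) carries these to the genuine limit. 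The passage from the purely imaginary eigenvalue of \eqref{SLEP-2} to one of \eqref{EP-2} for small $\ep>0$ then proceeds as in the proofs of Theorems~\ref{th.2-1} and \ref{cor.ue}.

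The step I expect to be the main obstacle is the uniqueness of $\al^*$ in case \eqref{3.88c}: on $(0,\al_2)$ both $\la_{I,1}^*$ and $\la_{I,2}^*$ are increasing, so $\Phi$ can a priori vanish several times, and excluding this really requires the scalar reduction $H(\la_I^*)=0$ together with the largeness of $ k_2$; everything else is bookkeeping with the sign and monotonicity information already collected in Lemmas~\ref{le.la1}--\ref{tcg-2} and Proposition~\ref{prop.AB}.
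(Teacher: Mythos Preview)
For cases \eqref{3.88a} and \eqref{3.88b} your argument coincides with the paper's: both use $\la_{I,1}^*(\al)<\al<\la_{I,2}^*(\al)$ on $(0,\al_2]$ from Lemmas~\ref{le.la1}(2) and \ref{tcg-2}, then the opposite monotonicities on $(\al_2,\al_0)$ together with the endpoint behavior $\la_{I,2}^*(\al_0^-)=0<\la_{I,1}^*(\al_0^-)$ to get a unique crossing in $(\al_2,\al_0)$. The ordering in item~(1) is treated with the same brevity in both.

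Where you diverge is in case \eqref{3.88c} and the derivation of \eqref{tends2}. The paper does not introduce a scalar reduction $H(\la_I^*)$; instead it passes to the limit $k_2\to+\infty$ directly in the two curves, obtaining the explicit forms
\[
\lim_{k_2\to\infty}\la_{I,1}^*(\al)=\al\sqrt{\tfrac{2k_1-\rho_0^*}{\rho_0^*-k_1}},\qquad \lim_{k_2\to\infty}\la_{I,2}^*(\al)=\sqrt{\tfrac{k_1\al}{\tau}-\al^2},
\]
equates them, and solves for \eqref{tends2}; existence and uniqueness for finite large $k_2$ are then implicit by perturbation from this limit. Your route---eliminating $\al$ from \eqref{SLEP-2: im-a}--\eqref{SLEP-2: im-b} to get a single equation $H(\la_I^*)=0$ and arguing that $H$ is strictly decreasing for large $k_2$---is a genuine alternative. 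It has the advantage of giving a more direct handle on uniqueness at finite $k_2$ (your identified ``main obstacle'' is exactly what the reduction addresses), at the cost of needing to justify that the $X$-dependent radical is a small-in-$k_2$ perturbation that does not spoil monotonicity. The paper's approach is shorter but leans more heavily on an unstated perturbation step. Both arrive at the same limiting formula, and your final remark about lifting from \eqref{SLEP-2} to \eqref{EP-2} is consistent with how the paper defers that step to Theorem~\ref{th.3-1}.
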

		
		\begin{proof}
			Suppose that \eqref{3.88a} holds. Then, by Lemma \ref{tcg-2}, for any $\al\in(0, \al_2)$, we have $\la_{I,2}^*(\alpha)>\al$. By Lemma \ref{le.la1}, for any $\al\in(0, \al_2)$, we have $\la_{I,1}^*(\alpha)<\al$. Thus, in $(0,\al_2)$, $\la_{I,1}^*(\alpha)<\al<\la_{I,2}^*(\alpha)$. By Lemma \ref{le.la1}, $\la_{I,1}^*(\alpha)$ is increasing in $(\al_2,\al_0)$ and $\lim\limits_{\al\rightarrow\al_0^-}\la_{I,1}^*(\alpha)>0$. By Lemma \ref{tcg-2}, $\la_{I,2}^*(\alpha)$ is decreasing in $(\al_2,\al_0)$ and $\lim\limits_{\al\rightarrow\al_0^-}\la_{I,2}^*(\alpha)=0$. Moreover, $\lim\limits_{\al\rightarrow\al_2^+}\la_{I,1}^*(\alpha)<\lim\limits_{\al\rightarrow\al_2^+}\la_{I,2}^*(\alpha)$. Then, in $(\al_2,\al_0)$, there exists a unique $\al^*$, such that at $\al=\al^*$, we have $\la_{I,1}^*(\al^*)=\la_{I,2}^*(\al^*)$. 
			
			Similarly, we can show the existence and uniqueness of $\al^*$ in the case of \eqref{3.88b}. And it is clear to show that under either \eqref{3.88a} or \eqref{3.88b}, it holds true that $\al_2<\la_{I,1}^*(\al^*)=\la_{I,2}^*(\al^*)<\alpha^*<\al_0$. 
			
			Suppose that either \eqref{3.88b} or \eqref{3.88c} holds.  Then, 
			\begin{equation*}
				\lim\limits_{ k_2\to\infty}\la_{I,1}^*(\alpha)=\alpha\sqrt{\dfrac{2 k_1-\rho_0^*}{\rho_0^*-k_1}},\;\lim\limits_{ k_2\to\infty}\la_{I,2}^*(\alpha)=\sqrt{\dfrac{k_1\alpha}{\tau}-\alpha^2}.
			\end{equation*}
			
			Solving $\lim\limits_{ k_2\to\infty}\la_{I,1}^*(\alpha)=\lim\limits_{ k_2\to\infty}\la_{I,2}^*(\alpha)$, we can obtain \eqref{tends2}. This completes the proof.
		\end{proof}
		
		Define
		\begin{equation}\label{lzxx-1}
			\alpha_H^*:=
			\begin{cases}
				\al_{2m}^*,\;&\text{in case of Lemma \ref{le.K2.sma}},\\
				\al^*,\;&\text{in case of Lemma \ref{le.ex.K2.lar}},
			\end{cases}\;\la_{I,H}^*=\la_{I,1}^*(\alpha_H^*)=\la_{I,2}^*(\alpha_H^*)>0.
		\end{equation}
		
		We have the following results.
		\begin{lemma}\label{comei}
			Let $\al_H^*$ and $\la_H^*$ be defined in \eqref{lzxx-1}. Then, there exists a sufficiently small $\kappa_2>0$, such that for any $\al\in(\al_H^*-\kappa_2, \al_H^*+\kappa_2)$, the eigenvalue problem \eqref{EP-2} has the simple complex conjugate eigenvalues $\la^*(\alpha):=\la_R^*(\alpha)\pm i\la_I^*(\alpha)$ such that $\la_R^*(\alpha_H^*)=0$, $\la_I^*(\alpha_H^*)=\la_{I,H}^*$. Moreover, if in addition $d>d^*$ for some $d^*\geq d_0$, where $d_0$ is defined in Theorem \ref{th.1-0}, then $\frac{d}{d\al}\la_R^*(\alpha_H^*)<0$.
		\end{lemma}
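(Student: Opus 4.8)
The plan is to run the whole argument on the complex singular limit eigenvalue problem attached to \eqref{EP-2}, exactly as \eqref{mz^*-1}--\eqref{comFP-3} was handled for the non-delayed case in \S3. Solving \eqref{mz^*-2} for $z^*$ by means of $(\widehat T_{\la_R^*})^{-1}$, pairing with $\delta(x-x^*)$ and expanding in $\{(\gamma_n,\psi_n)\}_{n\ge0}$ as in the derivation of \eqref{mz^*-2-c}, one finds that $\la^*$ is an eigenvalue of the singular limit of \eqref{EP-2} if and only if
\begin{equation*}
	\mathcal G^*(\la^*,\al):=\rho_0^*-k_1\Bigl(1+\tfrac{\al}{\al+\la^*}\Bigr)-\tau\la^*-\sum_{n\ge0}\frac{c_1^*c_2^*\psi_n^2(x^*)}{\gamma_n+2k_2+\la^*}=0,
\end{equation*}
with $c_1^*,c_2^*$ as in \eqref{c1c2} and $\rho_0^*$ as in \eqref{App1}; indeed, writing $\la^*=\la_R^*+i\la_I^*$, the real and imaginary parts of $\mathcal G^*=0$ are equivalent to the two equations of \eqref{SLEP-2}, and along $\la_R^*=0$ they reduce to \eqref{SLEP-2: im-a}--\eqref{SLEP-2: im-b}. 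Hence, by the definition \eqref{lzxx-1} of $(\al_H^*,\la_{I,H}^*)$ — where $\la_{I,1}^*(\al_H^*)=\la_{I,2}^*(\al_H^*)=\la_{I,H}^*>0$ solves \eqref{SLEP-2: im-a} and \eqref{SLEP-2: im-b} simultaneously (Lemmas \ref{le.K2.sma}, \ref{le.ex.K2.lar}) — we have $\mathcal G^*(i\la_{I,H}^*,\al_H^*)=0$.

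Next I would establish existence and simplicity of the crossing pair via the implicit function theorem. A direct computation gives $\partial_{\la^*}\mathcal G^*=\tfrac{k_1\al}{(\al+\la^*)^2}-\tau+\sum_{n\ge0}\tfrac{c_1^*c_2^*\psi_n^2(x^*)}{(\gamma_n+2k_2+\la^*)^2}$, whose imaginary part at $\la^*=i\la_{I,H}^*$ equals $-2\la_{I,H}^*\bigl[\tfrac{k_1\al_H^{*2}}{(\al_H^{*2}+\la_{I,H}^{*2})^2}+\sum_{n\ge0}\tfrac{c_1^*c_2^*(\gamma_n+2k_2)\psi_n^2(x^*)}{((\gamma_n+2k_2)^2+\la_{I,H}^{*2})^2}\bigr]<0$; in particular $\partial_{\la^*}\mathcal G^*\neq0$ there. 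The implicit function theorem then produces a $C^1$ branch $\la^*=\la^*(\al)$ of roots of $\mathcal G^*$ on $(\al_H^*-\kappa_2,\al_H^*+\kappa_2)$ with $\la^*(\al_H^*)=i\la_{I,H}^*$, and since $\mathcal G^*$ has real coefficients and $\al$ is real, $\overline{\la^*(\al)}$ is also a root, giving the conjugate pair $\la_R^*(\al)\pm i\la_I^*(\al)$ with $\la_R^*(\al_H^*)=0$, $\la_I^*(\al_H^*)=\la_{I,H}^*$. Simplicity of these eigenvalues — for the singular limit problem, and for \eqref{EP-2} with $0<\ep<\ep_0$ after the usual lift through the $\ep$-dependent characteristic function whose limit as $\ep\to0^+$ is $\mathcal G^*$ — follows exactly as in the proofs of Theorems \ref{th.1-1} and \ref{th.2-1} and in \cite{NM,TMN}, again using $\partial_{\la^*}\mathcal G^*\neq0$.

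The heart of the matter is the transversality sign. Differentiating $\mathcal G^*(\la^*(\al),\al)=0$ gives $\frac{d\la^*}{d\al}=-\partial_\al\mathcal G^*/\partial_{\la^*}\mathcal G^*$ with $\partial_\al\mathcal G^*=-k_1\la^*/(\al+\la^*)^2$, so at $(\la^*,\al)=(i\la_{I,H}^*,\al_H^*)$ the sign of $\tfrac{d}{d\al}\la_R^*(\al_H^*)$ is opposite to the sign of $\text{Re}\bigl(\partial_\al\mathcal G^*\cdot\overline{\partial_{\la^*}\mathcal G^*}\bigr)$. Carrying out this product, the $\tfrac{k_1\al}{(\al+\la^*)^2}$ cross term is purely imaginary, and after eliminating $\tau$ through \eqref{SLEP-2: im-b} at $\al_H^*$ (so that $\tau=Y(0,\la_{I,H}^{*2},k_2)+k_1\al_H^*/(\al_H^{*2}+\la_{I,H}^{*2})$) one gets
\begin{equation*}
	\text{Re}\bigl(\partial_\al\mathcal G^*\cdot\overline{\partial_{\la^*}\mathcal G^*}\bigr)=\frac{2k_1\la_{I,H}^{*2}}{(\al_H^{*2}+\la_{I,H}^{*2})^2}\,\Phi,\quad
	\Phi:=\frac{k_1\al_H^{*2}}{\al_H^{*2}+\la_{I,H}^{*2}}+c_1^*c_2^*\!\sum_{n\ge0}\!\frac{(\gamma_n+2k_2)(\al_H^{*2}-\la_{I,H}^{*2})+2\al_H^*\la_{I,H}^{*2}}{\bigl((\gamma_n+2k_2)^2+\la_{I,H}^{*2}\bigr)^2}\,\psi_n^2(x^*),
\end{equation*}
so everything comes down to proving $\Phi>0$. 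When $\la_{I,H}^*\le\al_H^*$ — which holds in Lemma \ref{le.K2.sma} and in part (1) of Lemma \ref{le.ex.K2.lar} — every summand and the leading term are positive and $\Phi>0$ is immediate. In the remaining regime (part (2) of Lemma \ref{le.ex.K2.lar}) the summands may change sign, and this is precisely where $d>d^*$ is used: for $d$ large the eigenvalues $\gamma_n$ ($n\ge1$) are so large, and $\psi_0^2(x^*)\to1/\ell$, that the series in $\Phi$ collapses onto its $n=0$ contribution, which together with the leading term is matched against the explicit $k_2\to+\infty$ limit of Lemma \ref{le.ex.K2.lar}, where $\al_H^*\to(\rho_0^*-k_1)/\tau$, $\la_{I,H}^{*2}\to(2k_1-\rho_0^*)(\rho_0^*-k_1)/\tau^2$ and $\Phi\to\rho_0^*-k_1>0$; continuity in $d$ then fixes a threshold $d^*\ge d_0$ beyond which $\Phi>0$, hence $\tfrac{d}{d\al}\la_R^*(\al_H^*)<0$. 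The genuine obstacle is this last step — controlling the infinite series uniformly and certifying the sign of $\Phi$ away from the limiting regime; the large-$d$ hypothesis is the device that effectively reduces the problem to the single leading mode and to the computation already carried out in Lemma \ref{le.ex.K2.lar}.
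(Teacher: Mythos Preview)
Your setup through the implicit function theorem is identical to the paper's: the complex SLEP function $\mathcal G^*$, the computation $\mathrm{Im}\,\partial_{\la^*}\mathcal G^*(i\la_{I,H}^*,\al_H^*)<0$, and the resulting branch $\la^*(\al)$ are exactly what the paper does. The transversality arguments then diverge. The paper keeps $\tau$ explicit and writes the numerator of $\frac{d\la_R^*}{d\al}(\al_H^*)$ as $-(I_1+I_2)$; it gets $I_2>0$ from $\la_{I,H}^*<\al_H^*$, and forces $I_1>0$ through the hypothesis $d>d^*$ by choosing $d^*$ so that $\gamma_0>\al_0/\sqrt{3}-2k_2$, whence $\la_{I,H}^*<\sqrt{3}(\gamma_n+2k_2)$ for every $n$, which in turn gives a monotonicity bound making the series in $I_1$ at most $Y(0,0,k_2)<\tau$. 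Your substitution of $\tau$ via \eqref{SLEP-2: im-b} to produce $\Phi$ is a genuine and useful simplification: when $\la_{I,H}^*\le\al_H^*$ --- which covers Lemma~\ref{le.K2.sma} and cases \eqref{3.88a}, \eqref{3.88b} of Lemma~\ref{le.ex.K2.lar} --- every term of $\Phi$ is nonnegative and the leading term is positive, so transversality follows \emph{without} invoking $d>d^*$, which is sharper than the paper's statement in those cases.

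The gap is your handling of the ``remaining regime'', i.e.\ case~\eqref{3.88c} where $k_1\ge 2\rho_0^*/3$ and (by Lemma~\ref{le.la1}) $\la_{I,H}^*>\al_H^*$. You invoke $d>d^*$ to make $\gamma_n$ ($n\ge1$) large and claim the series in $\Phi$ collapses to its $n=0$ term, but that $n=0$ summand still carries the indefinite factor $(\gamma_0+2k_2)(\al_H^{*2}-\la_{I,H}^{*2})+2\al_H^*\la_{I,H}^{*2}$, and nothing you wrote controls its sign; the closing appeal to ``continuity in $d$'' then conflates two different large parameters ($d$ versus $k_2$). This is not how the paper uses $d>d^*$ (see the $\gamma_0$ bound above). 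A cleaner repair of your own route, since \eqref{3.88c} already assumes $k_2$ sufficiently large, is to drop the large-$d$ manoeuvre entirely and use only the $k_2\to\infty$ limit of Lemma~\ref{le.ex.K2.lar}: the full series in $\Phi$ is $O(1/k_2)$ (each term is $O((\gamma_n+2k_2)^{-1})$ with bounded $\al_H^*,\la_{I,H}^*$), while the leading term $k_1\al_H^{*2}/(\al_H^{*2}+\la_{I,H}^{*2})\to\rho_0^*-k_1>0$ by \eqref{tends2}, so $\Phi>0$ for all $k_2$ beyond a threshold.
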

		
		\begin{proof}
			Similar to \eqref{comFP-3}, we can obtain the following complex SLEP system of \eqref{EP-2}
			\begin{equation}\label{SLEP-4}
				\mathcal{G}^*(\la^*,\alpha):=\rho_0^*-\tau\la^*- k_1\left(\frac{\alpha}{\alpha+\la^*}+1\right)-\sum_{n=0}^{\infty}\dfrac{c_1^*c_2^*}{\gamma_n+\la^*+2 k_2}\psi_n^2(x^*)=0,
			\end{equation}
			where $\la^*=\la_R^*+i\la_I^*$.
			
			We only consider the existence of the desired eigenvalue $\la_R^*(\alpha)+ i\la_I^*(\alpha)$, while the existence of the eigenvalue $\la_R^*(\alpha)- i\la_I^*(\alpha)$ can be proved similarly.
			
			Differentiating \eqref{SLEP-4}  with respect to $\la^*$ at $(i\la_{I,H}^*,\alpha_H^*)$ yields
			\begin{equation}\label{F*/la*}
				\dfrac{\pa \mathcal{G}^*}{\pa \la^*}( i\la_{I,H}^*,\alpha_H^*)=-\tau+\dfrac{k_1\alpha_H^*}{(\alpha_H^*+i\la_{I,H}^*)^2}+\sum_{n=0}^{\infty}\dfrac{c_1^*c_2^*}{(\gamma_n+i\la_{I,H}^*+2 k_2)^2}\psi_n^2(x^*).
			\end{equation}
			
			One can directly check that 
			\begin{equation}\label{IM}
				\text{Im}\bigg\{\dfrac{\pa \mathcal{G}^*}{\pa \la^*}(i\la_{I,H}^*,\alpha_H^*)\bigg\}=-\dfrac{2 k_1(\alpha_H^*)^2\la_{I,H}^*}{\big((\alpha_H^*)^2+(\la_{I,H}^*)^2\big)^2}-\sum_{n=0}^{\infty}\dfrac{2c_1^*c_2^*\la_{I,H}^*(\gamma_n+2 k_2)}{\big((\gamma_n+2 k_2)^2+(\la_{I,H}^*)^2\big)^2}\psi_n^2(x^*)< 0.
			\end{equation}
			
			Then, $\frac{\pa \mathcal{G}^*}{\pa \la^*}( i\la_{I,H}^*,\alpha_H^*)\neq0$. Therefore, by the implicit function theorem, there exists a sufficiently small $\kappa_2>0$, such that for any $\al\in(\al_H^*-\kappa_2, \al_H^*+\kappa_2)$, the eigenvalue problem \eqref{EP-2} has the complex eigenvalue $\la^*(\alpha)=\la_R^*(\alpha)+ i\la_I^*(\alpha)$ such that $\la_R^*(\alpha_H^*)=0$, $\la_I^*(\alpha_H^*)=\la_{I,H}^*$.
			
			Next, we show that the transversality condition holds. That is, $\dfrac{d}{d\al}\la_R^*(\alpha_H^*)<0$.
			
			Substituting $\la^*(\alpha)$ into \eqref{SLEP-4}, differentiating \eqref{SLEP-4} with respect to $\alpha$, and setting $(\la^*(\al),\al)=(i\la_H^*,\alpha_H^*)$, we can obtain
			\begin{equation}\label{F^* pa alp}
				\frac{\pa \mathcal{G}^*}{\pa \la^*}(i\la_H^*,\alpha_H^*)\dfrac{d \la^*(\alpha_H^*)}{d \alpha}+\frac{\pa \mathcal{G}^*}{\pa \alpha}(i\la_H^*,\alpha_H^*)=0,
			\end{equation}
			where $\frac{\pa \mathcal{G}^*}{\pa \la^*}(i\la_H^*,\alpha_H^*)$ is given by \eqref{F*/la*}, and $\frac{\pa \mathcal{G}^*}{\pa \alpha}(i\la_H^*,\alpha_H^*)=-\frac{ k_1 i \la_H^*}{(i \la_H^*+\alpha_H^*)^2}$.
			
			Separating the real and imaginary parts of \eqref{F^* pa alp}, we have
			\begin{equation}\label{dla/dalp}
				\frac{d}{d\alpha}\la_R^*(\alpha_H^*)=-\frac{\text{Re}\big\{\frac{\pa \mathcal{G}^*}{\pa \la^*}\big\}\text{Re}\big\{\frac{\pa \mathcal{G}^*}{\pa \al}\big\}
					+\text{Im}\big\{\frac{\pa \mathcal{G}^*}{\pa \la^*}\big\}\text{Im}\big\{\frac{\pa \mathcal{G}^*}{\pa \al}\big\}}
				{\big( \text{Re}\big\{\frac{\pa \mathcal{G}^*}{\pa \la^*}\big\}\big) ^2+\big( \text{Im}\big\{\frac{\pa \mathcal{G}^*}{\pa \la^*}\big\}\big)^2},
			\end{equation}
			where $\frac{\pa \mathcal{G}^*}{\pa \la^*}$ and $\frac{\pa \mathcal{G}^*}{\pa \al}$ are evaluated at $(i\la_{I,H}^*,\alpha_H^*)$. In particular, $\text{Im}\big\{\frac{\pa \mathcal{G}^*}{\pa \la^*}\big\}$ is given by \eqref{IM}, and
			\begin{equation*}
				\begin{split}
					\text{Re}\bigg\{\frac{\pa \mathcal{G}^*}{\pa \la^*}\bigg\}=&-\tau+\frac{\alpha_H^*   k_1 \big((\alpha_H^*)^2-(\la_{I,H}^*)^2\big)}{\big((\alpha_H^*)^2+(\la_{I,H}^*)^2\big)^2}+c_1^*c_2^*\sum_{n=0}^{\infty}\frac{(\gamma_n+2 k_2)^2-(\la_{I,H}^*)^2}{\big((\gamma_n+2 k_2)^2+(\la_{I,H}^*)^2\big)^2}\psi_n^2(x^*),\\
					\text{Re}\bigg\{\frac{\pa \mathcal{G}^*}{\pa \al}\bigg\}=&\frac{-2 \alpha_H^*  k_1 (\la_{I,H}^*)^2 }{\big((\alpha_H^*)^2+(\la_{I,H}^*)^2\big)^2},\;
					\text{Im}\bigg\{\frac{\pa \mathcal{G}^*}{\pa \al}\bigg\}=\frac{\la_{I,H}^*  k_1 \big((\la_{I,H}^*)^2-(\alpha_H^*)^2 \big)}{\big((\alpha_H^*)^2+(\la_{I,H}^*)^2\big)^2}.
				\end{split}
			\end{equation*}
			
			One can check that 
			\begin{equation}\label{dla/dalp-2}
				\frac{d}{d\alpha}\la_R^*(\alpha_H^*)=-\frac{I_1+I_2}
				{\big( \text{Re}\big\{\frac{\pa \mathcal{G}^*}{\pa \la^*}(  i\la_{I,H}^*,\alpha_H^*)\big\}\big) ^2+\big( \text{Im}\big\{\frac{\pa \mathcal{G}^*}{\pa \la^*}(  i\la_{I,H}^*,\alpha_H^*)\big\}\big)^2},
			\end{equation}
			where
			\begin{equation}\label{I_1I_2}
				\begin{split}
					I_1&:=\bigg( \tau-c_1^*c_2^*\sum_{n=0}^{\infty}\frac{(\gamma_n+2 k_2)^2-(\la_{I,H}^*)^2}{((\gamma_n+2 k_2)^2+(\la_{I,H}^*)^2)^2}\psi_n^2(x^*)\bigg) \dfrac{ 2\alpha_H^*  k_1 (\la_{I,H}^*)^2 }{((\alpha_H^*)^2+(\la_{I,H}^*)^2)^2},\\
					I_2&:=\dfrac{2 k_1(\la_{I,H}^*)^2 ((\alpha_H^*)^2-(\la_{I,H}^*)^2)}{((\alpha_H^*)^2+(\la_{I,H}^*)^2)^2}\sum_{n=0}^{\infty}\dfrac{c_1^*c_2^*(\gamma_n+2 k_2)}{((\gamma_n+2 k_2)^2+(\la_{I,H}^*)^2)^2}\psi_n^2(x^*).
				\end{split}	
			\end{equation}
			
			Since $\la_{I,H}^*<\alpha_H^*$, we have $I_2>0$. It remains to show that $I_1>0$.
			
			Case 1: For any integer $n\ge 0$, if $ k_2=+\infty$, then $I_1$ is reduced to
			\begin{equation}\label{I_3}
				I_1=\dfrac{2\tau\alpha_H^*  k_1 (\la_{I,H}^*)^2 }{((\alpha_H^*)^2+(\la_{I,H}^*)^2)^2}>0.
			\end{equation}
			
			Case 2: For any integer $n\ge 0$ and $ k_2\in(0,+\infty)$.
			
			It is clear from \eqref{alp0}, Lemmas \ref{le.K2.sma} and \ref{le.ex.K2.lar} that
			\begin{equation}\label{K2laM}
				\la_{I,H}^*<\al_{H}^*<\alpha_0=\al_0( k_2):=\dfrac{ k_1}{\tau-Y(0,0, k_2)}.
			\end{equation}
			
			Define 
			\begin{equation*}
				h( k_2):=\dfrac{\al_0( k_2)}{\sqrt{3}}-2 k_2.
			\end{equation*}
			
			Then, one can check that $h'( k_2)<0$ since $\al_0'( k_2)<0$. Note that 
			\begin{equation*}
				\lim\limits_{ k_2\rightarrow0^+}h( k_2)=\dfrac{ k_1}{\sqrt{3}(\tau-Y(0,0,0))}=\dfrac{ k_1}{\sqrt{3}(\tau-\tau^*)}>0.
			\end{equation*}
			
			Recall that $\gamma_0>0$ is the principle eigenvalue of the operator $-d\frac{d^2}{dx^2}-(f_u^*g_v^*-f_v^*g_u^*)/f_u^*$ subject to the homogeneous Neumann boundary condition in the $L^2(0,\ell)$ sense. In particular, $\gamma_0$ is increasing in $d$.  On the other hand, one can check that $\tau^*$ is decreasing in $d$ (see \eqref{tau*}), and so is $\lim\limits_{ k_2\rightarrow0^+}h( k_2)$. Then, there exists a $d_1>0$, such that when $d>d_1$, $\gamma_0>\lim\limits_{ k_2\rightarrow0^+}h( k_2)$. 
			
			Choose $d^*=\max\{d_0,d_1\}$. Then, for any $d>d^*$, we have $\gamma_0>h( k_2)$. Thus, for any $n\geq0$, by $\al_0=\al_0( k_2)>\la_{I,H}^*$, we have
			\begin{equation*}
				\gamma_n\geq\gamma_0>h( k_2)=\dfrac{\al_0( k_2)}{\sqrt{3}}-2 k_2>\dfrac{\la_{I,H}^*}{\sqrt{3}}-2 k_2,
			\end{equation*}
			which implies that $\la_{I,H}^*<\sqrt{3}(\gamma_n+2 k_2)$ or equivalently $(\la_{I,H}^*)^2\in(0,3(\gamma_n+2 k_2)^2)$.
			
			One can check that, if $(\la_I^*)^2\in(0,3(\gamma_n+2 k_2)^2)$, then
			\begin{equation*}
				\dfrac{d}{d(\la_I^*)^2}\bigg\{\frac{(\gamma_n+2 k_2)^2-(\la_I^*)^2}{\left((\gamma_n+2 k_2)^2+(\la_I^*)^2\right)^2}\bigg\}=\frac{\left((\la_I^*)^2-3(\gamma_n+2 k_2)^2\right)\left((\la_I^*)^2+(\gamma_n+2 k_2)^2\right)}{\left((\gamma_n+2 k_2)^2+(\la_I^*)^2\right)^4}<0,
			\end{equation*}
			which yields
			\begin{equation*}
				\frac{(\gamma_n+2 k_2)^2-(\la_I^*)^2}{\left((\gamma_n+2 k_2)^2+(\la_I^*)^2\right)^2}\le \frac{(\gamma_n+2 k_2)^2-(\la_I^*)^2}{\left((\gamma_n+2 k_2)^2+(\la_I^*)^2\right)^2}\bigg|_{(\la_I^*)^2=0}=\frac{1}{(\gamma_n+2 k_2)^2}.
			\end{equation*}
			
			Then, by $\la_{I,H}^*<\sqrt{3}(\gamma_n+2 k_2)$, we can obtain
			\begin{equation*}
				c_1^*c_2^*\sum_{n=0}^{\infty}\frac{(\gamma_n+2 k_2)^2-(\la_{I,H}^*)^2}{\left((\gamma_n+2 k_2)^2+(\la_{I,H}^*)^2\right)^2}\psi_n^2(x^*)\le c_1^*c_2^*\sum_{n=0}^{\infty}\frac{1}{(\gamma_n+2 k_2)^2}\psi_n^2(x^*)=Y(0,0, k_2),
			\end{equation*}
			which leads to
			\begin{equation*}
				\tau-c_1^*c_2^*\sum_{n=0}^{\infty}\frac{(\gamma_n+2 k_2)^2-(\la_{I,H}^*)^2}{\left((\gamma_n+2 k_2)^2+(\la_{I,H}^*)^2\right)^2}\psi_n^2(x^*)\ge\tau-Y(0,0, k_2)>\tau-\tau^*> 0.
			\end{equation*}
			
			Then, $I_1>0$. 
			
			So far, we have proved that under either $ k_2=+\infty$ or $ k_2\in(0,+\infty)$, $I_1>0$ so that $\frac{d}{d\alpha}\la_R^*(\alpha_H^*)<0$. This completes the proof.
		\end{proof}

		As the consequence of Lemma \ref{comei}, we have the following results (in the sense of $\ep\to0^+$).
		\begin{lemma}
			Suppose that all the conditions of Lemma \ref{comei} hold. Then, the following conditions hold true:
			\begin{enumerate}
				\item When  $\alpha>\alpha_H^*$, all the eigenvalues of the limiting eigenvalue problem \eqref{SLEP-2} have negative real parts. 
				\item At $\alpha=\alpha_H^*$, the limiting eigenvalue problem \eqref{SLEP-2} has a pair of simple purely imaginary eigenvalues $\pm i\la_H^*$, with all the remaining eigenvalues having strictly negative real parts.
				\item When $\alpha\in(\alpha_H^*-\kappa_3,\alpha_H^*)$ for some sufficiently small $\kappa_3>0$, the eigenvalue problem \eqref{SLEP-2} has one eigenvalue with strictly positive real part. In particular, if \eqref{3.88b} holds, then for any $\al\in(0,\al_H^*)$, the limiting eigenvalue problem \eqref{SLEP-2} has one eigenvalue with strictly positive real part.
			\end{enumerate}
		\end{lemma}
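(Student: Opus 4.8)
The plan is to read the stability of the limiting problem \eqref{SLEP-2} off a continuation argument in $\alpha$, anchored at the non‑delayed limit $\alpha=+\infty$ (where Proposition~\ref{p*ei} already settles everything), and to pin down the one relevant crossing of the imaginary axis using the curves $\lambda_{I,1}^*(\alpha)$, $\lambda_{I,2}^*(\alpha)$ and the transversality already computed in Lemma~\ref{comei}. Two facts valid for \emph{every} $\alpha>0$ come first. Since $(k_1,k_2)\in\Gamma_2\cup\Gamma_{3-1}$ is disjoint from the curve $\Gamma$ of Proposition~\ref{p*ei}, equation \eqref{SLEP-2: re} has no solution, so $\lambda^*=0$ is never an eigenvalue of \eqref{SLEP-2}. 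A nonzero purely imaginary eigenvalue $i\lambda_I^*$ forces both \eqref{SLEP-2: im-a} and \eqref{SLEP-2: im-b}: the former has the unique positive root $\lambda_{I,1}^*(\alpha)$, defined for all $\alpha>0$, and the latter the unique positive root $\lambda_{I,2}^*(\alpha)$ when $\alpha\in(0,\alpha_0)$ and no positive root when $\alpha\ge\alpha_0$ (part~$(3)$ of Lemma~\ref{le.nonex.}). Hence \eqref{SLEP-2} has a purely imaginary eigenvalue at a given $\alpha$ if and only if $\alpha\in(0,\alpha_0)$ and $\lambda_{I,1}^*(\alpha)=\lambda_{I,2}^*(\alpha)$, and then the only such eigenvalue is the pair $\pm i\lambda_{I,1}^*(\alpha)$; by Lemmas~\ref{le.K2.sma} and~\ref{le.ex.K2.lar} this set of $\alpha$ is finite with largest element $\alpha_H^*$ (see \eqref{lzxx-1}), so there are no purely imaginary eigenvalues for $\alpha\in(\alpha_H^*,+\infty)$.

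Conclusion~1 then follows by counting: restricting attention to the closed right half‑plane, on which the complex SLEP function $\mathcal{G}^*(\lambda^*,\alpha)$ of \eqref{SLEP-4} is analytic and behaves like $-\tau\lambda^*$ at infinity, there are only finitely many eigenvalues and they vary continuously in $\alpha$; on $(\alpha_H^*,+\infty)$ none of them can sit on the imaginary axis, so their number is constant there, and letting $\alpha\to+\infty$ — where \eqref{EP-2} degenerates to \eqref{P-L-2b} and \eqref{SLEP-2} to the $\alpha=\infty$ singular limit problem — makes this number $0$ for large $\alpha$, hence on all of $(\alpha_H^*,+\infty)$, because $(k_1,k_2)\in\Gamma_2\cup\Gamma_{3-1}$ lies in the stable region of Proposition~\ref{p*ei}.

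Conclusion~2 is immediate afterwards: Lemma~\ref{comei} supplies the simple pair $\pm i\lambda_{I,H}^*$ at $\alpha=\alpha_H^*$, the first paragraph shows these are the only imaginary eigenvalues there, and letting $\alpha\downarrow\alpha_H^*$ in conclusion~1 keeps every other eigenvalue in the open left half‑plane. For the first part of conclusion~3 I would invoke the transversality $\frac{d}{d\alpha}\lambda_R^*(\alpha_H^*)<0$ from Lemma~\ref{comei}, which yields an eigenvalue with strictly positive real part for $\alpha$ slightly below $\alpha_H^*$. For the ``in particular'' statement, \eqref{3.88b} places us in the setting of Lemma~\ref{le.ex.K2.lar}, where $\alpha_H^*=\alpha^*$ is the \emph{unique} $\alpha\in(0,\alpha_0)$ producing a purely imaginary eigenvalue; combining this with part~$(3)$ of Lemma~\ref{le.nonex.} for $\alpha\ge\alpha_0$ and with the absence of zero eigenvalues, the same counting argument shows that the number of eigenvalues of \eqref{SLEP-2} with positive real part is constant on the connected interval $(0,\alpha_H^*)$, and it is at least $1$ near $\alpha_H^*$ by the first part, hence at least $1$ throughout.

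The routine ingredients here are the two preliminary facts and the transversality (already proved in Lemma~\ref{comei}). The step that needs care is the counting/continuation argument: one must verify that the eigenvalues of \eqref{SLEP-4} depend continuously on $\alpha$ and do not escape to infinity, and — more importantly — that the list of imaginary‑axis crossings supplied by Lemmas~\ref{le.K2.sma}, \ref{le.ex.K2.lar} together with part~$(3)$ of Lemma~\ref{le.nonex.} for $\alpha\ge\alpha_0$ genuinely captures \emph{every} sign change of a real part on $(0,+\infty)$. The bookkeeping is the real obstacle: in the small‑$k_2$ regime of Lemma~\ref{le.K2.sma} there are several crossings $\alpha_1^*\le\cdots\le\alpha_{2m}^*$, so only the local statement near $\alpha_H^*=\alpha_{2m}^*$ persists, whereas it is precisely the uniqueness of the crossing under \eqref{3.88b} that promotes conclusion~3 to the whole interval $(0,\alpha_H^*)$.
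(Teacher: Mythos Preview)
The paper presents this lemma without proof, stating it only ``as the consequence of Lemma~\ref{comei}''. Your continuation argument in $\alpha$, anchored at the non-delayed limit $\alpha=+\infty$ where Proposition~\ref{p*ei} guarantees stability on $\Gamma_2\cup\Gamma_{3-1}$, together with the identification of \emph{all} imaginary-axis crossings via $\lambda_{I,1}^*(\alpha)=\lambda_{I,2}^*(\alpha)$ and the transversality from Lemma~\ref{comei}, is correct and is precisely the natural way to fill in the omitted details; your distinction between the multiple-crossing regime of Lemma~\ref{le.K2.sma} (yielding only the local statement near $\alpha_H^*=\alpha_{2m}^*$) and the unique-crossing regime of Lemma~\ref{le.ex.K2.lar} (yielding the global statement on $(0,\alpha_H^*)$ under \eqref{3.88b}) correctly explains why the ``in particular'' clause singles out \eqref{3.88b}.
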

		
		Finally, we shall state the main results on Hopf bifurcation of \eqref{C-LE-D} around the symmetric solution $(\widetilde{u}(x;\ep),\widetilde{v}(x;\ep),\widetilde{u}(x;\ep),\widetilde{v}(x;\ep))$ for any fixed $\ep\in(0,\ep_0)$, see Fig. \ref{Fig.s-dep}.
		
		\begin{figure}[!t]
			\centering
			%[pattern=vertical lines]
			\begin{tikzpicture}
				\fill [fill=green!20]  (0,4) --  (0,1) parabola (2.4,4)--cycle;
				\fill [fill=blue!20]  (0,0) --  (0,1) parabola (2.4,4)--(2.5,4)--(2.5,0)--cycle;
				\fill [fill=red!20]  (2.5,0) rectangle (5,4);
				\fill [fill=magenta!20]  (5,0) rectangle (7.5,4);
				
				\fill [pattern=dots]  (0,1) parabola (2.4,4)--(3.2,4) parabola bend (0,1) (0,1);
				
				\draw[->] (0,0) --(7.55,0) node[right] {$k_1$};
				\draw[->] (0,0) --(0,4.05) node[above] {$k_2$};
				\draw (2.5,4)--(2.5,0) node [below] {$\frac{\rho_0^*}{2}$};
				\draw (5,4)--(5,0) node [below] {$\rho_0^*$};
				\draw (0,1) parabola (2.4,4);
				\draw[densely dashed] (0,1) parabola (3.2,4);
				\draw[densely dashed] (3.33,4) -- (3.33,0) node[below]{$\frac{2\rho_0^*}{3}$};
				\draw [densely dashed](2,3.07)--(2,0) node [below]{$k_1$};
				\draw [densely dashed](2,2.17)--(0,2.17) node [left] {$\widehat{k}_2^*$};
				\draw [densely dashed](2,3.07)--(0,3.07) node [left] {$\xi(k_1)$};
				\draw  (0,0) node[left] {$O$};
				\draw node at (1.4,0.8){$\Gamma_2$};
				\draw node at (1,2.6){$\Gamma_1$};
				\draw node at (4.2,2){$\Gamma_{3-1}$};
				\draw node at (6.25,2){$\Gamma_{3-2}$};			
			\end{tikzpicture}
			\caption{Stability diagram of the symmetric solution $(\widetilde{u}(x;\ep),\widetilde{v}(x;\ep),\widetilde{u}(x;\ep),\widetilde{v}(x;\ep))$ with respect to \eqref{C-LE-D} at any fixed $\ep\in(0,\ep_0)$, where $\Gamma_{1}$, $\Gamma_{2}$, $\Gamma_{3-1}$ and $\Gamma_{3-2}$ are explicitly given by \eqref{G12312}. The symmetric solution is always unstable in $\Gamma_{1}$, while stable in $\Gamma_{3-2}$ regardless of $\alpha>0$. In the dotted subregion of $\Gamma_{2}\cup\Gamma_{3-1}$, i.e.  $(k_1,k_2)\in((0,\rho_0^*/2)\times(\widehat{k}_2^*,\xi(k_1)))\cup((\rho_0^*/2,2\rho_0^*/3)\times(\widehat{k}_2^*,\infty))$, system \eqref{C-LE-D} undergoes Hopf bifurcation near the symmetric solution at $\alpha=\alpha_H^\ep$, which can also be triggered if $k_2$ is slightly less than $\widehat{k}_2^*$; or $k_2$ is sufficiently large with $k_1\in [2\rho_0^*/3,\rho_0^*)$.}
			\label{Fig.s-dep}
		\end{figure}
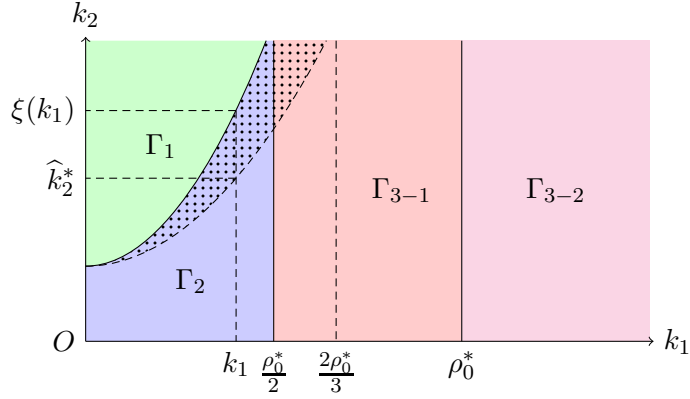
		
		\begin{theorem}\label{th.3-1} 
			Suppose that $\tau\in(\tau^*,+\infty)$ and $d>d^*$, where $d^*$ is defined in Lemma \ref{comei}. Let $(k_1,k_2)$ be fixed such that one of \eqref{H_1}-\eqref{3.88c} is satisfied. Then  for any fixed $\ep\in(0,\ep_0)$, there exists a critical value $\alpha_H^\ep>0$, such that the solution $(\widetilde{u}(x;\ep),\widetilde{v}(x;\ep),\widetilde{u}(x;\ep),\widetilde{v}(x;\ep))$ is stable with respect to system \eqref{C-LE-D} when $\alpha>\alpha_H^{\ep}$, while unstable when $\al$ is slightly less than $\alpha_H^{\ep}$. In particular, system \eqref{C-LE-D} undergoes a Hopf bifurcation at $\alpha=\alpha_H^{\ep}$ for $\ep\in(0,\ep_0)$. That is to say, a family of periodic solutions of \eqref{C-LE-D} occurs around $(\widetilde{u}(x;\ep),\widetilde{v}(x;\ep),\widetilde{u}(x;\ep),\widetilde{v}(x;\ep))$.
		\end{theorem}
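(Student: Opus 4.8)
The plan is to reduce the linearized stability question to the two scalar‑coupled eigenvalue problems \eqref{EP-1} and \eqref{EP-2}, dispose of \eqref{EP-1} directly, and then run the singular‑limit eigenvalue (SLEP) analysis for \eqref{EP-2} at the level of a fixed small $\ep>0$, in complete parallel with the passage from Proposition~\ref{p*ei} to Theorem~\ref{th.2-1}. For \eqref{EP-1}: the lemma following Proposition~\ref{propAB1} shows that for all $k_1,k_2>0$ and all $\alpha>0$ the roots of \eqref{SLEP-1} lie strictly in the left half plane, so the limiting ($\ep\to0^+$) spectrum of \eqref{EP-1} does too; since the only coupling term there, $\ep k_1(\theta_\alpha-1)$, is $O(\ep)$, continuity of the spectrum in $\ep$ yields a threshold below which every eigenvalue of \eqref{EP-1} has negative real part, uniformly for $\alpha$ in compact subsets of $(0,\infty)$. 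Hence the stability of $(\widetilde u(x;\ep),\widetilde v(x;\ep),\widetilde u(x;\ep),\widetilde v(x;\ep))$ with respect to \eqref{C-LE-D} is controlled entirely by \eqref{EP-2}.

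Next I would derive the complex characteristic equation of \eqref{EP-2} at fixed $\ep$. Exactly as in the derivation of \eqref{FPep}, I solve the first equation of \eqref{EP-2} for $w$, substitute into the second, invert the resulting operator (the delayed analogue of $T_\la^{\ep}$ in \eqref{Tlaepz}, whose invertibility for $\mathrm{Re}\,\lambda+2k_2>-\mu^*$ and small $\ep$ follows from the methods of Lemma~4.1 of \cite{NM} and Lemma~2.1 of \cite{NF}), and pair against $f_v^\ep\phi_0^\ep/\sqrt\ep$. This produces a complex function $\mathcal{F}_\alpha(\lambda,k_1,k_2,\ep)$ whose zeros (after excluding $k_1\in\{\rho_0^*/2,\rho_0^*\}$) are precisely the eigenvalues of \eqref{EP-2} and which satisfies $\mathcal{F}_\alpha(\lambda,k_1,k_2,\ep)\to\mathcal{G}^*(\lambda,\alpha)$ as $\ep\to0^+$, with $\mathcal{G}^*$ the complex SLEP function of \eqref{SLEP-4}. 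The one new feature relative to \S2 and \S3 is the pole of $\theta_\alpha=\alpha/(\alpha+\lambda)$ at $\lambda=-\alpha$; since all eigenvalues of interest cluster near $i\lambda_{I,H}^*\neq-\alpha_H^*$, this pole plays no role.

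Now I would invoke Lemma~\ref{comei}. At $(\lambda^*,\alpha)=(i\lambda_{I,H}^*,\alpha_H^*)$ one has $\mathcal{G}^*=0$ and $\partial\mathcal{G}^*/\partial\lambda^*\neq0$, so the implicit function theorem gives, for $\ep$ small, a locally unique branch $\lambda=\lambda(\alpha,\ep)$ near $i\lambda_{I,H}^*$ solving $\mathcal{F}_\alpha(\lambda(\alpha,\ep),k_1,k_2,\ep)=0$ with $\lambda(\alpha,\ep)\to\lambda^*(\alpha)=\lambda_R^*(\alpha)\pm i\lambda_I^*(\alpha)$, the conjugate pair furnished by Lemma~\ref{comei}. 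Because $\frac{d}{d\alpha}\lambda_R^*(\alpha_H^*)<0$ (the transversality of Lemma~\ref{comei}, valid under $d>d^*$), a second application of the implicit function theorem to $\mathrm{Re}\,\lambda(\alpha,\ep)=0$ yields $\alpha=\alpha_H^\ep$ with $\alpha_H^\ep\to\alpha_H^*$ as $\ep\to0^+$, and then $\lambda_I^*(\alpha_H^\ep,\ep)>0$ and $\frac{d}{d\alpha}\mathrm{Re}\,\lambda(\alpha_H^\ep,\ep)<0$ for $\ep$ small; simplicity of the pair $\pm i\lambda_I^*(\alpha_H^\ep,\ep)$ follows by the argument of \cite{NM,TMN} used in Theorem~\ref{th.2-1}. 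All remaining eigenvalues of \eqref{EP-1}--\eqref{EP-2} keep strictly negative real parts for $\alpha$ in a neighbourhood of $\alpha_H^*$ (the consequence of Lemma~\ref{comei} for the limiting problem, together with continuity in $\ep$), so the eigenvalue picture at $\alpha=\alpha_H^\ep$ is exactly that of a Hopf bifurcation: defining $\ep_0$ as the minimum of the finitely many thresholds obtained above, the standard Hopf bifurcation theorem for reaction-diffusion systems applies, giving stability for $\alpha>\alpha_H^\ep$, instability for $\alpha$ slightly below $\alpha_H^\ep$, and a branch of periodic solutions of \eqref{C-LE-D} emanating from the symmetric steady state.

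The heaviest step, and the main obstacle, is the uniform spectral bookkeeping needed to justify the reduction to $\mathcal{F}_\alpha=0$: one must verify that this characteristic equation captures every eigenvalue of \eqref{EP-2} in a fixed neighbourhood of the imaginary axis---none escaping through the $\dag$-parts of $\mathcal{L}_\alpha^\ep$ and of the delayed analogue of $T_\la^\ep$, and none migrating in from $\{\mathrm{Re}\,\lambda\le-\mu^*\}$---and that the convergence $\mathcal{F}_\alpha\to\mathcal{G}^*$ and the requisite resolvent bounds hold locally uniformly in $(\alpha,k_1,k_2,\lambda)$. This is exactly the point where the framework of \cite{NM} must be adapted to the present four-component, distributed-delay setting, and carrying it out rigorously is the technical core of the argument.
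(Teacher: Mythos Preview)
Your proposal is correct and follows essentially the same route as the paper's proof: derive the complex characteristic function $\mathcal{G}(\lambda,\alpha,\ep)$ for \eqref{EP-2} in parallel with \eqref{FPep}, pass to the limit $\mathcal{G}\to\mathcal{G}^*$, invoke Lemma~\ref{comei} for $\mathcal{G}^*(\pm i\lambda_{I,H}^*,\alpha_H^*)=0$ and $\partial\mathcal{G}^*/\partial\lambda\neq0$, then apply the implicit function theorem twice (first for $\lambda^\ep(\alpha)$, then for $\alpha_H^\ep$ via the transversality $\frac{d}{d\alpha}\lambda_R^*(\alpha_H^*)<0$), and conclude Hopf bifurcation with simplicity handled as in \cite{NM,TMN}. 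Your explicit treatment of \eqref{EP-1} and your remarks on the pole of $\theta_\alpha$ and the uniform spectral bookkeeping are welcome elaborations, but the core argument matches the paper's.
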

		
		\begin{proof}
			Similar to \eqref{FPep}, for \eqref{EP-2}, we can derive the following complex characteristic equation
			\begin{equation}\label{malG}
				\mathcal{G}\left(\la,\alpha,\ep\right):= \rho(\ep)-k_1\left(1+\frac{\alpha}{\la+\alpha}\right)-\tau\la-\left\langle (T_{\la,\alpha}^{\ep})^{-1}\frac{g_u^{\ep}\phi_0^{\ep}}{\sqrt{\ep}},\frac{-f_v^{\ep}\phi_0^{\ep}}{\sqrt{\ep}} \right\rangle=0,
			\end{equation}
			where $(T_{\la,\alpha}^{\ep})^{-1}$, the inverse operator of 
			\begin{equation*}
				T_{\la,\alpha}^{\ep}:=-d\frac{d^2}{dx^2}-g_v^{\ep}+2k_2+g_u^{\ep}\left(\ep^2\dfrac{d^2}{dx^2}+f_u^{\ep}-\ep k_1\left(1+\frac{\alpha}{\la+\alpha}\right)-\ep\tau\la\right)^{\dag}f_v^{\ep}+\la,
			\end{equation*}
			exists for $\text{Re}\la+2k_2>-\mu^*$ by Lemma 4.1 of \cite{NM}, where $\mu^*$ could be chosen smaller if necessary.
			
			One can observe that $\mathcal{G} (\la,\alpha,\ep)\to \mathcal{G}^* (\la,\alpha)$ as $\ep\to0^+$, where $\mathcal{G}^* (\la,\alpha)$ is given by \eqref{SLEP-4}, then it follows that
			\begin{equation*}
				\begin{split}
					\mathcal{G} (\pm i \la_{I,H}^*,\alpha_H^*,0)=\mathcal{G}^*(\pm i \la_{I,H}^*,\alpha_H^*)=0,\;\frac{\pa \mathcal{G}}{\pa \la}(\pm i \la_{I,H}^*,\alpha_H^*,0)=\frac{\pa \mathcal{G}^*}{\pa \la}(\pm i \la_{I,H}^*,\alpha_H^*)\neq 0,
				\end{split}		
			\end{equation*}
			where Lemma \ref{comei} is used. 
			
			By the implicit function theorem, for $(\alpha,\ep)$ in a small neighborhood $\mathcal{O}$ of $(\alpha_H^*,0)$, there exists a unique $\la^\ep(\alpha):=\la_R(\alpha,\ep)+i\la_I(\alpha,\ep)$, such that $\mathcal{G}(\la^\ep(\alpha),\alpha,\ep)=0$ and in particular,
			\begin{equation*}
				\lim\limits_{\ep\to0^+}\la^\ep(\alpha)=\la^*(\alpha),\; \lim\limits_{\ep\to0^+}\la^\ep(\alpha_H^*)=\la^*(\alpha_H^*)=\pm i\la_{I,H}^*.
			\end{equation*}
			
			Since
			\begin{equation*}
				\la_R(\alpha_H^*,0)=\la_R^*(\alpha_H^*)=0,\;\la_I(\alpha_H^*,0)=\la_{I,H}^*\neq0,\; \frac{d\la_R}{d\alpha}(\alpha_H^*,0)=\frac{d\la_R^*}{d\alpha}(\alpha_H^*)< 0,
			\end{equation*}
			by the implicit function theorem, for small $\ep>0$, there exists a unique continuous differentiable function $\al_H^\ep=\al_H(\ep)$ such that 
			\begin{equation*}
				\la_R(\alpha_H^\ep,\ep)=0,\;\la_{I,H}^\ep:=\la_I(\alpha_H^\ep,\ep)\neq0,\; \frac{d\la_R}{d\alpha}(\alpha_H^\ep,\ep)< 0.
			\end{equation*}
			
			Thus, at $\al=\al_H^\ep$, the eigenvalue problem \eqref{malG} has a pair of simple purely imaginary eigenvalues and the transversality condition holds. Hence, by the Hopf bifurcation theorem \cite{CR,HKW}, the system undergoes Hopf bifurcation at $\al=\al_H^\ep$ for small $\ep>0$.
			
			Therefore, when $\alpha>\alpha_H^\ep$, all the eigenvalues of \eqref{D-L-2-a} have negative real parts. On the contrary, if $\alpha\in(\alpha_H^\ep-\kappa_4,\alpha_H^\ep)$ for sufficiently small $\kappa_4>0$, the eigenvalue problem \eqref{D-L-2-a} has a pair of complex eigenvalues with positive real part.
			
			Let $(0,\widehat{\ep}_0)$ be the new definition region of $\ep$ and drop the hat of $\widehat{\ep}_0$. Then the simplicity of the pair of critical complex eigenvalues of \eqref{EP-2} for $\ep\in(0,\ep_0)$ can be obtained by the similar argument in \cite{NM}, which completes the proof.
		\end{proof}
		
		\section{Concluding remarks}
		
		In this paper, we mainly consider the dynamics of a kind of two-layer coupled Lengyel-Epstein model characterizing the spatiotemporal pattern formations in the CIMA chemical reactions. We make the following concluding remarks:
		\begin{enumerate}
			\item In the classical activator-inhibitor system (decoupled), if the diffusion rate $\epsilon$ of the activator $u$ is small but the diffusion rate $d$ of the inhibitor $v$ is large, then we can expect the occurrence of Turing patterns (see for instance \cite{JNT, JSWY}). In this paper, for the two-layered coupled system, we can obtain the same phenomena, that is, if the inter-reactor diffusion rate $k_1$ of the activator $u$ is not that large but the inter-reactor diffusion rate $k_2$ of the inhibitor $v$ is large enough, then we can also find Turing patterns (see Theorem \ref{th.2-1}). 
			\item A slightly large $k_1$ tends to offset the impacts of $k_2$ and $\alpha$ in the sense that once $k_1$ is larger than $\rho_0^*$ (see Theorem \ref{cor.ue}), the symmetric steady state solution is always stable regardless of $k_2$ and $\alpha$. 
			\item It is well-known that delay can induce the instability of the steady state solutions and trigger Hopf bifurcations. However, in the existing literatures, most of the works focus on Hopf bifurcations branching from the constant steady state solutions. It is rare to consider the Hopf bifurcation from non-constant steady state solutions as delay varies. For our first attempt, we study in this paper how the distributed-delayed coupling can affect the stability of the non-constant steady state solution and bring out the existence of Hopf bifurcation. Another interesting question is what happens if we replace distributed-delayed coupling with the discrete-delayed coupling. This is still an open question, which will be our next attempt.
			\item In the present paper, we mainly concentrate on the stability/Turing instability of the symmetric steady state solution $(\widetilde{u}(x;\ep),\widetilde{v}(x;\epsilon), \widetilde{u}(x;\ep),\widetilde{v}(x;\epsilon))$ affected by $k_1,k_2$ and $\alpha$. However, we are more than willing to know how $k_1$ and $k_2$ affect the stability of the symmetric periodic solution $(\widetilde{u}_p(t,x;\ep),\widetilde{v}_p(t, x;\epsilon), \widetilde{u}_p(t,x;\ep),\widetilde{v}_p(t, x;\epsilon))$. We would like to mention that in \cite{Yi}, the Turing instability of the spatially homogeneous periodic solutions is considered. We conjecture that the perturbation technique used in \cite{Yi} seems to work well for our model when studying the Turing instability of the symmetric periodic solution.
			\item In system with only one kind of diffusion, it is meaningless to define Turing instability of the spatially non-homogeneous solutions (steady state solutions and periodic solutions). However, when system has more than one kind of diffusion, Turing instability of the above solutions can be well-defined.
		\end{enumerate}
		
		\appendix	
		\section{Appendix: The spectral properties of $\ep^2\dfrac{d^2}{dx^2}+f_u^\ep$ for small $\epsilon$}
		\renewcommand{\thelemma}{A\arabic{lemma}}		
		The following results are due to Nishiura and Fujii \cite{NF} (see also Lemmas 2.2 and 2.3 of \cite{NM}):
		\begin{lemma}[Spectral properties (\cite{NF, NM})]\label{lemNF1} 
			Assume that $0<\ep<\ep_0$, so that $(\widetilde{u}(x;\epsilon),\widetilde{v}(x;\epsilon))$ is the singularly perturbed steady state solution of system \eqref{2.1}. Then, the eigenvalue problem
			\begin{equation*}
				\begin{cases}
					\begin{split}
						\ep^2\dfrac{d^2\phi}{dx^2}+f_u^\ep\phi&=\mu\phi,\;x\in(0,\ell),\\
						\phi'(0)=\phi'(\ell)&=0,
					\end{split}
				\end{cases}
			\end{equation*}
			has complete and orthonormal pairs of eigenvalues and eigenfunctions $\{(\mu_n^{\ep}, \phi_n^{\ep}(x))\}_{n=0}^{\infty}$ in $L^2(0,\ell)$ sense. Moreover,
			\begin{enumerate}
				\item There exists a $\mu_*<0$ independent of $\ep$, such that $\mu_0^{\ep}>0>\mu_*>\mu_1^{\ep}>\cdots$.
				In particular, $\mu_0^{\ep}=\rho(\ep)\ep$, where $\rho(\ep)$ is a positive continuous function up to $\ep\to 0^+$ and
				\begin{equation}\label{App1}		
					\rho_0^{*}=\lim_{\ep\to 0^+}\rho(\ep)=\dfrac{(\kappa^*)^2}{d}M'(\widehat{v} )\int_0^{x^*}g(U^*,V^*)dx,
				\end{equation}
				where $x^*\in (0,\ell)$ is the layer position of the reduced solution $(U^*(x), V^*(x))$, $\widehat{v}$ is the zero of $M(\widehat{v})$, and $\kappa^*$ is a positive constant.
				\item Let $F(u, v)$ be a smooth function of $u$ and $v$. Then, as $\ep\to 0^+$, 
				\begin{equation*}
					(\ep^2\dfrac{d^2}{dx^2}+f_u^\ep- \ep\tau\la)^{\dag}(F(\widetilde{u}(x; \ep), \widetilde{v}(x;\ep))h) \to \ds\frac{F(U^*(x), V^*(x))h}{f_u(U^*(x), V^*(x))} \;\text{in strongly $L^2$-sense}
				\end{equation*}
				for any function $h\in L^2(0,\ell) \cap L^{\infty}(0,\ell)$, $\tau\in \mathbb{R}$, and $\la\in\mathbb{C}$, where for any $\widehat h\in L^2(0,\ell) \cap L^{\infty}(0,\ell)$,
				\begin{equation*}
					(\ep^2\dfrac{d^2}{dx^2}+f_u^\ep- \ep\tau\la)^{\dag}\widehat h:=\sum_{n\geq1}\dfrac{\langle \widehat h, \phi_n^\ep\rangle}{\mu_0^\ep-\ep\tau\la}\phi_n^\ep.
				\end{equation*}
				\item Let $\delta(x-x^*)$ be the Dirac's delta function at $x = x^*$ and $\kappa^*$ be stated in \eqref{App1}. Then, in the $H^{-1}$ sense, it holds true that
				\begin{equation*}
					\begin{split}
						\lim\limits_{\ep\to 0^+}\ds\frac{f_v^{\ep}\phi_0^{\ep}}{\sqrt{\ep}}&=\kappa^*M'(\widehat{v})\delta(x - x^*),\\
						\lim\limits_{\ep\to 0^+}\ds\frac{g_u^{\ep}\phi_0^{\ep}}{\sqrt{\ep}}&=\kappa^*\bigg[h_+(\widehat{v})-h_-(\widehat{v})+\bigg(\dfrac{h_-(\widehat{v})}{1+(h_-(\widehat{v}))^2}-\dfrac{h_+(\widehat{v})}{1+(h_+(\widehat{v}))^2}\bigg)\widehat{v}\bigg]\delta(x - x^*),
					\end{split}
				\end{equation*}
				where $M'(\widehat{v})<0$, $h_+(\widehat{v})-h_-(\widehat{v})+\big(\frac{h_-(\widehat{v})}{1+(h_-(\widehat{v}))^2}-\frac{h_+(\widehat{v})}{1+(h_+(\widehat{v}))^2}\big)\widehat{v}>0$.
			\end{enumerate}
		\end{lemma}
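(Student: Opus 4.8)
The plan is to follow the singular-limit eigenvalue (SLEP) framework of Nishiura and Fujii \cite{NF}, in the abstract form of Lemmas 2.2--2.3 of \cite{NM}, since the structural hypotheses (A0)--(A5) for the Lengyel--Epstein nonlinearities have already been verified in the proof of Theorem \ref{th.1-0}. The starting point is the layered geometry of the steady state recorded in \eqref{tends}: $\widetilde u(x;\ep)$ has a single internal transition layer at $x=x^*$, jumping from $h_-(\widehat v)$ to $h_+(\widehat v)$ across a zone of width $O(\ep)$, while $\widetilde v(x;\ep)\to V^*(x)$ is slowly varying. Consequently the potential $f_u^\ep$ in \eqref{fuvep} is strictly negative in the outer region (this is Step 4 of Theorem \ref{th.1-0}, i.e. $f_u^*=f_u(U^*,V^*)<0$ on $\mathscr R_\pm$) but turns positive inside the thin layer, where $\widetilde u$ crosses the middle branch $h_0$. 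The analysis therefore splits into an inner (stretched) problem capturing the layer and an outer problem governed by the negative potential $f_u^*$.

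For the spectral statement in part 1, I would introduce the stretched variable $\xi=(x-x^*)/\ep$, under which the eigenvalue equation becomes $\phi_{\xi\xi}+f_u^\ep\phi=\mu\phi$. At leading order the inner operator is $\mathcal A:=\partial_\xi^2+f_u(U_0(\xi),\widehat v)$, where $U_0$ is the heteroclinic layer profile solving $U_0''+f(U_0,\widehat v)=0$ with $U_0(\pm\infty)=h_\pm(\widehat v)$. Differentiating this ODE shows $U_0'$ is an eigenfunction of $\mathcal A$ with eigenvalue $0$; since $h_+(\widehat v)>h_-(\widehat v)$ we have $U_0'>0$, so $0$ is the simple principal eigenvalue, and because $f_u(h_\pm(\widehat v),\widehat v)<0$ (Step 4) the rest of the spectrum of $\mathcal A$ lies in $(-\infty,-c]$ for some $c>0$. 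This isolation forces $\mu_0^\ep=O(\ep)$ while $\mu_1^\ep,\mu_2^\ep,\dots\le\mu_*<0$ uniformly in $\ep$. The coefficient $\rho_0^*=\lim_{\ep\to0^+}\mu_0^\ep/\ep$ then follows from a Lyapunov--Schmidt solvability condition at order $\ep$: projecting the first-order terms of the matched expansion onto the kernel $U_0'$, and inserting the slow derivative $(V^*)'(x^*)=-d^{-1}\int_0^{x^*}g(U^*,V^*)\,dx$ obtained by integrating the slow equation in \eqref{R2.3-1} using $V'(0)=0$, reproduces the formula \eqref{App1}. Its sign $\rho_0^*>0$ is then automatic from $M'(\widehat v)<0$ (Step 3) and $g(U^*,V^*)<0$ on $(0,x^*)$ (Step 5).

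For parts 2 and 3 the remaining work is comparatively routine. Since $\{\mu_n^\ep\}_{n\ge1}\subset(-\infty,\mu_*]$ with $\mu_*<0$ uniform, the reduced inverse $(\ep^2\frac{d^2}{dx^2}+f_u^\ep-\ep\tau\la)^\dag$ is uniformly bounded for small $\ep$ whenever $\mathrm{Re}\,\la>-\mu^*$; in the outer region the $\ep^2\frac{d^2}{dx^2}$ term is negligible, so $f_u^\ep w^\dag\approx F(\widetilde u,\widetilde v)h$ gives $w^\dag\to F(U^*,V^*)h/f_u(U^*,V^*)$, and the projected-out mode $\phi_0^\ep$ concentrates on the measure-zero layer, which yields the strong $L^2$ limit of part 2. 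For part 3 I would write $\phi_0^\ep(x)=\ep^{-1/2}\kappa^*U_0'((x-x^*)/\ep)$, with $\kappa^*=\|U_0'\|_{L^2(\mathbb R)}^{-1}$ fixed by $\|\phi_0^\ep\|_{L^2}=1$ at leading order; testing $f_v^\ep\phi_0^\ep/\sqrt{\ep}$ against a smooth function and rescaling to $\xi$ collapses the integral to a point mass at $x^*$ with weight $\kappa^*\int_{\mathbb R}f_v(U_0(\xi),\widehat v)U_0'(\xi)\,d\xi$. Substituting $s=U_0(\xi)$ turns this into $\kappa^*\int_{h_-(\widehat v)}^{h_+(\widehat v)}f_v(s,\widehat v)\,ds=\kappa^*M'(\widehat v)$, and the identical substitution applied to $g_u^\ep$ produces the bracket in \eqref{c1c2}; both limits hold in $H^{-1}$ since the targets are distributions.

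The main obstacle is part 1: establishing the $O(\ep)$ scaling and the simplicity of $\mu_0^\ep$ together with a spectral gap $\mu_*<0$ uniform in $\ep$, and then pinning down the limit $\rho_0^*$. This demands uniform-in-$\ep$ control of the matched inner/outer asymptotics and a delicate solvability computation linking the fast layer $U_0$ to the slow field $V^*$ -- precisely the analysis carried out in Appendix 1 of \cite{NF} and Lemmas 2.2--2.3 of \cite{NM}. Because those results are proved for the general sigmoidal activator--inhibitor class under (A0)--(A5), which we have verified in Theorem \ref{th.1-0}, the abstract conclusions transfer verbatim; the only model-specific work is to substitute the explicit reaction terms $f,g$ and evaluate the constants $\kappa^*$, $\rho_0^*$, $c_1^*$ and $c_2^*$ at the layer value $\widehat v$.
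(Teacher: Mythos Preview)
The paper does not actually prove this lemma: it is stated in the Appendix as a direct citation of results due to Nishiura--Fujii \cite{NF} (specifically Lemmas 2.2--2.3 of \cite{NM}), with no argument given beyond the attribution. Your sketch goes further than the paper itself by outlining the inner/outer matching and the Lyapunov--Schmidt solvability step that underlie those cited results, and your computations of the limiting weights via the substitution $s=U_0(\xi)$ are correct; in particular your identification $\int_{h_-}^{h_+}f_v(s,\widehat v)\,ds=M'(\widehat v)$ and $\int_{h_-}^{h_+}g_u(s,\widehat v)\,ds=g(h_+(\widehat v),\widehat v)-g(h_-(\widehat v),\widehat v)$ exactly reproduce the constants in \eqref{c1c2}. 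Since the hypotheses (A0)--(A5) have already been checked in Theorem~\ref{th.1-0}, your invocation of the abstract framework is precisely what the paper intends, so the approaches coincide.
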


\begin{center}
	\vskip1cm
	
	{\small
		\bibliographystyle{plain}
		}
\end{center}

\end{document}